\numberwithin{equation}{section} 
\newcommand{\rhs}{f^\text{\tiny inc}}
\DeclareMathOperator\sign{sign}
\title{
Domain decomposition for quasi-periodic scattering by layered media via robust boundary-integral equations at all frequencies
}
\author{Carlos P\'erez-Arancibia\footnote{Department of Mathematics,
    MIT, Cambridge, MA 02139, and Institute for Mathematical and Computational Engineering, School of Engineering and Faculty of Mathematics, Pontificia Universidad Católica de Chile, Santiago, Chile.  Email: cperezar@mit.edu,},
  \, Stephen P. Shipman\footnote{Dept. of Mathematics, Louisiana State
    University, Baton Rouge, LA \ 70803. Email:
    shipman@math.lsu.edu},
  \, Catalin Turc\footnote{Dept. of Math.
    Sciences, New Jersey Inst. of Technology, Newark, NJ 07102. Email: catalin.c.turc@njit.edu},
    \, Stephanos
  Venakides\footnote{Dept. of Mathematics, Duke University, Durham, NC
    \ 27708. Email: ven@math.duke.edu} }
\newtheorem{theorem}{Theorem}[section]
\newtheorem{corollary}[theorem]{Corollary}
\newtheorem{remark}[theorem]{Remark}
\newenvironment{proof}{\hspace{0.5cm} {\bf Proof.}}
{$\quad {}_\blacksquare$\vspace{0.3cm}}
\newcommand{\notesps}[1]{#1}
\date{}
\newcommand{\triple}[1]{{\left\vert\kern-0.25ex\left\vert\kern-0.25ex\left\vert #1 
    \right\vert\kern-0.25ex\right\vert\kern-0.25ex\right\vert}}
\begin{document}
\maketitle
\begin{abstract}
  We develop a non-overlapping domain decomposition method (DDM) for scalar wave scattering by periodic layered media.  Our approach relies on robust boundary-integral equation formulations of Robin-to-Robin (RtR) maps throughout the frequency spectrum, including cutoff (or Wood) frequencies.   We overcome the obstacle of non-convergent quasi-periodic Green functions at  these frequencies by incorporating newly introduced shifted Green functions. Using the latter in the definition of quasi-periodic boundary-integral operators leads to rigorously stable computations of RtR operators. We develop Nystr\"om discretizations of the RtR maps that rely on trigonometric interpolation, singularity resolution, and fast convergent windowed quasi-periodic Green functions. We solve the tridiagonal DDM system via recursive Schur complements and establish rigorously that this procedure is always completed successfully. We present a variety of numerical results concerning Wood frequencies in two and three dimensions as well as large numbers of layers.

  \textbf{Keywords}: Helmholtz transmission problem, domain decomposition, periodic layered media, lattice sum.\\
   
 \textbf{AMS subject classifications}: 
 65N38, 35J05, 65T40,65F08
\end{abstract}

\section{Introduction}
\label{intro}

Simulation of electromagnetic wave propagation in periodic layered media has numerous applications in optics and photonics (photovoltaic devices, computation of plasmons, {\itshape etc.}). The use of periodic structures, such as diffraction gratings, which transmit and reflect waves along a discrete set of propagating directions,  opens up interesting possibilities to guide and direct waves in  unusual ways. Volumetric discretizations (finite-difference (FD)~\cite{taflove2005computational}, finite element (FE)~\cite{jin2015finite}), that constitute the vast majority of numerical methods, require very large numbers of unknowns to suppress their inherent pollution effect, and thus produce very large linear systems requiring good preconditioners, which may not be readily available.  Furthermore, such methods must enforce radiation conditions in infinite domains by means of absorbing boundary conditions (ABC) or perfectly matched layers (PML) \notesps{(see, for example, \cite{Berenger1994,GivoliKeller1990,HanWu1985})}, both of which meet difficulties in the treatment of surface waves and evanescent modes~\cite{johnson2008notes}.

In the technologically relevant case of piecewise constant periodic layered media,  simulation methods based on boundary-integral equations (BIE) and  quasi-periodic Green functions are attractive candidates.  Radiation conditions are enforced automatically, and  discretizations of material interfaces are much smaller than volumetric discretizations and do not suffer from the pollution effect. 
Quasi-periodic Green functions are infinite sums of free-space Green functions with periodically distributed monopole singularities.  \notesps{These double sums converge, although very slowly, for all but a discrete set of ``cutoff" frequencies, for a given quasi-periodicity parameters (Bloch wavevector).  These are cutoff frequencies at which a Rayleigh diffraction mode transitions between propagating and evanescent and the number of propagating directions jumps.  Around these frequencies, the energy is rapidly redistributed along emerging new directions and is associated with anomalous scattering behavior.  These frequencies are often referred to as Wood frequencies (or Wood configurations of wavevector and frequency) because their problematic association in the literature to Wood's anomaly; see the works \cite{HesselOliner1965,NichollsOhJohnson2016,StewartGallaway1962,Wood1935},\cite[Ch.~1]{EnochBonod2012} and references therein for discussions on this phenomenon.}
Popular methods for accelerating the slow convergence at non-Wood frequencies include Ewald summation~\cite{Ewald} and lattice sums~\cite{Linton}.  \notesps{At very high frequencies, asymptotic methods help to accelerate computation; see for example~\cite{KurkcuReitich2009}.}

While the underlying scattering problems are, with regard to the PDE, generically stable at Wood configurations of wavevector and frequency, the latter pose a challenge to BIE for quasi-periodic problems.  In three dimensions, they become increasingly close together at high frequency, and this puts the solution of quasi-periodic problems based on the quasi-periodic Green function out of reach.  For periodic layered media with large numbers of layers, such as thin films used in photovoltaic cells, the probability of encountering Wood frequencies is high.  Another difficulty is the need for an efficient algorithm for the evaluation of quasi-periodic Green functions and their integration into existing fast BIE solvers.  In the solution of the ensuing dense linear systems, the BIE formulations of periodic layered media give rise to tridiagonal solvers, whose structure can be exploited  to lead to efficient direct solvers~\cite{cho2015robust}.

The above challenges faced by BIE-based quasi-periodic solvers were addressed
in two  recent computational methods. Alternative periodization schemes for boundary-integral formulations of quasi-periodic problems that do not rely on the classical quasi-periodic Green functions were proposed in~\cite{cho2015robust,Barnett2011}. These methods have ideas in common with the work presented in~\cite{gumerov2014method} as well as kernel-idependent FMM methods~\cite{YING2004591} and rely on representations of fields as sums of layer potentials and linear combinations of free-space fundamental solutions (radial basis functions)~\cite{gumerov2014method} whereby the quasi-periodicity and radiations conditions are enforced numerically and are not intrinsically satisfied.   This approach gives rise to efficient direct solvers for transmission problems in two-dimensional periodic layered media, and can yield results even at Wood frequencies~\cite{cho2015robust,LaiKobayashiBarnett2015}.  Its rigorous analysis appears to be absent in the literature, 
and we are not aware of evidence that these methods are capable of handling Wood frequencies in three dimensions.  

For the first time in this arena, a rigorous solution was provided to the problem of boundary-integral equation formulations of quasi-periodic problems at Wood frequencies in both two and three dimensions through a new method, in which the well-posedness of the formulation and the stability of the numerical scheme were proven, each in its own right~\cite{bruno2016superalgebraically,Delourme,bruno2017three}.   Smooth windowed truncations of the lattice sums for the Green functions were introduced and analyzed in~\cite{bruno2016superalgebraically}. It was first shown in the same reference~\cite{bruno2016superalgebraically} that the windowed Green functions (WGF) converge to their corresponding quasi-periodic Green functions superalgebraically away from Wood frequency/wavevector configurations as the radius of truncation increases.  The incorporation of WGF in existing fast boundary-integral solvers is relatively straightforward. Remarkably, the WGF method can be adapted to handle scattering problems in layered media whose infinite interfaces are no longer periodic~\cite{bruno2016windowed,bruno2017windowed}. Then, shifted Green functions that converge at and around Wood anomalies were used in a boundary-integral equation setting to provide accurate solutions of scattering problems for perfectly reflecting gratings throughout the frequency spectrum. The shifted Green functions converge algebraically fast at Wood frequencies, and the rate of convergence grows with the number of shifts. However, the shifted Green functions introduce new singularities (poles) in addition to those already present in the quasi-periodic Green functions. Remarkably, these additional singularities turn out to be benign in the case of perfectly reflecting periodic gratings as they can be arranged to be outside the computational domain if one uses indirect formulations~\cite{Delourme,bruno2017three}.

In this article, we extend the shifted Green function method to the case of scalar transmission problems in periodic layered media.  By using a domain decomposition method (DDM), we overcome the difficulty of poles of the shifted Green function inside the computational domain, and we establish the well-posedness of the ensuing system of boundary-integral equations.  \notesps{There is a vast literature on DDM; the reader is referred to the seminal works of B. Despr\'es~\cite{Depres,Despres1991} and the expository books~\cite{DoleanJolivetNataf2015,ToselliWidlund2006}.
DDM is well suited to the Helmholtz/Maxwell equations in periodic layered media because of the robustness of the Robin-to-Robin (RtR) operator for each layer~\cite{schadle2007domain,nicholls2018stable}.}  For a given periodic layer, it maps incoming (interior boundary) Robin data to outgoing (exterior boundary) Robin data on the interfaces that bound that layer.  In this way, Robin data are matched on each interface of material discontinuity.  This procedure produces a tridiagonal system whose unknowns are the Robin data on interfaces, and whose non-zero blocks consist of RtR operators.  If a particular layer has constant material properties, the RtR operators can be computed robustly in terms of boundary-integral operators that use  the ordinary quasi-periodic Green function for frequencies that are not Wood frequencies, and shifted Green functions for wavenumbers that are near or at Wood frequencies. Interestingly, the computations of RtR do not require use of hypersingular boundary-integral operators. We establish rigorously in this work two important~facts.

\begin{enumerate}
\item The computations of RtR maps via boundary-integral operators are robust throughout the frequency spectrum if shifted Green functions are employed at Wood frequencies.
  \item The DDM for solution of scalar transmission problems in periodic layered media with piecewise constant material properties presented in this paper is equivalent to the original PDE, assuming that the PDE problem is well-posed.
  \end{enumerate}

We develop a high-order discretization of the tridiagonal DDM system based  on Nystr\"om discretizations of periodic boundary-integral operators. The latter, in turn, rely on trigonometric interpolation, logarithmic singularity extraction in two dimensions and analytic resolution of singularity in three dimensions, and the windowed Green function method~\cite{Delourme,bruno2017three}. We solve the DDM system using recursive Schur complements to eliminate sequentially the discretized Robin data corresponding to each layer in a top-down sweep, a procedure that leads to a computational cost that is linear in the number of layers. We also present theoretical arguments to explain why the Schur complement elimination procedure can be always completed successfully. The variety of two- and three-dimensional numerical results presented in this paper showcase the capability of our DDM solver to handle large numbers of layers, challenging Wood configurations, and inclusions in a periodic layered medium.  The DDM solvers presented in this paper, being built on quasi-periodic Green functions, must be re-assembled when the quasi-periodic parameter changes. Also, the computations of RtR operators require inversions of boundary-integral operators. In summary, the DDM solvers developed in this paper enjoy the following attractive features.
\begin{itemize}
\item The computations of the RtR maps are stable across the frequency spectrum.
\item The DDM system can be solved via recursive Schur complements, leading to a computational cost and memory usage that are linear in the number of layers; and it can be shown rigorously that this procedure does not break down.
\item The DDM approach, being modular, allows for use of heterogeneous discretizations such as FE and BIE and use of non-conforming discretizations on interfaces pertaining to layers with different material properties.
  \item DDM are easily parallelizable.
  \end{itemize}
 The integration of WGF and shifted Green functions  in existing three-dimensional boundary-integral operator discretizations presented in this contribution is relatively seamless and results in a rigorous treatment of Wood configurations in three dimensions.
DDM approaches will be feasible for the solution of three-dimensional electromagnetic transmission problems in periodic layered media based  on quasi-optimal transmission conditions~\cite{jerez2017multitrace,boubendir2017domain,boubendir2014well} that renders them amenable to Krylov subspace iterative solvers.  \notesps{Quasi-optimal transmission conditions arise from a judicious choice of the complex wavenumber in the transmission operator that gives rise to a DDM whose rate of convergence is practically independent of frequency~\cite{boubendirDDM}.}

\notesps{The RtR DDM that employs a shifted Green-function scheme can handle interfaces between layers that are very general (including those that are not the graph of a function) and general frequencies.
There are of course situations in which other methods would be superior or should be used in combination with the RtR DDM.  In the case of small, smooth perturbations of flat interfaces, the method of variation of boundaries would provide increased acceleration~\cite{BrunoReitich1993}, even if the perturbations are not that small~\cite{NichollsReitich2001b,nicholls2018stable}.}
\notesps{And as noted above, at high frequencies, asymptotic methods should be used to accelerate the computation~\cite{KurkcuReitich2009}.}

The paper is organized as follows. In Section~\ref{MS10} we present the scalar scattering problem in two-dimensional layered media and we review the main results about the well-posedness of these problems.  In Section~\ref{DDM} we present a DDM formulation of the transmission problems that uses matching of classical Robin boundary conditions of the material interfaces, and we present computations of ensuing RtR maps that are shown to be stable throughout the frequency spectrum.  We continue in Section~\ref{nystrom} with a description of the Nystr\"om discretization of the RtR maps and we provide and analyze a recursive Schur complement elimination algorithm for the direct solution of the discrete DDM system.  Finally, we present in Section~\ref{num} a variety of numerical results of wave scattering at mostly Wood frequency configurations in periodic layered media.

\parskip 2pt plus2pt minus1pt

\section{Scalar transmission problems \label{MS10}}
We consider the problem of quasi-periodic scattering by penetrable homogeneous periodic layers. For the sake of simpler notations, we present the two-dimensional case. We mention that all the derivations that we present are easily translatable to three-dimensional configurations. The periodicity of the layers is taken to be in the horizontal $x_1$ direction, that is the layers are given by $\Omega_j=\{(x_1,x_2)\in\mathbb{R}^2: F_{j}(x_1)\leq x_2\leq F_{j-1}(x_1)\}$ for $0<j<N$ and $\Omega_0=\{(x_1,x_2)\in\mathbb{R}^2:F_0(x_1)\leq x_2\}$ and $\Omega_{N+1}=\{(x_1,x_2)\in\mathbb{R}^2:x_2\leq F_N(x_1)\}$, and all the functions $F_j$ are periodic with principal period $d$, that is $F_j(x_1+d)=F_j(x_1)$ for all $0\leq j\leq N$. We assume that the medium occupying the layer $\Omega_j$ is homogeneous and its permitivity is $\varepsilon_j$; the wavenumber $k_j$ in the layer $\Omega_j$ is given by $k_j=\omega\sqrt{\varepsilon_j}$. A plane wave $u^\text{\tiny inc}(\mathbf{x})=\exp(i(\alpha x_1+i\beta x_2))$ where $\alpha^2+\beta^2=k_0^2$ impinges on the layered structure. We seek $\alpha$-quasi-periodic fields $u_j$ (i.e. $u_j(x_1+d,x_2)=e^{i\alpha d}u(x_1,x_2)$ for all $(x_1,x_2)\in\mathbb{R}^2$) that satisfy the following system of equations:
\begin{equation}\label{system_t}
\begin{array}{rclll}
 \Delta u_j +k_j^2 u_j &=&0& {\rm in}& \Omega_j^{per}:=\{(x_1,x_2)\in\Omega_j: 0\leq x_1\leq d\},\smallskip\\
  u_j+\delta_0u^\text{\tiny inc}&=&u_{j+1}&{\rm on}& \Gamma_j=\{(x_1,x_2):0\leq x_1\leq d,\ x_2=F_j(x_1)\},\smallskip\\
  \gamma_j(\partial_{n_j}u_j+\delta_0\partial_{n_j}u^\text{\tiny inc})&=&-\gamma_{j+1}\partial_{n_{j+1}}u_{j+1}&{\rm on}& \Gamma_j,\smallskip
\end{array}\end{equation}
where $\delta_0$ is the Dirac distribution supported on $\Gamma_0$ and $n_j$ denote the unit normals to the boundary $\partial\Omega_j$ pointing to the exterior of the subdomain $\Omega_j$. Note that we assigned to the partial derivatives on a given interface the index of the domain on whose side the partial derivative is taken; thus, on the interface $\Gamma_j$ we have $n_j=-n_{j+1}$. We also assume that $u_0$ and $u_{N}$ in equations~\eqref{system_t} are radiative in $\Omega_0$ and $\Omega_{N+1}$ respectively. The latter requirement amounts to expressing the solutions $u_0$ and $u_{N+1}$ in terms of Rayleigh series
\begin{equation}\label{eq:rad_up}
  u_0(x_1,x_2)=\sum_{r\in\mathbb{Z}} C_r^{+}e^{i\alpha_r x_1+i\beta_{0,r} x_2},\quad x_2>\max{F_0}
\end{equation}
and
\begin{equation}\label{eq:rad_down}
  u_{N+1}(x_1,x_2)=\sum_{r\in\mathbb{Z}} C_r^{-}e^{i\alpha_r x_1-i\beta_{N+1,r} x_2},\quad x_2<\min{F_N}
\end{equation}
in which $\alpha_r=\alpha+\frac{2\pi}{d}r$ and $\beta_{0,r}=(k_0^2-\alpha_r^2)^{1/2}$ and $\beta_{N+1,r}^2=(k_{N+1}^2-\alpha_r^2)^{1/2}$, where the square is root chosen such that $\sqrt{1}=1$ with branch cut along the negative imaginary axis. We assume that the wavenumbers $k_j$ and the quantities $\gamma_j$ in the subdomains $\Omega_j$ are positive real numbers.

Wood frequencies are those values of $k$ for which there exist indices $r_0$ such that $\alpha_{r_0}^2=k^2$. The well posedness of the equations~\eqref{system_t} was established in~\cite{Petit} in the case of two domains $\Omega_0$ and $\Omega_1$ separated by the periodic interface $\Gamma_0$ with $\gamma_0=\gamma_1=1$ and any real wavenumbers $k_0$ and~$k_1$, including Wood frequencies. The techniques presented in~\cite{Petit} are easily applicable to periodic configurations with arbitrary number of layers.  To the best of our knowledge, any attempt at establishing uniqueness of solutions of equations~\eqref{system_t} was based on the aforementioned techniques. However, certain  requirements~\cite{arens2010scattering} must be imposed on the material parameters $(k_j,\gamma_j), 0\leq j\leq N$ in order to establish rigorously the uniqueness of solutions of equations~\eqref{system_t} using those techniques. For the sake of completeness, we provide in Appendix~\ref{wp_proof} a proof of uniqueness of solutions of equations~\eqref{system_t} under the assumption of monotonicity of the wavenumber $k_j,0\leq j\leq N+1$ and $\gamma_j=1,0\leq j\leq N+1$. In general, for a fixed periodic layered configuration with material properties $\varepsilon_j$, the transmission problem~\eqref{system_t} has a unique solution with the exception of a discrete set of frequencies $\omega$ whose only accumulation point is infinity~\cite{arens2010scattering,bonnet1994guided,DobsonFriedman1992}. The same comprehensive reference~\cite{arens2010scattering} contains a proof of existence of solutions for the transmission problem~\eqref{system_t} using both variational and boundary-integral equation arguments.

\begin{figure}
\centering
\includegraphics[scale=1]{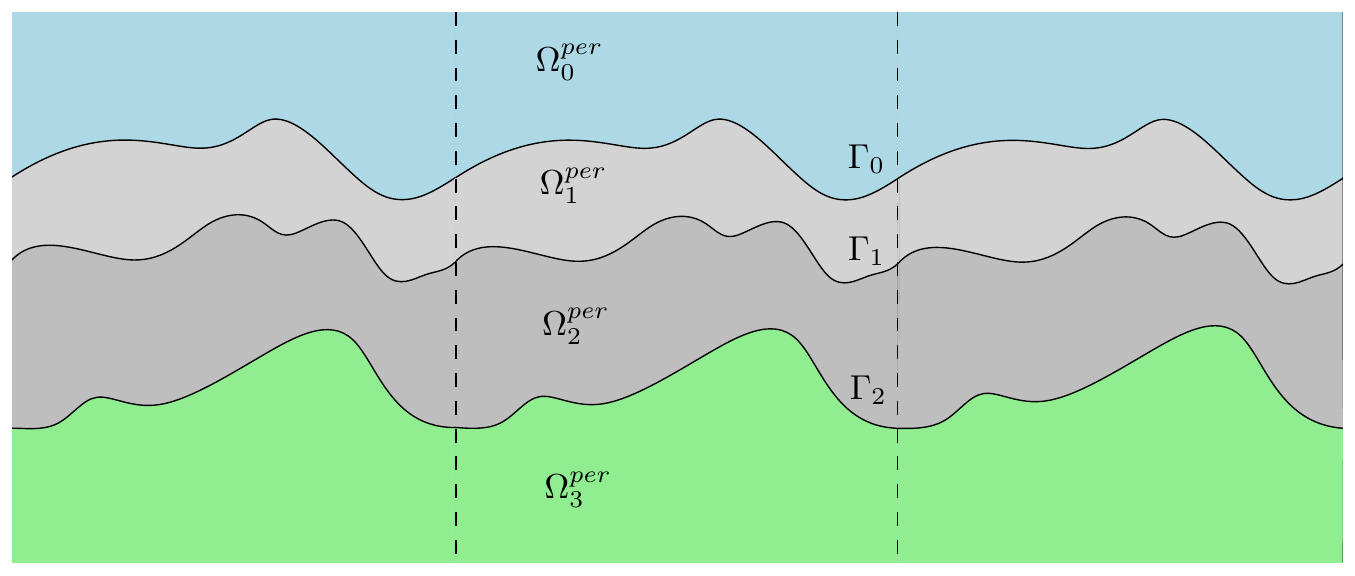}
\caption{Typical periodic layer structure with $N=2$; the $x_1$-axis is horizontal, and the $x_2$-axis is vertical.}
\label{fig:subdiv1}
\end{figure}

 \section{Domain decomposition approach\label{DDM}}

 We present a domain decomposition method (DDM) based on boundary-integral equations (BIEs) for the numerical solution of transmission problems~\eqref{system_t}. Just like BIE formulations, DDM formulations recast the original PDEs in terms of unknown quantities defined on the interfaces of material discontinuity. A non-overlapping domain decomposition approach for the solution of equations~\eqref{system_t} consists of solving Helmholtz subdomain problems in $\Omega_j,j=0,\ldots,N+1$ with matching Robin transmission boundary conditions on the common subdomain interfaces $\Gamma_j$ for $j=0,\ldots,N$. The main motivation for using DDM is the  seamless treatment of periodic configurations at Wood frequencies via BIE formulations, as well the ease with which it can handle inclusions in the periodic layers.  Specifically, DDM amount to computing $\alpha$-quasi-periodic  subdomain solutions:
\begin{eqnarray}\label{DDM_t}
  \Delta u_j +k_j^2 u_j &=&0\qquad {\rm in}\quad \Omega_j^{per},\\
  \gamma_0(\partial_{n_0}u_0+\partial_{n_0}u^\text{\tiny inc})-i\eta(u_0+u^\text{\tiny inc})&=&-\gamma_{1}\partial_{n_{1}}u_{1}-i\eta\ u_{1}\quad{\rm on}\quad \Gamma_{0}\nonumber\\
  \gamma_{1}\partial_{n_{1}}u_{1}-i\eta\ u_{1}&=&-\gamma_0(\partial_{n_0}u_0+\partial_{n_0}u^\text{\tiny inc})-i\eta(u_0+u^\text{\tiny inc})\quad{\rm on}\quad \Gamma_{0}\nonumber\\
  \gamma_j\partial_{n_j}u_j-i\eta\ u_j&=&-\gamma_{j+1}\partial_{n_{j+1}}u_{j+1}-i\eta\ u_{j+1}\quad{\rm on}\quad \Gamma_{j},\ 1\leq j\leq N\nonumber\\
  \gamma_{j+1}\partial_{n_{j+1}}u_{j+1}-i\eta\ u_{j+1}&=&-\gamma_j \partial_{n_j}u_j -i\eta\ u_j\quad{\rm on}\quad \Gamma_{j}, 1\leq j\leq N.\nonumber
\end{eqnarray}
In addition, we require that $u_0$ and $u_{N+1}$ be radiative and that $\eta>0$. The latter requirement ensures that the Robin problems in the semi-infinite domains $\Omega_0$ and $\Omega_{N+1}$ are well posed; see Theorem~\ref{wp_Omega_0}.

 The essence of the domain decomposition~\eqref{DDM_t}  is solving a Robin boundary-value problem in each layer subdomain and connecting the Robin boundary data across interfaces via the so-called Robin-to-Robin (RtR) maps~\cite{collino2000domain}---also see below. For a given layer subdomain $\Omega_j$ with $1\leq j\leq N$ we seek $w_j$ $\alpha$-quasi-periodic solutions of the following Helmholtz boundary-value problem
\begin{eqnarray}\label{eq:H}
  \Delta w_j+k_j^2w_j&=&0\quad{\rm in}\ \Omega_j^{per}\\
  \gamma_j\partial_{n_j}w_j-i\eta\ w_j&=&g_{j-1,j}\quad{\rm on}\ \Gamma_{j-1}\nonumber\\
  \gamma_j\partial_{n_j}w_j-i\eta\ w_j&=&g_{j,j}\quad{\rm on}\ \Gamma_{j}\nonumber
  \end{eqnarray}
where $g_{j-1,j}$ and $g_{j,j}$ are generic $\alpha$-quasi-periodic functions defined on $\Gamma_{j-1}$ and $\Gamma_j$. The RtR map $\mathcal{S}^j$ is defined as\begin{equation}\label{RtRboxj_t}
   \mathcal{S}^j\begin{bmatrix}g_{j-1,j}\\g_{j,j}\end{bmatrix}=\begin{bmatrix}(\gamma_j\partial_{n_j}w_j+i\eta\ w_j)|_{\Gamma_{j-1}}\\(\gamma_j\partial_{n_j}w_j+i\eta\ w_j)|_{\Gamma_{j}}\end{bmatrix}.
 \end{equation}
The computation of the RtR maps $\mathcal{S}^j$ requires solving the Helmholtz boundary value problem~\eqref{eq:H}. Of the two indices of the boundary data $g$ in equations~\eqref{eq:H}, the first index is the index of the interface and the second index is the index of the subdomain.  Thus, $g_{j-1,j}$ refers to boundary data on the interface $\Gamma_{j-1}$ on the side of the subdomain $\Omega_j$. The block structure of the RtR operators $\mathcal{S}^j$ defined in equation~\eqref{RtRboxj_t} is 
\begin{equation}\label{eq:block}
   \mathcal{S}^j\begin{bmatrix}g_{j-1,j}\\g_{j,j}\end{bmatrix}=\begin{bmatrix}\mathcal{S}^j_{j-1,j-1} & \mathcal{S}^j_{j-1,j}\\ \mathcal{S}^j_{j,j-1} & \mathcal{S}^j_{j,j}\end{bmatrix}\begin{bmatrix}g_{j-1,j}\\g_{j,j}\end{bmatrix}.
 \end{equation}
For the semi-infinite subdomain $\Omega_0$ $w_0$ is the $\alpha$-quasi-periodic outgoing solution of the Helmholtz boundary value problem
\begin{eqnarray}\label{eq:H0}
  \Delta w_0+k_0^2w_0&=&0\quad{\rm in}\ \Omega_0^{per}\\
  \gamma_0\partial_{n_0}w_0-i\eta\ w_0 &=&g_{0,0}\quad{\rm on}\ \Gamma_0\,,\nonumber
\end{eqnarray}
in which $g_{0,0}$ is a $\alpha$-quasi-periodic function defined on $\Gamma_0$, and we define the RtR map $\mathcal{S}^0$ by
\begin{equation}\label{eq:S0}
  \mathcal{S}^0g_{0,0}:=(\gamma_0\partial_{n_0}w_0+i\eta\ w_0)|_{\Gamma_0}.
\end{equation}
The RtR map $\mathcal{S}^{N+1}$ corresponding to the semi-infinite subdomain $\Omega_{N+1}$ is defined in a similar manner to $\mathcal{S}^0$  but  for boundary data $g_{N-1,N}$ defined on $\Gamma_N$. 

In DDM formulations~\eqref{DDM_t}, the unknown Robin data associated with each interface $\Gamma_j$
\begin{eqnarray*}
  f_{j}=\begin{bmatrix}f_{j,j}\\ f_{j,j+1}\end{bmatrix}&:=&\begin{bmatrix}(\gamma_j\partial_{n_j}u_j-i\eta\ u_j)|_{\Gamma_{j}}\\ (\gamma_{j+1}\partial_{n_{j+1}}u_{j+1}-i\eta\ u_{j+1})|_{\Gamma_j}\end{bmatrix},\ 0\leq j\leq N\\
\end{eqnarray*}
are matched via the subdomain RtR maps $\mathcal{S}^j,0\leq j\leq N+1$ giving rise to a $(2N+2)\times (2N+2)$ operator linear system.  The unknown Robin data $f=[f_0\ f_1\ \ldots f_{N}]^\top$ are the solution of the following linear system
\begin{equation}\label{ddm_t_exp}
  \mathcal{A}f={\rhs}\,,
\end{equation}
in which the DDM matrix $\mathcal{A}$ is a tridiagonal block matrix whose first two rows, and the rows indexed by $2j+1$ and $2j+2$ (corresponding to the unknown Robin data $f_{j,j}$ and $f_{j,j+1}$), and the last two rows, are given in explicit form
\begin{equation}\label{eq:mA}
  \mathcal{A}=\begin{bmatrix}
  I & \mathcal{S}^1_{0,0}& \mathcal{S}^1_{0,1} & \ldots & 0 & 0 & 0 & 0 & 0 &0 & \ldots & 0 & 0 \\
  \mathcal{S}^0 & I & 0 & \ldots & 0 & 0 & 0 & 0 & 0 & 0 & \ldots & 0 & 0\\
  \ldots & \ldots & \ldots & \ldots & \ldots & \ldots & \ldots & \ldots & \ldots & \ldots & \ldots & \ldots & \ldots\\
  0 & 0 & 0 & \ldots & 0 & 0 & I & \mathcal{S}^{j+1}_{j,j} & \mathcal{S}^{j+1}_{j,j+1} & 0 & \ldots  & 0 & 0\\
  0 & 0 & 0 & \ldots & 0 & \mathcal{S}^j_{j,j-1} & \mathcal{S}^j_{j,j} & I & 0 & 0 & \ldots & 0 & 0\\
  \ldots & \ldots & \ldots & \ldots & \ldots & \ldots & \ldots & \ldots & \ldots & \ldots & \ldots & \ldots & \ldots\\
  \ldots & \ldots & \ldots & \ldots & \ldots & \ldots & \ldots & \ldots & \ldots & \ldots & \ldots & I & \mathcal{S}^{N+1}\\
 \ldots & \ldots & \ldots & \ldots & \ldots & \ldots & \ldots & \ldots & \ldots & \ldots & \mathcal{S}^{N}_{N,N-1} & \mathcal{S}^{N}_{N,N} & I\\
  \end{bmatrix}\nonumber
  \end{equation}
 and in which the right-hand-side vector ${\rhs}=[\rhs_0\ \rhs_1\ \ldots\ \rhs_{N}]^\top$ has zero components $\rhs_\ell=[0\ 0]^\top,\ 1\leq \ell\leq N$, with the exception of the first component 
 \begin{equation}\label{rhs_ddm_t}
   \rhs_0=\begin{bmatrix}-(\gamma_0\partial_{n_0}u^\text{\tiny inc}-i\eta\ u^\text{\tiny inc})|_{\Gamma_0}\\-(\gamma_0\partial_{n_0}u^\text{\tiny inc}+i\eta\ u^\text{\tiny inc})|_{\Gamma_0}\end{bmatrix}\nonumber.
 \end{equation} 
 In what follows, we study spectral properties of the RtR operators $\mathcal{S}^j,0\leq j\leq N+1$, properties that will shed light onto the solvability of the DDM system~\eqref{ddm_t_exp}.

 \subsection{Spectral properties of the RtR operators~\label{S0}}

 The first question that arises is whether the RtR operators are properly defined under the assumptions on wavenumbers $k_j$ and coefficients $\gamma_j>0$, $j=0,\ldots,N+1$. We establish the following result, whose proof is essentially a simple extension of arguments presented in~\cite{Petit}.
 \begin{theorem}\label{wp_Omega_0}
   Let $w_0$ be the $\alpha$-quasi-periodic outgoing solution of the following Helmholtz equation 
\begin{eqnarray*}
  \Delta w_0+k_0^2w_0&=&0\quad{\rm in}\ \Omega_0^{per}\\
  \partial_{n_0}w_0-i\eta\ \gamma_0^{-1}w_0 &=&0\quad{\rm on}\ \Gamma_0.\nonumber
\end{eqnarray*}
Then $w_0$ is identically zero in $\Omega_0^{per}$.
 \end{theorem}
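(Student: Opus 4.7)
The plan is to prove uniqueness by a standard energy (Green's identity) argument carried out on a finite truncation of $\Omega_0^{per}$, exploiting quasi-periodicity on the vertical sides, the homogeneous Robin condition on $\Gamma_0$, and the Rayleigh expansion on the top.

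First, I would fix $R>\max F_0$ and work in the truncated period cell $\Omega_0^{per,R} = \{(x_1,x_2)\in\Omega_0^{per}:x_2<R\}$. Applying Green's first identity to $w_0$ and $\overline{w_0}$ and using $\Delta w_0 = -k_0^2 w_0$ gives
\begin{equation*}
\int_{\Omega_0^{per,R}}\!\bigl(|\nabla w_0|^2 - k_0^2 |w_0|^2\bigr)\,dx
\;=\; \int_{\partial\Omega_0^{per,R}} \overline{w_0}\,\partial_\nu w_0\,ds,
\end{equation*}
where $\nu$ is the outward unit normal to $\Omega_0^{per,R}$. The left-hand side is real, so taking imaginary parts on both sides will yield an identity involving only boundary data. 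The contributions on the two vertical sides $\{x_1=0\}$ and $\{x_1=d\}$ cancel because $w_0$ and $\partial_{x_1} w_0$ are $\alpha$-quasi-periodic while the outward normals on those two sides are opposite in sign.

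Next, I would analyze the two remaining pieces. On $\Gamma_0$, the outward normal to $\Omega_0^{per,R}$ coincides with $n_0$, and the homogeneous Robin condition $\partial_{n_0}w_0 = i\eta\gamma_0^{-1} w_0$ yields $\int_{\Gamma_0}\overline{w_0}\partial_{n_0}w_0\,ds = i\eta\gamma_0^{-1}\!\int_{\Gamma_0}|w_0|^2\,ds$, whose imaginary part equals $\eta\gamma_0^{-1}\!\int_{\Gamma_0}|w_0|^2\,ds \geq 0$. On the top segment $\{x_2=R\}$, I would insert the Rayleigh expansion~\eqref{eq:rad_up} and invoke the orthogonality $\int_0^d e^{i(\alpha_r-\alpha_s)x_1}\,dx_1 = d\,\delta_{rs}$, obtaining
\begin{equation*}
\int_0^d \overline{w_0}\,\partial_{x_2}w_0\,dx_1
\;=\; d\sum_{r\in\mathbb{Z}} i\beta_{0,r}\,|C_r^+|^2\, e^{-2\,\mathrm{Im}(\beta_{0,r})R}.
\end{equation*}
Because of the branch choice for the square root, $\beta_{0,r}$ is either real positive (propagating), zero (Wood, finitely many), or positive imaginary (evanescent); the imaginary part of $i\beta_{0,r}e^{-2\mathrm{Im}(\beta_{0,r})R}$ equals $\beta_{0,r}\geq 0$ for propagating modes and vanishes otherwise. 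Hence the imaginary part of the top integral is $d\sum_{\beta_{0,r}\in\mathbb{R}_{>0}}\beta_{0,r}|C_r^+|^2 \geq 0$.

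Putting the pieces together, the identity reduces to a sum of non-negative quantities that vanishes, so each term vanishes. In particular $w_0\equiv 0$ on $\Gamma_0$, and combined with the Robin condition this forces $\partial_{n_0}w_0 \equiv 0$ on $\Gamma_0$ as well. Finally, invoking the unique continuation principle for the Helmholtz operator (with constant coefficients in $\Omega_0^{per}$), these vanishing Cauchy data on $\Gamma_0$ propagate to give $w_0\equiv 0$ throughout $\Omega_0^{per}$. The main subtlety is the bookkeeping of signs of normals and the correct handling of the Rayleigh series (in particular, confirming that Wood modes $\beta_{0,r_0}=0$ contribute nothing to the imaginary part and hence are not constrained by this argument, which is fine since they will be killed subsequently by the unique continuation step).
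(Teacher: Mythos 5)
Your proof is correct and follows essentially the same route as the paper: Green's identity on a truncated period cell, cancellation on the vertical sides by quasi-periodicity, taking imaginary parts to isolate the non-negative Robin and propagating-mode contributions, and concluding via Holmgren/unique continuation from vanishing Cauchy data on $\Gamma_0$. Your explicit tracking of the evanescent-mode factor $e^{-2\,\mathrm{Im}(\beta_{0,r})R}$ is in fact slightly more careful than the paper's display, but the argument is the same.
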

 \begin{remark} A similar uniqueness result holds for the homogeneous problem
 \begin{eqnarray*}
  \Delta w_{N+1}+k_{N+1}^2w_{N+1}&=&0\quad{\rm in}\ \Omega_{N+1}^{per}\\
  \partial_{n_{N+1}}w_{N+1}-i\eta\ \gamma_{N+1}^{-1}w_{N+1} &=&0\quad{\rm on}\ \Gamma_N.\nonumber
\end{eqnarray*}
 \end{remark}
 \begin{proof}
   Consider again $h>\max{F_0}$ and the domain $\Omega_{0,h}^{per}:=\{(x_1,x_2)\in\Omega_0^{per}: F_0(x_1)\leq x_2\leq h\}$. A simple application of Green's identities leads to 
\begin{eqnarray*}
  \int_{\Omega_{0,h}^{per}}(|\nabla w_0|^2-k_0^2|w_0|^2)dx&=&\int_{\Gamma_0}\partial_{n_0}w_0\ \overline{w_0}\ ds + \int_{\Gamma_{0,h}}\partial_{x_2}w_0\ \overline{w_0}\ dx_1\\
  &=&i\eta\gamma_0^{-1}\int_{\Gamma_0}|w_0|^2\ ds + \int_{\Gamma_{0,h}}\partial_{x_2}w_0\ \overline{w_0}\ dx_1
\end{eqnarray*}
where $\Gamma_{0,h}:=\{(x_1,x_2): 0\leq x_1\leq d,\ x_2=h\}$. Taking into account the fact that $w_0$ is radiating, we can express $w_0$ on the line segment $\Gamma_{0,h}$ in terms of the following Rayleigh series
\[
w_0(x_1,h)=\sum_{r\in\mathbb{Z}} C_r^{+}e^{i\alpha_r x_1+i\beta_{0,r} h}\,,
\]
from which it follows that
\[
\int_{\Gamma_{0,h}}\partial_{x_2}w_0\ \overline{w_0}\ dx_1=id\sum_{r\in\mathbb{Z},\ \beta_{0,r}>0}\beta_{0,r}|C_r^{+}|^2.
\]
Consequently,
\[
\int_{\Omega_{0,h}^{per}}(|\nabla w_0|^2-k_0^2|w_0|^2)dx=i\eta\gamma_0^{-1}\int_{\Gamma_0}|w_0|^2\ ds+id\sum_{r\in\mathbb{Z},\ \beta_{0,r}>0}\beta_{0,r}|C_r^{+}|^2.
\]
\notesps{The left-hand-side of this identity is real, whereas the right-hand side is a sum of non-negative imaginary terms, and thus each of these terms vanishes.}  This implies that $w_0=0$ on $\Gamma_0$, and thus $\partial_{n_0}w_0=0$ on $\Gamma_0$ as well. The result now follows from Holmgren's uniqueness theorem~\cite{Friedman2008}.
 \end{proof}

Consider now the following Helmholtz equation.   Let  $w_0$ be the $\alpha$-quasi-periodic outgoing solution of
\begin{eqnarray*}
  \Delta w_0+k_0^2w_0&=&0\quad{\rm in}\ \Omega_0^{per}\\
  \partial_{n_0}w_0-i\eta\ \gamma_0^{-1}w_0 &=&g_0\quad{\rm on}\ \Gamma_0
\end{eqnarray*}
where $g_0$ is a $\alpha$-quasi-periodic function defined on $\Gamma_0$. The matter of existence of such a solution will be settled in the next section through boundary-integral equation arguments. We are interested in estimating the norm of the RtR operator $\mathcal{S}^0$ as a continuous operator from $L^2_{per}(\Gamma_0)$ to itself.  We have
\[
\|g_0\|_2^2=\int_{\Gamma_0}(|\partial_{n_0}w_0|^2+\eta^2\gamma_0^{-2}|w_0|^2)ds-2\eta\gamma_0^{-1}\Im{\int_{\Gamma_0}\partial_{n_0}w_0\ \overline{w_0}\ ds}
\]
and
\[
\|\mathcal{S}^0g_0\|_2^2=\int_{\Gamma_0}(|\partial_{n_0}w_0|^2+\eta^2\gamma_0^{-2}|w_0|^2)ds+2\eta\gamma_0^{-1}\Im{\int_{\Gamma_0}\partial_{n_0}w_0\ \overline{w_0}\ ds}.
\]
Again, we have that
\[
\int_{\Omega_{0,h}^{per}}(|\nabla w_0|^2-k_0^2|w_0|^2)dx=\int_{\Gamma_0}\partial_{n_0}w_0\ \overline{w_0}\ ds + \int_{\Gamma_{0,h}}\partial_{x_2}w_0\ \overline{w_0}\ dx_1\,.
\]
Assuming the Rayleigh series expansion
\[
w_0(x_1,h)=\sum_{r\in\mathbb{Z}} C_r^{+}e^{i\alpha_r x_1+i\beta_{0,r} h}\,,
\]
we derive
\[
\Im{\int_{\Gamma_0}\partial_{n_0}w_0\ \overline{w_0}\ ds}=-d\sum_{r\in\mathbb{Z},\ \beta_{0,r}>0}\beta_{0,r}|C_r^{+}|^2\,,
\]
and hence
\[
\|\mathcal{S}^0g_0\|_2^2<\|g_0\|^2_2
\]
for all $g_0$ is a $\alpha$-quasi-periodic function defined on $\Gamma_0$.  It follows that $\|\mathcal{S}^0\|_{L^2_{per}(\Gamma_0)\to L^2_{per}(\Gamma_0)}\leq 1$. Similar arguments lead to the estimate $\|\mathcal{S}^{N+1}\|_{L^2_{per}(\Gamma_{N+1})\to L^2_{per}(\Gamma_{N+1})}\leq 1$. Green's identities establish the following theorem.

\begin{theorem}
  The RtR operators $\mathcal{S}^j$ are unitary in the space $L^2_{per}(\Gamma_j)\times L^2_{per}(\Gamma_{j+1})$ for all $j : 1\leq j\leq N$.
\end{theorem}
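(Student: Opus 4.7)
The plan is to combine a Green's identity computation with an invertibility argument. First I would set up the two squared norms. For an interior layer $1 \leq j \leq N$, writing $g = (g_{j-1,j}, g_{j,j})^\top$ and using the algebraic identity $|\gamma_j \partial_{n_j} w_j \mp i\eta w_j|^2 = \gamma_j^2|\partial_{n_j}w_j|^2 + \eta^2 |w_j|^2 \mp 2\eta\gamma_j\,\Im(\partial_{n_j}w_j\,\overline{w_j})$ on both $\Gamma_{j-1}$ and $\Gamma_j$ gives
\[
\|\mathcal{S}^j g\|^2 - \|g\|^2 \;=\; 4\eta\gamma_j\,\Im\!\int_{\Gamma_{j-1}\cup\Gamma_j} \partial_{n_j} w_j\,\overline{w_j}\,ds,
\]
in complete parallel with the computation already performed for $\mathcal{S}^0$.

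Next I would apply Green's first identity to the \emph{bounded} periodic cell $\Omega_j^{per}$. Its boundary consists of $\Gamma_{j-1}$, $\Gamma_j$, and the two vertical sides $\{x_1=0\}$ and $\{x_1=d\}$; on the vertical sides the outward normals are opposite and $w_j$, $\partial_{x_1}w_j$ are $\alpha$-quasi-periodic, so the products $\partial_n w_j\,\overline{w_j}$ at $x_1=0$ and $x_1=d$ carry opposite signs and cancel. Therefore
\[
\int_{\Omega_j^{per}}\bigl(|\nabla w_j|^2 - k_j^2|w_j|^2\bigr)\,dx \;=\; \int_{\Gamma_{j-1}\cup\Gamma_j}\partial_{n_j}w_j\,\overline{w_j}\,ds.
\]
The left-hand side is real because $k_j\in\mathbb{R}$, so the imaginary part of the right-hand side vanishes, and the identity above yields $\|\mathcal{S}^j g\| = \|g\|$. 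This establishes that $\mathcal{S}^j$ is an isometry; it is also injective, since $\mathcal{S}^j g = 0$ forces $g=0$.

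The step I expect to be the main obstacle is surjectivity, which is needed to upgrade from isometry to \emph{unitary}. Given outgoing Robin data $h=(h_{j-1},h_j)$, the preimage under $\mathcal{S}^j$ is obtained by solving the ``sign-reversed'' Helmholtz--Robin boundary-value problem in $\Omega_j^{per}$ with condition $\gamma_j\partial_{n_j}w_j + i\eta w_j = h$ on $\Gamma_{j-1}\cup\Gamma_j$ and then setting $g = \gamma_j\partial_{n_j}w_j - i\eta w_j$. Uniqueness of this problem follows from exactly the Green's-identity argument above: the imaginary part of the boundary integral now comes from the Robin condition with the opposite sign, which, combined with the real-valued LHS, forces $w_j=0$ on $\Gamma_{j-1}\cup\Gamma_j$, and Holmgren's theorem then yields $w_j\equiv 0$. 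Existence on the bounded periodic cell follows by the Fredholm alternative (or, equivalently, by a standard direct boundary-integral formulation, which the remainder of the paper will supply); alternatively, one can invoke the conjugation symmetry that $\overline{w_j}$ solves the Helmholtz equation as a $(-\alpha)$-quasi-periodic field and thereby build an explicit left inverse of $\mathcal{S}^j$. Either route produces a two-sided inverse, so that $\mathcal{S}^j$ is a bijective isometry on $L^2_{per}(\Gamma_{j-1})\times L^2_{per}(\Gamma_j)$, hence unitary.
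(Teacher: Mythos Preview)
Your proposal is correct and follows exactly the Green's-identity route the paper indicates (the paper's own proof is just the one-line remark ``Green's identities establish the following theorem''); you have simply supplied the details, including the surjectivity step via well-posedness of the sign-reversed Robin problem that the paper leaves implicit.
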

This unitarity can be used to establish the pointwise convergence of the Jacobi fixed-point iterations for the solution of the DDM formulation~\eqref{ddm_t_exp} by a relatively straightforward adaptation of the arguments presented in~\cite{collino2000domain} to the quasi-periodic setting.

\subsection{Calculations of RtR operators in terms of boundary-integral operators associated with quasi-periodic Green functions~\label{rtr}}
Implementation of DDM requires computation of RtR maps. We present in this section explicit representations of RtR maps in terms of boundary-integral operators associated with quasi-periodic Green functions that will serve as the basis of the implementation of the DDM algorithm. 
\subsubsection{Quasi-periodic Green functions, layer potentials and integral operators}
 For a given free-space wavenumber (normalized frequency) $k$, define the $\alpha$-quasi-periodic Green function
\begin{equation}\label{eq:qper_G} 
  G^q_k(x_,x_2)=\sum_{n\in\mathbb{Z}} e^{-i\alpha nd}G_k(x_1+nd,x_2)
\end{equation}
where $G_k(x_1,x_2)=\frac{i}{4}H_0^{(1)}(k|\mathbf{x}|),\ \mathbf{x}=(x_1,x_2)$.   Define $\alpha_r:=\alpha+\frac{2\pi}{d}r$ and $\beta_{r}=\beta_r(k):=(k^2-\alpha_r^2)^{1/2}$, where the branch of the square roots in the definition of $\beta_{r}$ is chosen in such a way that $\sqrt{1}=1$, and that the branch cut coincides with the negative imaginary axis. It can be shown that the series in the definition of the Green function $G^q_k$ in equation~\eqref{eq:qper_G} converge for wavenumbers $k$ for which none of the coefficients $\beta_r$ is equal to zero. In such cases it can be shown that $G^q_k$ can be expressed in the frequency domain in the form
\begin{equation}\label{eq:qper_G_freq}
  G^q_k(x_,x_2)=\frac{i}{2d}\sum_{r\in\mathbb{Z}}\frac{e^{i\alpha_r x_1+i\beta_r|x_2|}}{\beta_r}.
  \end{equation}

When the wavenumber $k$ is a Wood frequency, the set $W=W(k):=\{r_0\in\mathbb{Z}:\beta_{r_0}(k)=0\}$ is nonempty.
For wavenumbers that are Wood frequencies, the series in the definition of the Green function $G^q_k$ in equation~\eqref{eq:qper_G} does not converge. In the case when $k$ is a Wood frequency, we introduce the following shifted Green functions~\cite{Delourme}
\begin{equation}\label{eq:qper_G_shift}
  G^{q,j}_{k,h}(x_,x_2)=\sum_{n\in\mathbb{Z}} e^{-i\alpha nd}\sum_{\ell=0}^j(-1)^\ell \binom{j}{\ell}G_k(x_1+nd,x_2+\ell h)+\sum_{r\in W} c_r e^{i\alpha_r x_1+i(\sign{h})\beta_r x_2}
\end{equation}
for shifts $h\neq 0$, integers $j>0$, and non-zero coefficients $c_r\in\mathbb{C}$. The functions $G^{q,j}_{k,h}$ are radiating $\alpha$-quasi-periodic Green function in the halfplane $x_2>0$ for $h>0$ and respectively in the halfplane $x_2<0$ for $h<0$; these functions have poles at $x_1=0$ and $x_2=-\ell\ h, 0<\ell\leq j$. We note that the quantities $G^{q,j}_{k,h}$ defined in equation~\eqref{eq:qper_G_shift} still make sense when $k$ is not a Wood frequency, in which case the set $W$ can be defined as $W:=\{r_0\in\mathbb{Z}:|\beta_{r_0}|<\varepsilon\}$, where $\varepsilon$ is chosen to be sufficiently small; obviously, the set $W$ can be empty in some cases.

Assume now that the interface $\Gamma^{per}$ is defined as $\Gamma^{per}:=\{(x_1,F(x_1)):0\leq x_1\leq d\}$ where $F$ is a $C^2$ periodic function of principal period equal to $d$. Given a density $\varphi$ defined on $\Gamma^{per}$ (which can be extended by $\alpha$-quasi-periodicity to arguments $(x_1,F(x_1)), x_1\in\mathbb{R}$) we define the single-layer potentials corresponding to a wavenumber $k$
\begin{equation}\label{eq:single_layer}
  [SL_k^q\varphi](\mathbf{x}):=\int_{\Gamma^{per}}G^q_k(\mathbf{x},\mathbf{y})\varphi(\mathbf{y})ds(\mathbf{y})\qquad [SL_{k,h}^{q,j}\varphi](\mathbf{x}):=\int_{\Gamma^{per}}G^{q,j}_{k,h}(\mathbf{x},\mathbf{y})\varphi(\mathbf{y})ds(\mathbf{y})
\end{equation}
for $\mathbf{x}\notin\Gamma^{per}$ and $\mathbf{x}=(x_1,x_2)$ such that $0\leq x_1\leq d$.  The quantities $SL_k^q\varphi$ can be extended by $\alpha$-quasi-periodicity to define $\alpha$-quasi-periodic outgoing solutions of the Helmholtz equation corresponding to wavenumber $k$ in the domains $\{\mathbf{x}:x_2>F(x_1)\}$ and $\{\mathbf{x}:x_2<F(x_1)\}$. Similarly, the quantities $SL_{k,h}^{q,j}\varphi$ can be extended by $\alpha$-quasi-periodicity to define $\alpha$-quasi-periodic outgoing solutions of the Helmholtz equation corresponding to wavenumber $k$ in the domains $\{\mathbf{x}:x_2>F(x_1)\}$ for $h>0$ and respectively in the domain $\{\mathbf{x}:x_2<F(x_1)\}$ for $h<0$. Assuming that $\mathbf{n}$ is the unit normal to $\Gamma^{per}$ pointing into the domain $\{\mathbf{x}:x_2>F(x_1)\}$ one obtains the single layer potential on the interface $\Gamma^{per}$,
\begin{equation}\label{eq:SL}
  [S_k^q(\varphi)](\mathbf{x}) := \lim_{\varepsilon\to 0} [SL_k^q\varphi](\mathbf{x}\pm \varepsilon\mathbf{n}(\mathbf{x}))=\int_{\Gamma^{per}}G_k^q(\mathbf{x},\mathbf{y})\varphi(\mathbf{y})ds(\mathbf{y}),\quad \mathbf{x}\in\Gamma^{per}
\end{equation}
and
\begin{equation}\label{eq:SL_j_1}
  [S_{k,h}^{q,j}(\varphi)](\mathbf{x}):=\lim_{\varepsilon\to 0} [SL_{k,h}^{q,j}\varphi](\mathbf{x}+ \varepsilon\mathbf{n}(\mathbf{x}))=\int_{\Gamma^{per}}G_{k,h}^{q,j}(\mathbf{x},\mathbf{y})\varphi(\mathbf{y})ds(\mathbf{y}),\quad \mathbf{x}\in\Gamma^{per},\qquad h>0
\end{equation}
as well as
\begin{equation}\label{eq:SL_j_2}
  [S_{k,h}^{q,j}(\varphi)](\mathbf{x}):=\lim_{\varepsilon\to 0} [SL_{k,h}^{q,j}\varphi](\mathbf{x}- \varepsilon\mathbf{n}(\mathbf{x}))=\int_{\Gamma^{per}}G_{k,h}^{q,j}(\mathbf{x},\mathbf{y})\varphi(\mathbf{y})ds(\mathbf{y}),\quad \mathbf{x}\in\Gamma^{per},\qquad h<0.
\end{equation}
Also, we have
\begin{equation}\label{eq:DLT}
  \lim_{\varepsilon\to 0} \nabla[SL_k^q\varphi](\mathbf{x}\pm \varepsilon\mathbf{n}(\mathbf{x}))\cdot\mathbf{n}(\mathbf{x})=\mp \frac{1}{2}\varphi(\mathbf{x})+[(K_k^q)^\top(\varphi)](\mathbf{x}),\quad \mathbf{x}\in\Gamma^{per}
\end{equation}
and
\begin{equation}\label{eq:DLT_j_1}
  \lim_{\varepsilon\to 0} \nabla[SL_{k,h}^{q,j}\varphi](\mathbf{x}+ \varepsilon\mathbf{n}(\mathbf{x}))\cdot\mathbf{n}(\mathbf{x})=-\frac{1}{2}\varphi(\mathbf{x})+[(K_{k,h}^{q,j})^\top(\varphi)](\mathbf{x}),\quad \mathbf{x}\in\Gamma^{per},\qquad h>0
\end{equation}
as well as
\begin{equation}\label{eq:DLT_j_2}
  \lim_{\varepsilon\to 0} \nabla[SL_{k,h}^{q,j}\varphi](\mathbf{x}- \varepsilon\mathbf{n}(\mathbf{x}))\cdot\mathbf{n}(\mathbf{x})=\frac{1}{2}\varphi(\mathbf{x})+[(K_{k,h}^{q,j})^\top(\varphi)](\mathbf{x}),\quad \mathbf{x}\in\Gamma^{per},\qquad h<0.
\end{equation}
In equations~\eqref{eq:DLT}, the adjoint double-layer operators can be defined explicitly as
\begin{equation}\label{eq:DLT_explicit}
  [(K_k^q)^\top(\varphi)](\mathbf{x})=\int_{\Gamma^{per}}\frac{\partial G_k^q(\mathbf{x},\mathbf{y})}{\partial\mathbf{n}(\mathbf{x})}\varphi(\mathbf{y})ds(y),\quad \mathbf{x}\in\Gamma^{per}
\end{equation}
with similar definitions for the operators defined in equations~\eqref{eq:DLT_j_1} and~\eqref{eq:DLT_j_2} respectively.
\subsubsection{Boundary-integral  representation of RtR maps}
Having defined the $\alpha$-quasi-periodic boundary-integral operators above, we are now in a position to compute the various RtR operators $\mathcal{S}^j$. We start with the RtR operator $\mathcal{S}^0$ corresponding to problem~\eqref{eq:H0}. We define $Z_0=i\eta\gamma_0^{-1}$ and seek $w_0$ in the form
\[
w_0:=SL^q_{k_0}\varphi_0,
\]
in which the single-layer potential $SL^q_{k_0}$ is defined in~\eqref{eq:single_layer} integrating on the curve $\Gamma_0$.  From the equation
\begin{equation}
  \mathcal{S}^0 [(1/2) I + K - Z_0 S_{\Gamma_0,k_0}^q]
   = [(1/2) I + K - Z_0 S_{\Gamma_0,k_0}^q]\,,
\end{equation}
we obtain an explicit formula for the RtR operator $\mathcal{S}^0$ defined in equation~\eqref{eq:S0},
\begin{equation}\label{eq:calc_S0}
  \mathcal{S}^0=I+2Z_0S_{\Gamma_0,k_0}^q\left(\frac{1}{2}I+(K_{\Gamma_0,k_0}^q)^\top-Z_0S_{\Gamma_0,k_0}^q\right)^{-1},
\end{equation}
in which the operators $(K_{\Gamma_0,k_0}^q)^\top$ are defined just as in equations~\eqref{eq:DLT_explicit} but with unit normal $n_0$ pointing into $\Omega^{-}_0$ (exterior of $\Omega_0$). Here and in what follows we introduce an additional subscript to make explicit the curve that is the domain of integration of the boundary-integral operators. 

The invertibility of the operator featured in equations~\eqref{eq:calc_S0} can be established in a straightforward manner.
\begin{theorem}\label{well_p_0}
  Under the assumptions that $F_0$ is $C^2$, and $k_0$ is not a Wood frequency, the operator
  \[
  \mathcal{A}_0:=\frac{1}{2}I+(K_{\Gamma_0,k_0}^q)^\top-Z_0S_{\Gamma_0,k_0}^q,\quad \mathcal{A}_0:L^2_{per}(\Gamma_0)\to L^2_{per}(\Gamma_0)
  \]
  is invertible with continuous inverse. 
\end{theorem}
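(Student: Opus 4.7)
The plan is to prove invertibility by the Fredholm alternative on $L^2_{per}(\Gamma_0)$. First, I would show that $\mathcal{A}_0-\tfrac12 I = (K^q_{\Gamma_0,k_0})^\top - Z_0 S^q_{\Gamma_0,k_0}$ is a compact operator. Since $k_0$ is not a Wood frequency, the quasi-periodic Green function $G^q_{k_0}$ of~\eqref{eq:qper_G_freq} is well defined and differs from the free-space Helmholtz kernel $G_{k_0}$ by a smooth contribution; consequently $S^q_{\Gamma_0,k_0}$ inherits only the standard logarithmic singularity on the diagonal, and on a $C^2$ interface the leading singularity of the kernel of $(K^q_{\Gamma_0,k_0})^\top$ cancels since $\mathbf{n}(\mathbf{x})\cdot(\mathbf{x}-\mathbf{y})=O(|\mathbf{x}-\mathbf{y}|^2)$. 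Standard results for weakly singular integral operators on smooth periodic curves then yield compactness of both operators on $L^2_{per}(\Gamma_0)$, so $\mathcal{A}_0$ is a compact perturbation of $\tfrac12 I$ and hence Fredholm of index zero.

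The core of the argument is injectivity. Assume $\mathcal{A}_0\varphi_0=0$ and set $w_0:=SL^q_{\Gamma_0,k_0}\varphi_0$. This defines, via~\eqref{eq:single_layer}, an $\alpha$-quasi-periodic Helmholtz solution that radiates upward in $\Omega_0^{per}$ and downward in the complementary periodic strip below $\Gamma_0$. The jump relation~\eqref{eq:DLT} for the normal derivative of the single-layer potential shows that the interior Robin trace $\gamma_0\partial_{n_0}w_0-i\eta\,w_0$ on $\Gamma_0$ equals $\gamma_0\mathcal{A}_0\varphi_0=0$, so Theorem~\ref{wp_Omega_0} forces $w_0\equiv 0$ in $\Omega_0^{per}$. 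By continuity of the single-layer potential across $\Gamma_0$, the Dirichlet trace of $w_0$ from below also vanishes.

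It then remains to show that $w_0$ vanishes in the periodic strip below $\Gamma_0$. That restriction is an $\alpha$-quasi-periodic, downward-radiating Helmholtz solution with zero Dirichlet data on $\Gamma_0$; applying Green's identity in the truncated cell $\{(x_1,x_2): 0\le x_1\le d,\ -h\le x_2\le F_0(x_1)\}$ and substituting a downward Rayleigh expansion analogous to~\eqref{eq:rad_down}, the imaginary part forces every propagating coefficient to vanish, so $w_0$ decays exponentially as $x_2\to-\infty$. Since $k_0$ is not a Wood frequency, a standard uniqueness result for the exterior periodic Dirichlet problem (e.g.~\cite{arens2010scattering}, or a direct Rellich/spectral argument ruling out $L^2$ eigenfunctions of the quasi-periodic Dirichlet Laplacian on the lower half-strip at $k_0^2$) then gives $w_0\equiv 0$ below $\Gamma_0$. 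The jump of the normal derivative of the single-layer potential yields $\varphi_0=0$, which combined with the Fredholm property completes the proof.

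The step I anticipate as the main obstacle is the uniqueness in the lower domain. In Theorem~\ref{wp_Omega_0} the impedance term $i\eta\gamma_0^{-1}\int_{\Gamma_0}|w_0|^2$ supplied a crucial imaginary contribution that forced the boundary trace of $w_0$ to vanish; the pure Dirichlet condition produces no analogous term, so after eliminating the propagating Rayleigh modes one must invoke an additional ingredient (a Rellich-type identity, absence of embedded eigenvalues for the quasi-periodic Dirichlet Laplacian, or existing well-posedness theory for exterior periodic Dirichlet problems at non-Wood frequencies) to close the loop. The Fredholm setup and the jump-relation bookkeeping are routine by comparison.
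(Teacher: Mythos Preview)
Your proposal is correct and follows essentially the same route as the paper: Fredholm alternative via compactness of $(K^q_{\Gamma_0,k_0})^\top$ and $S^q_{\Gamma_0,k_0}$ on the $C^2$ curve, then injectivity by defining $w_0=SL^q_{k_0}\varphi_0$, applying Theorem~\ref{wp_Omega_0} in $\Omega_0$, passing to zero Dirichlet data below, invoking uniqueness for the periodic Dirichlet problem in $\Omega_0^-$, and reading off $\varphi_0=0$ from the jump of the normal derivative. The paper is in fact terser than you are on the step you flag as the main obstacle: it simply asserts ``$v_0$ is identically zero in the domain $\Omega_0^{-}$ by uniqueness [of] the Dirichlet problem'' without further justification, whereas you correctly identify that this requires an external ingredient (well-posedness of the quasi-periodic sound-soft scattering problem at non-Wood frequencies, as in~\cite{Petit} or~\cite{arens2010scattering}).
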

\begin{proof}
Because of the regularity of the boundary $\Gamma_0$, both operators $(K_{\Gamma_0,k_0}^q)^\top:L^2_{per}(\Gamma_0)\to L^2_{per}(\Gamma_0)$ and $S_{\Gamma_0,k_0}^q:L^2_{per}(\Gamma_0)\to L^2_{per}(\Gamma_0)$ are compact. Thus, the conclusion of the Theorem follows once we establish the injectivity of the operator $\mathcal{A}_0$. Let $\varphi_0\in Ker(\mathcal{A}_0)$ and define $v_0:=SL_{k_0}^q\varphi_0$ in $\mathbb{R}^2\setminus \{(x_1,F_0(x_1)), x_1\in\mathbb{R})\}$.  The function $v_0$ is a radiating $\alpha$-quasi-periodic solution of the Helmholtz equation in $\Omega_0$ with impedance boundary conditions $\partial_{n_0}v_0-Z_0v_0=0$ on $\Gamma_0$, and thus, by Theorem~\ref{wp_Omega_0}, we have that $v_0$ is identically zero in $\Omega_0$. In particular, it follows that $v_0=0$ on $\Gamma_0$. Thus, $v_0$ is a radiating, $\alpha$-quasi-periodic solution of the Helmholtz equation with wavenumber $k_0$ in the domain $\Omega_0^{-}:=\{(x_1,x_2):x_2<F_0(x_1)\}$ with zero Dirichlet boundary conditions on $\Gamma_0$. This implies that $v_0$ is identically zero in the domain $\Omega_0^{-}$ by uniqueness the Dirichlet problem. Finally, the jump conditions of the normal derivatives of single-layer potentials imply that $\varphi_0=0$ on $\Gamma_0$.
\end{proof}

Alternatively, we can seek $w_0$ in the form
\[
w_0:=SL^{q,j}_{k_0,h}\varphi\,,
\]
from which we obtain a representation of the RtR operators $\mathcal{S}^0$ in the form
\begin{equation}\label{eq:calc_S0_h}
  \mathcal{S}^0=I+2Z_0S_{\Gamma_0,k_0,h}^{q,j}\left(\frac{1}{2}I+(K_{\Gamma_0,k_0,h}^{q,j})^\top-Z_0S_{\Gamma_0,k_0,h}^{q,j}\right)^{-1},\quad h>0.
\end{equation}
The invertibility of the operators that feature in equation~\eqref{eq:calc_S0_h} is much more subtle.  It can be established by modification of arguments presented in a recent paper of some of the authors~\cite{bruno2017three}.  The proof is presented there for doubly periodic layered media in three dimensions.  The theorem below is also valid in three dimensions; its proof in the two-dimensional reduction is given in Appendix~\ref{proof_Wood}. 

\begin{theorem}\label{well posedness_Wood}
  Under the assumption that $F_0$ is $C^2$ and that $k_0$ is a Wood frequency, the operator
  \[
  \mathcal{A}_{0,h}:=\frac{1}{2}I+(K_{\Gamma_0,k_0,h}^{q,j})^\top-Z_0S_{\Gamma_0,k_0,h}^{q,j},\ j\geq 1, \mathcal{A}_{0,h}:L^2_{per}(\Gamma_0)\to L^2_{per}(\Gamma_0)
  \]
  is invertible with continuous inverse for all but a discrete set of values of the shift $h>0$.
\end{theorem}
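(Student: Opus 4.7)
The Fredholm step proceeds as in the proof of Theorem~\ref{well_p_0}.  Writing $G^{q,j}_{k_0,h}(\mathbf{x}) = G_{k_0}(\mathbf{x}) + R_h(\mathbf{x})$, where $G_{k_0}$ carries the standard logarithmic diagonal singularity and $R_h$ collects the remaining periodic images, the shifted copies (whose singularities live on the curves $\Gamma_0^{(\ell)} := \Gamma_0 - (0,\ell h)$, disjoint from $\Gamma_0$ for $\ell \geq 1$), and the smooth plane-wave Wood corrections, the operators $S^{q,j}_{\Gamma_0,k_0,h}$ and $(K^{q,j}_{\Gamma_0,k_0,h})^\top$ are compact on $L^2_{per}(\Gamma_0)$ under the $C^2$ regularity of $\Gamma_0$, exactly as in the non-Wood case.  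Hence $\mathcal{A}_{0,h}$ is Fredholm of index zero and its invertibility reduces to its injectivity.

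For injectivity, let $\varphi_0 \in \ker \mathcal{A}_{0,h}$ and set $v_0 := SL^{q,j}_{k_0,h}\varphi_0$.  The jump relation~\eqref{eq:DLT_j_1} combined with $\mathcal{A}_{0,h}\varphi_0 = 0$ shows that $v_0$ satisfies the homogeneous impedance condition $\partial_{n_0}v_0 - Z_0 v_0 = 0$ on $\Gamma_0$ from the $\Omega_0$ side.  Since $h > 0$, every source of $G^{q,j}_{k_0,h}$ lies on or below $\Gamma_0$, so $v_0$ is an $\alpha$-quasi-periodic radiating solution of the Helmholtz equation with wavenumber $k_0$ in $\Omega_0^{per}$; Theorem~\ref{wp_Omega_0} then forces $v_0 \equiv 0$ in $\Omega_0^{per}$, and continuity of the single-layer potential across $\Gamma_0$ yields $v_0|_{\Gamma_0} = 0$ from both sides.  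Inserting the Rayleigh-Fourier expansion of $G^{q,j}_{k_0,h}$ valid for $x_2 > \max F_0$ into the integral representation of $v_0$ and matching each outgoing Rayleigh coefficient to zero yields, for every $r \notin W$, the relation $(1 - e^{i\beta_{0,r}h})^j \hat{\varphi}_0^{(+,r)} = 0$, where $\hat{\varphi}_0^{(+,r)} := \int_{\Gamma_0} e^{-i\alpha_r y_1 - i\beta_{0,r} y_2} \varphi_0(\mathbf{y})\, ds(\mathbf{y})$, together with analogous conditions for $r \in W$ in which the nonzero constants $c_r$ replace the vanishing prefactor.  The prefactor never vanishes for the infinitely many evanescent modes, vanishes only on the discrete set $\{2\pi m/\beta_{0,r}\}_{m \geq 1}$ for each of the finitely many propagating modes, and the Wood conditions add at most finitely many further exceptional values; off the resulting discrete set $\mathcal{H} \subset (0,\infty)$ all the coefficients $\hat{\varphi}_0^{(+,r)}$ vanish.

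The hardest step is converting the vanishing of the upward Fourier data into the pointwise conclusion $\varphi_0 \equiv 0$ on the general $C^2$ curve $\Gamma_0$.  I plan to close this by a companion analysis in the strip $R_1$ between $\Gamma_0$ and $\Gamma_0^{(1)}$: the decomposition of the shifted Green function into upward and downward Rayleigh modes shows that, once $\hat{\varphi}_0^{(+,r)} = 0$ for every $r$, the representation of $v_0$ in any horizontal sub-strip of $R_1$ reduces to a purely downward Rayleigh series with coefficients proportional to the downward data $\hat{\varphi}_0^{(-,r)} := \int_{\Gamma_0} e^{-i\alpha_r y_1 + i\beta_{0,r} y_2} \varphi_0\, ds$, and the trace condition $v_0|_{\Gamma_0} = 0$ together with analytic continuation inside $R_1$ then forces each $\hat{\varphi}_0^{(-,r)}$ to vanish after excluding a further discrete set of $h$ arising from the zeros of the conjugate prefactors $(1 - e^{-i\beta_{0,r}h})^j$ and a parallel family of Wood-type conditions.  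Absorbing these values into $\mathcal{H}$ (which remains discrete), the simultaneous vanishing of the upward and downward Fourier traces forces $\varphi_0 \equiv 0$ on $\Gamma_0$ by density of the corresponding system of quasi-plane-wave traces in $L^2_{per}(\Gamma_0)$.  Alternatively, since $\mathcal{A}_{0,h}$ depends holomorphically on $h$ and is Fredholm of index zero, one may invoke analytic Fredholm theory: once a single invertible value of $h$ is exhibited (for instance by a perturbative argument or by taking $h$ with a small positive imaginary part where the shifted images decay exponentially), the set of values of $h$ at which $\mathcal{A}_{0,h}$ fails to be invertible is automatically discrete.
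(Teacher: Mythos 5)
Your Fredholm reduction and the first half of the injectivity argument (defining $v_0=SL^{q,j}_{k_0,h}\varphi_0$, invoking Theorem~\ref{wp_Omega_0} to get $v_0\equiv 0$ in $\Omega_0$, and reading off $(1-e^{i\beta_{0,r}h})^j\hat\varphi_0^{(+,r)}=0$ for $r\notin W$ and $c_r\hat\varphi_0^{(+,r)}=0$ for $r\in W$, which kills all upward moments outside a discrete set of $h$) coincide with the paper's proof. The second half, however, has genuine gaps. First, your strip argument in $R_1$ only produces a horizontal sub-strip in which a pure Rayleigh expansion is valid when $h$ exceeds the peak-to-trough height of $\Gamma_0$; for smaller $h$ no such sub-strip exists, so your argument excludes an entire interval of shifts rather than a discrete set, which is weaker than the theorem. (Also, in that strip only the $\ell=0$ image lies above the observation point, so the downward coefficients are $\hat\varphi_0^{(-,r)}/\beta_r$ \emph{without} the conjugate prefactor $(1-e^{-i\beta_{0,r}h})^j$; that prefactor appears only below all $j$ shifted images.) Second, and more seriously, the passage from ``$v_0|_{\Gamma_0}=0$ and $v_0$ is a downward series in the sub-strip'' to ``$\hat\varphi_0^{(-,r)}=0$'' is not a Fourier-matching statement on a non-flat $\Gamma_0$: it requires uniqueness of the radiating Dirichlet problem in the full lower domain $\Omega_0^{-}$, and $v_0=SL^{q,j}_{k_0,h}\varphi_0$ is \emph{not} a Helmholtz solution there because it has poles on the shifted curves $\Gamma_0^{(\ell)}$, $1\le\ell\le j$. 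Circumventing exactly this obstruction is the heart of the paper's proof: it replaces the shifted kernel by the pole-free but non-radiating Green function $B^q$, whose Wood modes grow linearly in $|x_2|$, shows that the linearly growing terms above cancel because $b_r^{+}=0$, propagates the resulting $x_2$-independence through $\Omega_0$ by analyticity, and then uses the identity $b_r^{-}=b_r^{+}$ for $r\in W$ to kill the linear growth below, so that (after subtracting constants) Dirichlet uniqueness in $\Omega_0^{-}$ applies and the jump relations give $\varphi_0=0$. You never address these linear-in-$x_2$ Wood modes, and your closing appeal to ``density of the quasi-plane-wave traces in $L^2_{per}(\Gamma_0)$'' is an unproven completeness assertion that is essentially equivalent to the injectivity you are trying to establish (its standard proof is precisely the potential-theoretic argument with $B^q$).

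Your alternative via analytic Fredholm theory is a legitimately different strategy, but as written it is not a proof: holomorphy of $h\mapsto\mathcal{A}_{0,h}$ on a connected complex neighborhood of $(0,\infty)$ is asserted without justification (the lattice sum and the $|x_2+\ell h|$ structure of the kernel make this nontrivial), and no invertible value of $h$ is actually exhibited --- producing even one such value at a Wood frequency is essentially the difficulty the paper's $B^q$ construction is designed to resolve.
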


Note that the calculations of the RtR maps $\mathcal{S}^{N+1}$ can be performed similarly to arrive at
\begin{equation}\label{eq:calc_SN}
  \mathcal{S}^{N+1}=I+2Z_{N+1}S_{\Gamma_{N},k_{N+}}^q\left(\frac{1}{2}I+(K_{\Gamma_{N},k_{N+1}}^q)^\top-Z_{N+1}S_{\Gamma_{N},k_{N+1}}^q\right)^{-1},
\end{equation}
\begin{equation}\label{eq:calc_SN_h}
  \mathcal{S}^{N+1}=I+2Z_{N+1}S_{\Gamma_{N},k_{N+1},h}^{q,j}\left(\frac{1}{2}I+(K_{\Gamma_{N},k_{N+1},h}^{q,j})^\top-Z_{N+1}S_{\Gamma_{N},k_{N+1},h}^{q,j}\right)^{-1},\quad h<0.
\end{equation}
where $Z_{N+1}=i\eta\gamma_{N+1}^{-1}$ and the adjoint double-layer operators are defined with respect to the unit normal $n_{N+1}$ pointing outside of the domain $\Omega_{N+1}$. The invertibility of the operators in equations~\eqref{eq:calc_SN} and~\eqref{eq:calc_SN_h} can be established analogously to the results in Theorems~\ref{well_p_0} and~\ref{well posedness_Wood}.

Finally, the RtR maps for the domains $\Omega_j,1\leq j<N$ can be expressed in closed form via boundary-integral equations. Indeed, consider a generic domain $\Omega^{per}:=\{(x_1,x_2):0\leq x_1\leq d, F_b(x_1)\leq x_2\leq F_t(x_1)\}$ where $F_t$ and $F_b$ are $d$-periodic $C^2$ functions. Let us denote by $\Gamma_t:=\{(x_1,x_2):0\leq x_1\leq d, x_2=F_t(x_1)\}$, $\Gamma_b:=\{(x_1,x_2):0\leq x_1\leq d, x_2=F_b(x_1)\}$, and let $n$ denote the unit normal to $\Gamma_t\cup\Gamma_b$ pointing outside of the domain $\Omega^{per}$---see Figure~\ref{fig:layer}. Then the Helmholtz problems~\eqref{eq:H} can be all expressed in the generic form
\begin{eqnarray}\label{eq:Hgen}
  \Delta w+k^2w&=&0\quad{\rm in}\ \Omega^{per}\\
  \partial_{n}w-Z\ w&=&g_{t}\quad{\rm on}\ \Gamma_{t}\nonumber\\
  \partial_{n}w-Z\ w&=&g_{b}\quad{\rm on}\ \Gamma_{b}\nonumber
  \end{eqnarray}
where $g_t$ and $g_b$ are $\alpha$-quasi-periodic functions and $\Im{Z}>0$.  The RtR operators $\mathcal{S}^j,1\leq j<N$ are related to the following RtR operator associated with the Helmholtz problems~\eqref{eq:Hgen}:
\begin{equation}\label{RtRboxj_O}
   \mathcal{S}\begin{bmatrix}g_{t}\\g_{b}\end{bmatrix}:=\begin{bmatrix}(\partial_{n}w+Z\ w)|_{\Gamma_{t}}\\(\partial_{n}w+Z\ w)|_{\Gamma_{b}}\end{bmatrix}.
 \end{equation}
Seeking the solution $w$ of equations~\eqref{eq:Hgen} in the form
\[
w = SL_{k,t}^q\varphi_t+SL_{k,b}^q\varphi_b\,,
\]
in which $SL_{k,t}^q$ ($SL_{k,b}^q$) denotes the quasi-periodic single-layer potential whose domain of integration in $\Gamma_t$ ($\Gamma_b$), we arrive at the following expression for the RtR operator $\mathcal{S}$: 
\begin{equation}\label{eq:S_layer_j}
  \mathcal{S}=\begin{bmatrix}I &0\\0& I\end{bmatrix}+2Z\begin{bmatrix}S^q_{k,t,t} & S^q_{k,b,t}\\ S^q_{k,t,b} & S^q_{k,b,b}\end{bmatrix}\begin{bmatrix}(1/2)I+(K^q_{k,t,t})^\top +Z S^q_{k,t,t} & (K^q_{k,b,t})^\top+Z S^q_{k,b,t}\\ (K^q_{k,t,b})^\top+Z S^q_{k,t,b} & (1/2)I+(K^q_{k,b,b})^\top +Z S^q_{k,b,b}\end{bmatrix}^{-1}.
\end{equation}
We note that in equation~\eqref{eq:S_layer_j}, the subscripts in the notation $S^q_{k,b,t}$ signify that in equation~\eqref{eq:SL} the target point $\mathbf{x}\in\Gamma_t$ and the integration point $\mathbf{y}\in\Gamma_t$, whereas in the notation $S^q_{k,b,t}$ signify that in equation~\eqref{eq:SL} the target point $\mathbf{x}\in\Gamma_t$ and the integration point $\mathbf{y}\in\Gamma_b$.  All the other additional subscripts in equation~\eqref{eq:S_layer_j} have similar meanings related to the locations of target and integration points for single and adjoint double-layer boundary-integral operators. The invertibility of the operators featuring in equation~\eqref{eq:S_layer_j} can be established using similar reasoning to that in the proof of Theorem~\ref{well_p_0}.

In the case when $k$ is a Wood frequency, an equivalent representation of the RtR operator $\mathcal{S}$ can be obtained if we replace the quasi-periodic boundary-integral operators in equation~\eqref{eq:S_layer_j} by shifted quasi-periodic boundary-integral operators, provided the shift $h>0$ is chosen larger than the width of the domain $\Omega^{per}$; the latter requirement is needed to ensure that no poles of the shifted quasi-periodic functions are contained in the domain $\Omega^{per}$. The invertibility of the ensuing matrix operator is the subject of the following theorem:
\begin{theorem}\label{inv_layer_Wood}
  Assume $k$ is a Wood frequency. Then the operator
  \[
  \mathcal{A}_h:=\begin{bmatrix}(1/2)I+(K^{q,j}_{k,h,t,t})^\top +Z S^{q,j}_{k,h,t,t} & (K^{q,j}_{k,h,b,t})^\top+Z S^{q,j}_{k,h,b,t}\\ (K^{q,j}_{k,h,t,b})^\top+Z S^{q,j}_{k,h,t,b} & (1/2)I+(K^{q,j}_{k,h,b,b})^\top +Z S^{q,j}_{k,h,b,b}\end{bmatrix}
  \]
  is invertible with continuous inverse in the space $L^2_{per}(\Gamma_t)\times L^2_{per}(\Gamma_b)$ for all but a discrete set of values of the shift $h>0$.
\end{theorem}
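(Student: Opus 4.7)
The plan is to establish invertibility by reducing to the Fredholm alternative and then applying analytic Fredholm theory in the shift parameter $h$. First I would write $\mathcal{A}_h = \tfrac{1}{2}I + \mathcal{K}_h$, where $\mathcal{K}_h$ is the $2\times 2$ block matrix obtained after peeling off the identity contributions on the diagonal. Along the diagonal blocks, the shifted Green function $G^{q,j}_{k,h}$ has the same leading logarithmic singularity as $G_k$ on the diagonal of $\Gamma_t$ (resp.\ $\Gamma_b$), with a smooth remainder provided that no shifted pole lies on the curve; the classical $C^2$-curve theory of single and adjoint double-layer operators then gives compactness. The off-diagonal blocks involve kernels evaluated at $\mathbf{x}\in\Gamma_t$, $\mathbf{y}\in\Gamma_b$ (or vice versa), which are $C^\infty$ away from a discrete set of exceptional $h$ at which a shifted pole of $G^{q,j}_{k,h}(\mathbf{x}-\mathbf{y})$ falls on the opposite interface; outside that set these operators are Hilbert--Schmidt. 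Hence $\mathcal{A}_h$ is Fredholm of index zero on $L^2_{per}(\Gamma_t)\times L^2_{per}(\Gamma_b)$, and invertibility reduces to injectivity.

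For injectivity, I would take $(\varphi_t,\varphi_b)^\top \in \ker \mathcal{A}_h$ and form $w = SL^{q,j}_{k,h,t}\varphi_t + SL^{q,j}_{k,h,b}\varphi_b$ in $\Omega^{per}$. Because $h$ exceeds the width of $\Omega^{per}$, all shifted poles fall below $\Omega^{per}$, so $w$ is a genuine $\alpha$-quasi-periodic Helmholtz solution there. The kernel condition, combined with the jump relations~\eqref{eq:DLT_j_1}--\eqref{eq:DLT_j_2} for the shifted single-layer potential, translates into homogeneous Robin data $\partial_n w - Zw = 0$ on both $\Gamma_t$ and $\Gamma_b$. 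Applying Green's identity in $\Omega^{per}$ and taking imaginary parts, using $\Im Z = \eta > 0$, forces $w$ to vanish on $\Gamma_t\cup\Gamma_b$; Holmgren's theorem then yields $w\equiv 0$ in $\Omega^{per}$.

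The last and most delicate step is to transfer this interior vanishing to the vanishing of the densities. Above $\Gamma_t$ the representation is pole-free (the shifts are downward), so $w$ is a radiating $\alpha$-quasi-periodic Helmholtz solution with zero Dirichlet trace on $\Gamma_t$; uniqueness of the exterior radiating Dirichlet problem gives $w\equiv 0$ above $\Gamma_t$, and the single-layer jump relation then forces $\varphi_t=0$. With $\varphi_t=0$, the contribution $SL^{q,j}_{k,h,t}\varphi_t$ disappears and the analogous argument below $\Gamma_b$ yields $\varphi_b=0$. The main obstacle I anticipate is justifying the exterior Dirichlet uniqueness uniformly in $h$: the $c_r$ correction terms in~\eqref{eq:qper_G_shift} are designed so that the shifted single-layer potential radiates correctly in the propagating Rayleigh directions, but there may still be isolated $h$ at which the representation degenerates or a shifted pole grazes an interface. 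I would absorb these exceptional values by invoking the analytic Fredholm theorem: since $h\mapsto \mathcal{A}_h$ is operator-valued meromorphic on $\{h > \operatorname{width}(\Omega^{per})\}$ and is invertible at the value of $h$ for which the above injectivity argument goes through cleanly, it is invertible for all but a discrete set of $h>0$. I would follow the strategy of~\cite{bruno2017three}, adapting its three-dimensional argument to the present two-interface geometry in the plane.
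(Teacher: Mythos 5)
Your Fredholm reduction and the first half of the injectivity argument (zero Robin data on $\Gamma_t\cup\Gamma_b$, hence $w\equiv 0$ in $\Omega^{per}$; then exterior radiating Dirichlet uniqueness above $\Gamma_t$ and the normal-derivative jump to get $\varphi_t=0$) follow the paper's proof. The gap is in your last sentence of that step: you claim that once $\varphi_t=0$, ``the analogous argument below $\Gamma_b$ yields $\varphi_b=0$.'' There is no analogous argument below $\Gamma_b$. The shifted Green function $G^{q,j}_{k,h}$ with $h>0$ is radiating and pole-free only in the upper half-plane; below $\Gamma_b$ the potential $SL^{q,j}_{k,h,b}\varphi_b$ has poles at depth $\ell h$, $1\le\ell\le j$, and does not satisfy the outgoing radiation condition as $x_2\to-\infty$, so you cannot invoke exterior Dirichlet uniqueness there. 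The situation is structurally asymmetric, not symmetric, and this asymmetry is the whole difficulty of the shifted-Green-function method. What you actually know at this stage is only that $w=SL^{q,j}_{k,h,b}\varphi_b$ vanishes \emph{above} $\Gamma_b$, and you must extract $\varphi_b=0$ from that one-sided information. The paper does this by repeating the argument of Theorem~\ref{well posedness_Wood} (Appendix~\ref{proof_Wood}): expand $w$ in a Rayleigh series above $\max F_b$, conclude that all Fourier coefficients $b_r^{+}$ of the density vanish --- which requires $c_r\neq 0$ for $r\in U$ and $e^{i\beta_r h}\neq 1$ for the propagating modes $r\notin U$ --- then introduce the auxiliary non-radiating Green function $B^q$ (with the linear-in-$|x_2|$ term replacing the resonant modes), show the associated potential is independent of $x_2$ above $\Gamma_b$ and, by analytic continuation and a comparison of the coefficients $b_r^{\pm}$ for $r\in U$, that a suitable modification of it is a genuine radiating solution below $\Gamma_b$ vanishing on $\Gamma_b$; only then does the jump relation give $\varphi_b=0$.

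This also corrects your account of where the exceptional discrete set of shifts comes from: it is not poles grazing an interface (the standing hypothesis $h>\mathrm{width}(\Omega^{per})$ already rules that out), nor is it something to be swept up by analytic Fredholm theory. It is precisely the set $\{h>0: e^{i\beta_r h}=1 \text{ for some propagating } r\}$ arising in the coefficient-recovery step above. Your proposed appeal to the analytic Fredholm theorem cannot substitute for this, because that theorem requires invertibility at at least one value of $h$, and your injectivity argument, as written, does not close at any $h$ --- the failure at the $\varphi_b$ step is independent of the choice of shift.
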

\begin{proof} Given that all the boundary-integral operators that enter the definition of the matrix operator $\mathcal{A}_h$ are compact in $L^2_{per}(\Gamma_t)\times L^2_{per}(\Gamma_b)$, the result follows once we establish the injectivity of the operator $\mathcal{A}_h$. Let $(\varphi_t,\varphi_b)\in Ker(\mathcal{A}_h)$ and define
  \[
  w = SL_{k,h,t}^{q,j}\varphi_t+SL_{k,h,b}^{q,j}\varphi_b\quad{\rm in}\ \mathbb{R}^2\setminus(\Gamma_t\cup\Gamma_b).
  \]
  Clearly $w$ is a $\alpha$-quasi-periodic solution of equation~\eqref{eq:Hgen} with zero Robin boundary conditions on $\Gamma_t$ and $\Gamma_b$, and as such $w=0$ in $\Omega^{per}$. In particular, $w$ vanishes on $\Gamma_t$. Also, given that the shift $h$ is chosen so that the poles of $G^{q,j}_{k,h}$ are in the domain $\Omega_b^{-}:=\{(x_1,x_2):0\leq x_1\leq d, x_2\leq F_b(x_1)\}$, $w$~is a radiating $\alpha$-quasi-periodic solution of the Helmholtz equation in the domain $\Omega_t^{+}=\{(x_1,x_2):0\leq x_1\leq d, F_t(x_1)\leq x_2\}$, which vanishes on $\Gamma_t$. This means that $w=0$ in $\Omega^{+}$~\cite{Petit}. Using the jump conditions of the normal derivatives of single-layer potentials across $\Gamma_t$, we get that $\varphi_t=0$ on $\Gamma_t$. Accordingly, we have that
  \[
  w = SL_{k,h,b}^{q,j}\varphi_b\quad{\rm in}\ \mathbb{R}^2\setminus \Gamma_b
  \]
  vanishes in the domain $\Omega_b^{+}=\{(x_1,x_2):0\leq x_1\leq d, F_b(x_1)\leq x_2\}$. The arguments in the proof of Theorem~\ref{well posedness_Wood} can be repeated verbatim to conclude that $\varphi_b=0$ on $\Gamma_b$.
  \end{proof}

\begin{remark}\label{inclusions}
  The computation of the layer RtR maps described in equations~\eqref{eq:S_layer_j} can be extended in a straightforward manner to the case when impenetrable or penetrable inclusions are present in the domain $\Omega^{per}$.  In this case, the matrix $\mathcal{S}$ in equations~\eqref{eq:S_layer_j} needs be augmented by blocks that account for the interactions of the inclusions $D$ with $\Gamma_t$ and $\Gamma_b$, as well as its self-interactions that account for the boundary conditions to be imposed on $\partial D$. 
  \end{remark}

\begin{figure}
\centering
\includegraphics[scale=1]{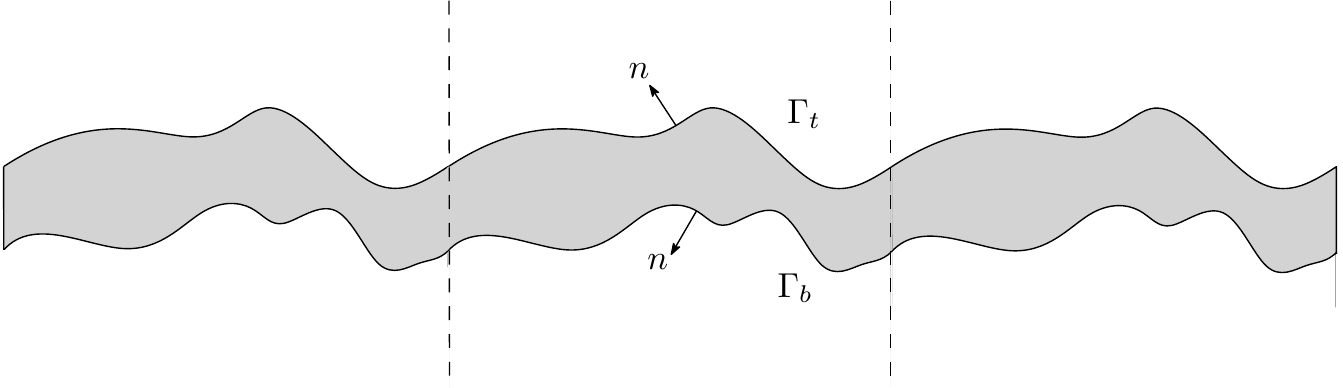}
\caption{Typical middle-layer structure.}
\label{fig:layer}
\end{figure}

\section{DDM Nystr\"om discretization\label{nystrom}}

Our numerical solution of equations~\eqref{ddm_t_exp}
relies on Nystr\"om discretizations of the boundary-integral operators featured in the computation of the RtR operators given in Section~\ref{rtr}. In order to speed up the notoriously slow convergence of the quasi-periodic Green function $G^q_k$ defined in equation~\eqref{eq:qper_G} for frequencies that are away from Wood frequencies, we make use of the recently introduced windowed Green function Method~\cite{bruno2016superalgebraically,bruno2017three,Delourme}. Specifically, let $\chi(r)$ be a smooth cutoff function equal to $1$ for $r<r_1$ and equal to $0$ for $r>r_2$ ($0<r_1<r_2$) and define the windowed Green functions
\begin{equation}\label{eq:qper_GA}
  G^{q,A}_k(x_,x_2)=\sum_{n\in\mathbb{Z}} e^{-i\alpha nd}G_k(x_1+nd,x_2)\chi(r_n/A),\quad r_n=((x_1+nd)^2+x_2^2)^{1/2}
\end{equation}
and
\begin{eqnarray}\label{eq:qper_G_shiftA}
  G^{q,j,A}_{k,h}(x_,x_2)&=&\sum_{n\in\mathbb{Z}} e^{-i\alpha nd}\sum_{\ell=0}^j(-1)^\ell \binom{j}{\ell}G_k(x_1+nd,x_2+\ell h)\chi(r_{n,\ell}/A)\nonumber\\
  &+&\sum_{r\in W} c_r e^{i\alpha_r x_1+i(\sign{h})\beta_r x_2}, \quad r_{n,\ell}=((x_1+nd)^2+(x_2+\ell h)^2)^{1/2}.
\end{eqnarray}
On account of the windowed function $\chi$, the summations in equations~\eqref{eq:qper_GA} and~\eqref{eq:qper_G_shiftA} are over a finite range of indices $n$. The functions $G^{q,A}_k$ were shown to converge superalgebraically fast to $G^q_k$ as $A\to\infty$ when $k$ is not a Wood frequency~\cite{bruno2016superalgebraically,bruno2017three,Delourme}, whereas the functions $G^{q,j,A}_{k,h}$ were shown to converge algebraically fast to a $\alpha$-quasi-periodic Green function as $A\to\infty$ (the rate increases as the number of shifts $j$ grows) in the half-plane $x_2>0$ when $h>0$ and respectively $x_2<0$ when $h<0$ for all frequencies $k$, including at and around Wood frequencies~\cite{Delourme}.

Our Nystr\"om discretizations rely on trigonometric collocation in two dimensions. As such, we reformulate the DDM system in terms of {\em periodic} quantities by extracting the phase $e^{-i\alpha x_1}$ from all Robin data, the right-hand side, as well as RtR maps. The calculation of the RtR maps is performed via boundary-integral operators acting on periodic densities $\widetilde{\varphi}$ defined as $\widetilde{\varphi}(x_1,x_2):=\varphi(x_1,x_2)e^{-i\alpha x_1}$ and periodic kernels $e^{i\alpha(x_1-y_1)}G_k^q(x_1,x_2;y_1,y_2)$. Furthermore, the discretization of the boundary-integral operators featured in Section~\ref{rtr} is done by replacing the Green functions $G^q_k$ and $G^{q,j}_k$ in their definitions by the fast convergent windowed approximations $G^{q,A}_k$ and $G^{q,j,A}_{k,h}$ defined in equations~\eqref{eq:qper_GA} and~\eqref{eq:qper_G_shiftA} respectively.  Finally, our numerical scheme requires a simple modification of the Martensen-Kussmaul (MK) periodic logarithmic splitting Nystr\"om approach~\cite{kusmaul,martensen} in order to enable high-order evaluations of boundary-integral operators whose kernels are windowed periodic Green functions---full details of this approach are given in~\cite{Delourme}. In a nutshell, boundary-integral operators that feature the windowed Green functions $G^{q,A}_k$ and $G^{q,j,A}_{k,h}$ defined in equations~\eqref{eq:qper_GA} and~\eqref{eq:qper_G_shiftA} respectively are recast in a form that involves integration around target points $\mathbf{x}$ but with domains of integration that span the whole real axis---the latter is achieved via the windowing functions $\chi$ and by periodic extensions of the densities $\widetilde{\varphi}$. This setting allows for a direct extension of the periodic logarithmic splitting of the Green functions that is central to MK Nystr\"om approach. In three dimensions, our Nystr\"om discretizations also rely on {\em global} trigonometric interpolation, use of floating partitions of unity and analytic resolution of singularities, as well as the use of windowed Green functions~\cite{bruno2017three}. Interestingly, using global trigonometric interpolation in conjunction with changes of variables to polar coordinates in the resolution of Green function singularities allows for straightforward constructions of Nystr\"om collocation matrices of three-dimensional boundary-integral operators  via two-dimensional Discrete Fourier Transform matrices. The availability of such Nystr\"om collocation matrices in three dimensions plays an important role in the efficient computations of RtR maps, as we explain next.

Following the prescriptions outlined above, a boundary-integral operator whose kernel is a windowed Green function (or its normal derivative) acting on a periodic density $\widetilde{\varphi}$ and whose domain of integration is a generic curve $\Gamma^{per}$ per the definition given in Section~\ref{rtr} (i.e. $\Gamma^{per}:=\{(x_1,F(x_1)):0\leq x_1\leq d\}$, where $F$ is a $C^2$ periodic function of principal period $d$) is Nystr\"om discretized as a $M\times M$ matrix where the periodic density $\widetilde{\varphi}$ is trigonometrically collocated at the equi-spaced mesh $\{(t_\ell,F(t_\ell)): t_\ell=\ell d/M, 0\leq \ell<M=2m\}$. For a fixed $M=2m, m>0$, assuming that the Robin data $f_j=[f_{j,j}\ f_{j,j+1}]^\top$ on each interface $\Gamma_j,0\leq j\leq N$ is collocated at the mesh $L_j:=\{(t_\ell,F_j(t_\ell)): t_\ell=\ell d/M, 0\leq \ell<M\}$, it follows that the RtR maps $\mathcal{S}^0$ and $\mathcal{S}^{N+1}$ are discretized as $M\times M$ Nystr\"om matrices $\mathcal{S}^0_M$ and $\mathcal{S}^{N+1}_M$ via Nystr\"om discretizations of the boundary-integral operators featured in equations~\eqref{eq:calc_S0} and~\eqref{eq:calc_SN} respectively in the case when neither $k_0$ nor $k_{N+1}$ are Wood frequencies or in equations~\eqref{eq:calc_S0_h} and~\eqref{eq:calc_SN_h} respectively in the case when $k_0$ and $k_{N+1}$ are Wood frequencies (same considerations apply in three dimensions). We note that according to equations~\eqref{eq:calc_S0} and~\eqref{eq:calc_SN} (and their analogues~\eqref{eq:calc_S0_h} and~\eqref{eq:calc_SN_h}), the calculation of the Nystr\"om matrices $\mathcal{S}^0_M$ and $\mathcal{S}^{N+1}_M$ require inversions of $M\times M$ matrices, which is done using LU factorizations. Similarly, the RtR maps $\mathcal{S}^j,1\leq j\leq N$ are discretized as $(2M)\times (2M)$ Nystr\"om matrices $\mathcal{S}^j_M$  via Nystr\"om discretizations of the boundary-integral operators featured in equations~\eqref{eq:S_layer_j}, and their calculations require, in turn, inversions of $(2M)\times(2M)$ matrices; these inversions are also performed through LU factorizations. It is also possible to employ Schur complements to perform the inversion of the matrices needed in the calculations of the RtR maps $\mathcal{S}^j,1\leq j\leq N$---see the proof of Theorem~\ref{Fredholm}; in that case matrices of size $M\times M$ need be inverted. The Nystr\"om discretization matrices $\mathcal{S}^j_M$ are further expressed in $M\times M$ block form
\begin{equation*}
   \mathcal{S}^j_M=\begin{bmatrix}\mathcal{S}^j_{j-1,j-1,M} & \mathcal{S}^j_{j-1,j,M}\\ \mathcal{S}^j_{j,j-1,M} & \mathcal{S}^j_{j,j,M}\end{bmatrix},
\end{equation*}
where each of the matrices above constitutes a Nystr\"om discretization matrix of the operators on the right-hand-side of equation~\eqref{eq:block}.  

Based on these Nystr\"om discretizations of RtR maps, the DDM algorithm computes matrix approximations of all the RtR maps needed. Clearly, the procedure outlined above allows us to assemble a block tridiagonal $2M(N+1)\times 2M(N+1)$ Nystr\"om discretization matrix $A_M$ of the operator matrix $A$ in equation~\eqref{eq:mA}, where each operator block in equation~\eqref{eq:mA} is replaced by its corresponding Nystr\"om discretization matrix. The computation of collocated Robin data $f_{j,M}$ at the grids $L_j$ for $0\leq j\leq N$ requires solution of a linear system featuring the matrix $A_M$. We present in what follows an efficient algorithm for the solution of this system that eliminates sequentially  the unknowns $f_{j,M}, 0\leq j\leq N$ using $N+1$ stages of recursive Schur complements; storage of all of the non-zero blocks in the matrix $A_M$ is not required by this algorithm. The key technical ingredient is that in the case when matrices
\[
\mathcal{D}:=\begin{bmatrix} I & \mathcal{A} \\ \mathcal{B} & I \end{bmatrix}
\]
are invertible, then their inverses can be computed explicitly
\begin{equation}\label{inv_matrix_explicit}
  \mathcal{D}^{-1}=\begin{bmatrix}
I+\mathcal{A}(I-\mathcal{B}\mathcal{A})^{-1}\mathcal{B} & -\mathcal{A}(I-\mathcal{B}\mathcal{A})^{-1}\\
-(I-\mathcal{B}\mathcal{A})^{-1}\mathcal{B} &
(I-\mathcal{B}\mathcal{A})^{-1}
\end{bmatrix}.
\end{equation}

The Schur complement elimination algorithm begins with

{\em Stage 1: elimination of the unknowns $f_{0,M}$}. We express the discrete DDM system in the following block form that separates the contribution of the Robin data $f_{0,M}$ from the rest of the Robin data. In detail,
\begin{eqnarray*}
  \begin{bmatrix}\mathcal{D}_{0,M}& \mathcal{A}_{0,M}\\\mathcal{B}_{0,M} & \mathcal{C}_{0,M} \end{bmatrix}\begin{bmatrix}f_{0,M}\\\widetilde{f}_{0,M}\end{bmatrix}&=&\begin{bmatrix}\rhs_{0,M}\\0_{2NM,1}\end{bmatrix}\\
      \mathcal{D}_{0,M}&=&\begin{bmatrix}I_M&\mathcal{S}^{1}_{0,0,M}\\\mathcal{S}^0_M & I_M\end{bmatrix}\\
        \mathcal{A}_{0,M}&=&\begin{bmatrix}\mathcal{S}^{1}_{0,1,M}& 0_M & 0_{M,2(N-1)M}\\ 0_M & 0_M & 0_{M,2(N-1)M}\end{bmatrix}\\
        \mathcal{B}_{0,M}&=&\begin{bmatrix}0_M & 0_M\\ 0_M & \mathcal{S}^1_{1,0,M}\\ 0_{2(N-1)M,M} & 0_{2(N-1)M,M}\end{bmatrix}\\
        \mathcal{C}_{0,M}&=&\begin{bmatrix} I_M & \mathcal{S}^2_{1,1,M}& \cdots \\ \mathcal{S}^1_{1,1,M} & I_M & \cdots\\ \cdots & \cdots &\cdots \end{bmatrix},
\end{eqnarray*}
where $\widetilde{f}_{0,M}=[f_{1,M}\ f_{2,M}\ \ldots f_{N,M}]^\top$. In the notations above and in what follows, we make explicit the matrix size of various zero matrices; for instance, the notation $0_{p,q}$ denotes a zero matrix with $p$ rows and $q$ columns, and $0_p$ denotes the zero $p\times p$ matrix.  We have
\[
f_{0,M}=-\mathcal{D}_{0,M}^{-1}\mathcal{A}_{0,M}\widetilde{f}_{0,M}+\mathcal{D}_{0,M}^{-1}\rhs_{0,M},
\]
and hence
\[
(\mathcal{C}_{0,M}-\mathcal{B}_{0,M}\mathcal{D}_{0,M}^{-1}\mathcal{A}_{0,M})\widetilde{f}_{0,M}=\rhs_{1,M},\ \rhs_{1,M}:=-\mathcal{B}_{0,M}\mathcal{D}_{0,M}^{-1}\rhs_{0,M},
\]
which can be written in expanded form using formula~\eqref{inv_matrix_explicit} to compute the inverse of $\mathcal{D}_{0,M}$:
\begin{eqnarray*}
  \begin{bmatrix}\mathcal{D}_{1,M}& \mathcal{A}_{1,M}\\\mathcal{B}_{1,M} & \mathcal{C}_{1,M} \end{bmatrix}\begin{bmatrix}f_{1,M}\\\widetilde{f}_{1,M}\end{bmatrix}&=&\begin{bmatrix}\rhs_{1,M}\\0_{2(N-1)M,1}\end{bmatrix}\\
    \rhs_{1,M}&=&-\mathcal{B}_{0,M}\mathcal{D}_{0,M}^{-1}\rhs_{0,M}\\
    \mathcal{D}_{1,M}&=&\begin{bmatrix}I_M&\mathcal{S}^{2}_{1,1,M}\\\mathcal{S}^{top}_{1,M} & I_M\end{bmatrix}\\
      \mathcal{S}^{top}_{1,M}&=&\mathcal{S}^1_{1,0,M}(I_M-\mathcal{S}^0_M\mathcal{S}^1_{0,0,M})^{-1}\mathcal{S}^0_{M}\mathcal{S}^1_{0,1,M}+\mathcal{S}^1_{1,1,M}\\
        \mathcal{A}_{1,M}&=&\begin{bmatrix}\mathcal{S}^{2}_{1,2,M}& 0_M & 0_{M,2(N-2)M}\\ 0_M & 0_M & 0_{M,2(N-2)M}\end{bmatrix}\\
        \mathcal{B}_{1,M}&=&\begin{bmatrix}0_M & 0_M\\ 0_M & \mathcal{S}^2_{2,1,M}\\ 0_{2(N-2)M,M} & 0_{2(N-2)M,M}\end{bmatrix}\\
        \mathcal{C}_{1,M}&=&\begin{bmatrix} I_M & \mathcal{S}^3_{2,2,M}& \cdots \\ \mathcal{S}^2_{2,2,M} & I_M & \cdots\\ \cdots & \cdots &\cdots \end{bmatrix},
\end{eqnarray*}
where $\widetilde{f}_{1,M}=[f_{2,M}\ f_{3,M}\ \ldots f_{N,M}]^\top$.

{\em Stage j: elimination of the unknowns $f_{j-1,M}$}. Repeating the same steps outlined above we continue the elimination process until we arrive at the following linear system 
\begin{eqnarray*}
  \begin{bmatrix}\mathcal{D}_{j-1,M}& \mathcal{A}_{j-1,M}\\\mathcal{B}_{j-1,M} & \mathcal{C}_{j-1,M} \end{bmatrix}\begin{bmatrix}f_{j-1,M}\\\widetilde{f}_{j-1,M}\end{bmatrix}&=&\begin{bmatrix}\rhs_{j-1,M}\\0_{2(N-j+1)M,1}\end{bmatrix}\\
    \mathcal{D}_{j-1,M}&=&\begin{bmatrix}I_M&\mathcal{S}^{j}_{j-1,j-1,M}\\\mathcal{S}^{top}_{j-1,M} & I_M\end{bmatrix}\\
        \mathcal{A}_{j-1,M}&=&\begin{bmatrix}\mathcal{S}^{j}_{j-1,j,M}& 0_M & 0_{M,2(N-j)M}\\ 0_M & 0_M & 0_{M,2(N-j)M}\end{bmatrix}\\
        \mathcal{B}_{j-1,M}&=&\begin{bmatrix}0_M & 0_M\\ 0_M & \mathcal{S}^j_{j,j-1,M}\\ 0_{2(N-j)M,M} & 0_{2(N-j)M,M}\end{bmatrix}\\
        \mathcal{C}_{j-1,M}&=&\begin{bmatrix} I_M & \mathcal{S}^{j+1}_{j,j,M}& \cdots \\ \mathcal{S}^j_{j,j,M} & I_M & \cdots\\ \cdots & \cdots &\cdots \end{bmatrix},
\end{eqnarray*}
where $\widetilde{f}_{j-1,M}=[f_{j,M}\ f_{j+1,M}\ \ldots f_{N,M}]^\top$. We proceed by eliminating the unknowns $f_{j-1,M}$ from the linear system above. First, we have that
\[
f_{j-1,M}=-\mathcal{D}_{j-1,M}^{-1}A_{j-1,M}\widetilde{f}_{j-1,M}+\mathcal{D}_{j-1,M}^{-1}\rhs_{j-1,M}
\]
and thus
\[
(\mathcal{C}_{j-1,M}-\mathcal{B}_{j-1,M}\mathcal{D}_{j-1,M}^{-1}A_{j-1,M})\widetilde{f}_{j-1,M}=\rhs_{j,M}, \rhs_{j,M}:=-\mathcal{B}_{j-1,M}\mathcal{D}_{j-1,M}^{-1}\rhs_{j-1,M}.
\]
The last linear system can be in turn written in expanded form if we make use of formula~\eqref{inv_matrix_explicit} to compute the inverse of $\mathcal{D}_{j-1,M}$,
\begin{eqnarray*}
  \begin{bmatrix}\mathcal{D}_{j,M}& \mathcal{A}_{j,M}\\\mathcal{B}_{j,M} & \mathcal{C}_{j,M} \end{bmatrix}\begin{bmatrix}f_{j,M}\\\widetilde{f}_{j,M}\end{bmatrix}&=&\begin{bmatrix}\rhs_{j,M}\\0_{2(N-j)M,1}\end{bmatrix}\\
    \rhs_{j,M}&=&-\mathcal{B}_{j-1,M}\mathcal{D}_{j-1,M}^{-1}\rhs_{j-1,M}\\
    \mathcal{D}_{j,M}&=&\begin{bmatrix}I_M&\mathcal{S}^{j+1}_{j,j,M}\\\mathcal{S}^{top}_{j,M} & I_M\end{bmatrix}\\
      \mathcal{S}^{top}_{j,M}&=&\mathcal{S}^j_{j,j-1,M}(I_M-\mathcal{S}^{top}_{j-1,M}\mathcal{S}^j_{j-1,j-1,M})^{-1}\mathcal{S}^{top}_{j-1,M}\mathcal{S}^j_{j-1,j,M}+\mathcal{S}^j_{j,j,M}\\
        \mathcal{A}_{j,M}&=&\begin{bmatrix}\mathcal{S}^{j+1}_{j,j+1,M}& 0_M & 0_{M,2(N-j-1)M}\\ 0_M & 0_M & 0_{M,2(N-j-1)M}\end{bmatrix}\\
        \mathcal{B}_{j,M}&=&\begin{bmatrix}0_M & 0_M\\ 0_M & \mathcal{S}^{j+1}_{j+1,j,M}\\ 0_{2(N-j-1)M,M} & 0_{2(N-j-1)M,M}\end{bmatrix}\\
        \mathcal{C}_{j,M}&=&\begin{bmatrix} I_M & \mathcal{S}^{j+2}_{j+1,j+1,M}& \cdots \\ \mathcal{S}^{j+1}_{j+1,j+1,M} & I_M & \cdots\\ \cdots & \cdots &\cdots \end{bmatrix},
\end{eqnarray*}
where $\widetilde{f}_{j,M}=[f_{j+1,M}\ f_{j+2,M}\ \ldots f_{N,M}]^\top$.

The algorithm ends with a linear system involving only the Robin data $f_{N,M}$ corresponding to the last interface $\Gamma_N$,
\begin{equation}\label{eq:last_eq}
  \begin{bmatrix}I_M & \mathcal{S}^{N+1}_M\\ \mathcal{S}^{top}_{N,M} & I_M\end{bmatrix}f_{N,M}=\rhs_{N,M},\quad  \rhs_{N,M}:=-\mathcal{B}_{N-1,M}\mathcal{D}_{N-1,M}^{-1}\rhs_{N-1,M},
  \end{equation}
which is solved using again formula~\eqref{inv_matrix_explicit} and LU factorizations. Once the discretized Robin data $f_{N,M}$ is computed, all the other discretized Robin data corresponding to the interfaces $\Gamma_j,0\leq j<N$ are computed using backward substitution via the recursions
\[
f_{j-1,M}=-\mathcal{D}_{j-1,M}^{-1}A_{j-1,M}\widetilde{f}_{j-1,M}+\mathcal{D}_{j-1,M}^{-1}\rhs_{j-1,M},1\leq j\leq N,
\]
where we recall that $\widetilde{f}_{j-1,M}=[f_{j,M}\ f_{j+1,M}\ \ldots f_{N,M}]^\top$. In order to streamline the recursions above, we store in the elimination process the quantities $\mathcal{D}_{j-1,M}^{-1}\rhs_{j-1,M}$ and the non-zero blocks of the matrices $\mathcal{D}_{j-1,M}^{-1}A_{j-1,M}$ (for each index $j$ there are only two $M\times M$ such blocks). Thus, the storage require to perform the elimination algorithm followed by the backward substitution process is $\mathcal{O}(2NM^2)$.

We first note that the elimination process presented above consists of $N\!+\!1$ steps, each step requiring (a) the calculation of the Nystr\"om matrices $\mathcal{S}^j_M$ whose cost is $\mathcal{O}(M^3)$, as well as (b) the inversion of a $M\!\times\!M$ matrix for the calculation of $\mathcal{D}^{-1}_{j,M}$ per formula~\eqref{inv_matrix_explicit}, which leads to a total computational cost proportional to $(N+1)M^3$. One drawback of the elimination algorithm presented above is that it is sequential: essentially the Robin data is peeled off layer by layer from the DDM linear system. We are exploring alternative strategies based on hierarchical RtR matrices mergings that could lead to efficient parallelization strategies~\cite{pedneault2017schur}.  Of course, for a large numbers of layers, iterative solvers such as GMRES could also provide an alternative strategy for the solution of the DDM linear system. However, large numbers of layers/subdomains produce large numbers of DDM iterations~\cite{jerez2017multitrace} if classical Robin conditions are used on subdomain interfaces.

\section{Analysis of the Schur complement elimination algorithm\label{schur}}

A natural question is why the elimination procedure described in Section~\ref{nystrom} does not break down. This issue has been explored in a different context in~\cite{pedneault2017schur}, where the elimination process was shown to be equivalent to merging of RtR maps. Indeed, the matrices $\mathcal{S}^{top}_{j,M}$ are themselves Nystr\"om  discretization matrices of the RtR operators
\begin{equation}\label{RtRint}
   \mathcal{S}^{top,j}(\psi_j):=(\gamma_j\partial_{n_j} u+ i\eta\  u)|_{\Gamma_j},
 \end{equation}
 where $u$ is the solution of the following well posed problem:
 \begin{eqnarray*}
   \Delta u+k(x)^2u&=&0\ {\rm in}\ \cup_{\ell=0}^j\Omega_\ell,\quad k(x)=k_\ell,\ x\in\Omega_\ell,\ 0\leq l\leq j\\
  \gamma_j\partial_{n_j} u+ i\eta\  u &=&\psi_j\ {\rm on}\ \Gamma_j,
 \end{eqnarray*}
 with (i) $u$ and $\gamma_\ell\partial_{n_\ell}u$ continuous across $\Gamma_{\ell}$ for $0\leq l<j$; and (ii) $u$ radiating in $\Omega_0$. Thus, the recurrence formula
 \begin{equation}\label{eq:merge}
   \mathcal{S}^{top}_{j,M}=\mathcal{S}^j_{j,j-1,M}(I_M-\mathcal{S}^{top}_{j-1,M}\mathcal{S}^j_{j-1,j-1,M})^{-1}\mathcal{S}^{top}_{j-1,M}\mathcal{S}^j_{j-1,j,M}+\mathcal{S}^j_{j,j,M}, \quad \text{for }\; 1\leq j,
 \end{equation}
 where $\mathcal{S}^{top}_{0,M}:=\mathcal{S}^0_M$, can be viewed as a means to compute Nystr\"om discretization of the RtR map $\mathcal{S}^{top,j}$ defined in equation~\eqref{RtRint} via recursive merging of
 the Nystr\"om discretization matrices of the RtR maps $\mathcal{S}^\ell$. We will establish in this section the invertibility of the operators $I-\mathcal{S}^{top}_{j-1}\mathcal{S}^j_{j-1,j-1}$ in appropriate functional spaces. 
 
The matrices $I_M-\mathcal{S}^{top}_{j-1,M}\mathcal{S}^j_{j-1,j-1,M}$ in~\eqref{eq:merge} are high-order Nystr\"om discretizations of the operators $I-\mathcal{S}^{top}_{j-1}\mathcal{S}^j_{j-1,j-1}$. As such, the invertibility of the former matrices is a consequence of the invertibility of the latter operators owing to the fact that  $\mathcal{S}^{top}_{j-1,M}$ and $\mathcal{S}^j_{j-1,j-1,M}$ can be shown to converge in strong operator norms to $\mathcal{S}^{top}_{j-1}$ and $\mathcal{S}^j_{j-1,j-1}$ as $M\to\infty$~\cite{KressColton}.

 We start by analyzing the invertibility of the operators $I-\mathcal{S}^0\mathcal{S}^{1}_{0,0}$, on which hinges Stage 1 of the elimination algorithm. We will make use of the quasi-periodic Sobolev spaces $H^s_{per}(\Gamma)$ of $\alpha$-quasi-periodic distributions defined on a generic interface $\Gamma$ that is the graph of a periodic function of period $d$. These spaces can be defined in terms of Fourier series. We use the mapping properties of boundary-integral operators associated with quasi-periodic Green functions $G^q_k$ and shifted quasi-periodic Green functions $G_{k,h}^{q,j}$. Since both of these functions have the same singularity as the free space Green function $G_k$ corresponding to the same wavenumber $k$, the mapping properties of the boundary-integral operators associated with quasi-periodic Green functions can be easily derived by simply translating to the periodic setting the classical mapping properties of boundary-integral operators whose domain of integration is a closed curve in $\mathbb{R}^2$. The invertibility of the operator  $I-\mathcal{S}^0\mathcal{S}^{1}_{0,0}$ is established via Fredholm arguments, and it relies on the explicit representations of the operators $\mathcal{S}^0$ and $\mathcal{S}^1$ derived in equations~\eqref{eq:calc_S0} and~\eqref{eq:S_layer_j} in the case when neither $k_0$ nor $k_1$ are Wood frequencies. In the cases when $k_0$ or $k_1$ are Wood frequencies, representations~\eqref{eq:calc_S0_h} and that given in Theorem~\ref{inv_layer_Wood} ought to be used.  Nevertheless, given that the shifted quasi-periodic Green functions have the same singularities as the quasi-periodic Green functions, in the computational domains under considerations, the arguments in the proof of the result below essentially do not change in the Wood-frequency case.  In what follows, we suppose that (1) $k_j<k_{j+1}$ for all $j$ and (2) $\gamma_j=1$ for all $j$ (we will show in Appendix~\ref{wp_proof} that these two assumptions guarantee the well-posedness of the transmission problem under consideration). We establish
 \begin{theorem}\label{Fredholm}
   The operator $I-\mathcal{S}^0\mathcal{S}^{1}_{0,0}:H^{-1/2}_{per}(\Gamma_0)\to H^{1/2}_{per}(\Gamma_0)$ is Fredholm of index 0 under the assumption that $\Gamma_0$ is $C^2$. 
   \end{theorem}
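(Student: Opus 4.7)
The plan is to reduce $I-\mathcal{S}^0\mathcal{S}^1_{0,0}$ to the form $c\,S^q_{\Gamma_0,k_0}+K$, where $c\neq 0$ is a scalar, $S^q_{\Gamma_0,k_0}:H^{-1/2}_{per}(\Gamma_0)\to H^{1/2}_{per}(\Gamma_0)$ is the quasi-periodic single-layer operator, and $K$ is a compact remainder. Since the single-layer operator on a $C^2$ closed curve is a classical pseudodifferential operator of order $-1$ that is Fredholm of index zero, and since the Fredholm index is stable under compact perturbation, this will give the stated conclusion.

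To extract the principal (non-compact) parts, I start from formula~\eqref{eq:calc_S0} and write $\mathcal{A}_0=\tfrac12 I+R_0$ with $R_0=(K^q_{\Gamma_0,k_0})^\top-Z_0 S^q_{\Gamma_0,k_0}$. Since $\Gamma_0$ is $C^2$, the operator $R_0$ is compact on $H^{-1/2}_{per}(\Gamma_0)$, so the Neumann-type identity $(\tfrac12 I+R_0)^{-1}=2I+2R_0\mathcal{A}_0^{-1}\cdot(-1)$ yields $\mathcal{A}_0^{-1}=2I+T_0$ with $T_0$ compact. Consequently
\begin{equation*}
\mathcal{S}^0-I = 4Z_0\,S^q_{\Gamma_0,k_0}+K_0,
\end{equation*}
where $K_0:H^{-1/2}_{per}(\Gamma_0)\to H^{1/2}_{per}(\Gamma_0)$ is compact (a bounded single layer composed with a compact operator). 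The same procedure applied to the $2\times 2$ block formula~\eqref{eq:S_layer_j} for $\mathcal{S}^1$ yields $\mathcal{S}^1_{0,0}-I=4Z_1\,S^q_{\Gamma_0,k_1}+K_1$; this uses crucially that the off-diagonal blocks in~\eqref{eq:S_layer_j} have $C^\infty$ kernels (because $\Gamma_0$ and $\Gamma_1$ are disjoint) and hence contribute only to the compact part of the block inverse.

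Combining via
\begin{equation*}
I-\mathcal{S}^0\mathcal{S}^1_{0,0}=-(\mathcal{S}^0-I)-(\mathcal{S}^1_{0,0}-I)-(\mathcal{S}^0-I)(\mathcal{S}^1_{0,0}-I),
\end{equation*}
the cross term maps $H^{-1/2}_{per}(\Gamma_0)$ into $H^{3/2}_{per}(\Gamma_0)$ (two successive order-$-1$ smoothings), hence is compact into $H^{1/2}_{per}(\Gamma_0)$ by the compact embedding $H^{3/2}\hookrightarrow H^{1/2}$. Moreover the wavenumber difference $S^q_{\Gamma_0,k_1}-S^q_{\Gamma_0,k_0}$ has a $C^\infty$ kernel (the Hankel singularities cancel, and the remaining lattice sum is smooth on the diagonal), so it too is compact. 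Collecting these facts yields
\begin{equation*}
I-\mathcal{S}^0\mathcal{S}^1_{0,0} = -4(Z_0+Z_1)\,S^q_{\Gamma_0,k_0}+K,
\end{equation*}
with $K$ compact and $-4(Z_0+Z_1)=-4i\eta(\gamma_0^{-1}+\gamma_1^{-1})\neq 0$. The conclusion then follows from the Fredholm property of the quasi-periodic single-layer operator on $\Gamma_0$, which is inherited from the classical free-space case because $G^q_{k_0}-G_{k_0}$ is $C^\infty$, giving a smooth (hence compact) correction to the free-space single layer.

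The most delicate step will be the structural computation of the second identity in the second paragraph: one must invert the $2\times 2$ block boundary-integral operator from~\eqref{eq:S_layer_j} in the appropriate Sobolev topology and identify precisely which contributions feed into the principal single-layer term and which collapse into the compact remainder, keeping careful track of the fact that boundary-integral operators whose source and target curves are disjoint contribute only $C^\infty$-kernel (compact) pieces. Once this bookkeeping is carried out, the remainder of the argument reduces to the classical Fredholm theory of the single-layer potential on a $C^2$ curve.
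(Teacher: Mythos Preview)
Your proposal is correct and follows the same structural decomposition as the paper: both extract from $\mathcal{S}^0$ and $\mathcal{S}^1_{0,0}$ a principal part of the form $I+4Z\,S^q+\text{(more regularizing)}$, combine, and reduce $I-\mathcal{S}^0\mathcal{S}^1_{0,0}$ to a nonzero multiple of a single-layer operator plus a compact remainder. The difference is only in the closing Fredholm step. The paper does not invoke the Fredholm property of $S^q_{\Gamma_0,k_0}$ directly; instead it replaces $S^q_{\Gamma_0,k_0}$ by $S^q_{\Gamma_0,k_0+i\varepsilon_0}$ (the difference being two orders smoothing) and then uses the coercivity $\Im(S^q_{\Gamma_0,k_0+i\varepsilon_0}\varphi,\bar\varphi)\ge c_0\|\varphi\|_{H^{-1/2}}^2$ to obtain a G\aa rding inequality, from which the Fredholm-index-zero property follows. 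Your route---citing that the single-layer is an elliptic order~$-1$ pseudodifferential operator on the periodized curve, hence Fredholm of index zero, and that $G^q_{k_0}-G_{k_0}$ contributes only a smooth (compact) correction---is an equally valid and arguably more economical way to finish; it avoids introducing the auxiliary complex wavenumber at the cost of importing the ellipticity/index statement for $S^q$ from elsewhere. The paper's choice buys an explicit lower bound that may be reused downstream, whereas yours keeps the argument self-contained at the level of compact-perturbation bookkeeping. Your constant $-4(Z_0+Z_1)$ also agrees with the paper's $-8Z_0$ once the standing hypothesis $\gamma_j=1$ (hence $Z_0=Z_1$) is applied.
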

 \begin{proof}
   Let us assume that $k_0$ is not a Wood frequency. We first establish that, given the representation
   \[
   \mathcal{S}^0=I+2Z_0S_{\Gamma_0,k_0}^q\left(\frac{1}{2}I+(K_{\Gamma_0,k_0}^q)^\top-Z_0S_{\Gamma_0,k_0}^q\right)^{-1},
   \]
 the operator $\mathcal{S}^0$ can be expressed in the form
   \begin{equation}\label{eq:f_repr}
     \mathcal{S}^0=I+4Z_0S_{\Gamma_0,k_0+i\varepsilon_0}^q+\mathcal{T}_0,\quad \mathcal{T}_0:H^{-1/2}_{per}(\Gamma_0)\to H_{per}^{3/2}(\Gamma_0),\quad \varepsilon_0>0,\quad Z_0=i\eta.
   \end{equation}
   The decomposition in equation~\eqref{eq:f_repr} can be achieved if we choose the operator $\mathcal{T}_0$ in the following manner:
   \[
   \mathcal{T}_0=4Z_0(S_{\Gamma_0,k_0}^q-S_{\Gamma_0,k_0+i\varepsilon_0}^q)+8Z_0S_{\Gamma_0,k_0}^q\left(\frac{1}{2}I+2(K_{\Gamma_0,k_0}^q)^\top-2Z_0S_{\Gamma_0,k_0}^q\right)^{-1}((K_{\Gamma_0,k_0}^q)^\top-Z_0S_{\Gamma_0,k_0}^q).
   \]
   Now, given that $S_{\Gamma_0,k_0}^q-S_{\Gamma_0,k_0+i\varepsilon_0}:H^{-1/2}_{per}(\Gamma_0)\to H_{per}^{3/2}(\Gamma_0)$, $S_{\Gamma_0,k_0}^q:H^{s}_{per}(\Gamma_0)\to H_{per}^{s+1}(\Gamma_0)$ for $-1/2\leq s\leq 1$, and $(K_{\Gamma_0,k_0}^q)^\top:H^{s}_{per}(\Gamma_0)\to H_{per}^{s+2}(\Gamma_0)$ for $-1/2\leq s\leq 0$, it follows  that $\mathcal{T}_0:H^{-1/2}_{per}(\Gamma_0)\to H_{per}^{3/2}(\Gamma_0)$. The role of the complex wavenumber $k_0+i\varepsilon_0$ in the definition of the quasi-periodic single-layer operators in equation~\eqref{eq:f_repr} will be made clear in what follows.

   We establish next a decomposition of the operator $\mathcal{S}^1_{0,0}$ similar in spirit to that in equation~\eqref{eq:f_repr}. To this end, we revisit representation~\eqref{eq:S_layer_j}, valid for a general periodic layer and a wavenumber~$k$ that is not a Wood frequency. Let us define 
   \[
  \mathcal{A}:= \begin{bmatrix}(1/2)I+(K^q_{k,t,t})^\top +Z S^q_{k,t,t} & (K^q_{k,b,t})^\top+Z S^q_{k,b,t}\\ (K^q_{k,t,b})^\top+Z S^q_{k,t,b} & (1/2)I+(K^q_{k,b,b})^\top +Z S^q_{k,b,b}\end{bmatrix}=\begin{bmatrix}\mathcal{A}_{t,t} & \mathcal{A}_{t,b}\\ \mathcal{A}_{b,t} & \mathcal{A}_{b,b}\end{bmatrix}.
  \]
  We have
  \[
  \mathcal{A}^{-1}=\begin{bmatrix}\mathcal{A}_{t,t}^{-1}+\mathcal{A}_{t,t}^{-1}\mathcal{A}_{t,b}\mathcal{D}\mathcal{A}_{b,t}\mathcal{A}_{t,t}^{-1} & -\mathcal{A}_{t,t}^{-1}\mathcal{A}_{t_b}\mathcal{D}\\ -\mathcal{D}\mathcal{A}_{b,t}\mathcal{A}_{t,t}^{-1} & \mathcal{D}\end{bmatrix}=\begin{bmatrix}\widetilde{\mathcal{A}}_{t,t} & \widetilde{\mathcal{A}}_{t,b}\\ \widetilde{\mathcal{A}}_{b,t} & \widetilde{\mathcal{A}}_{b,b}\end{bmatrix}
  \]
  where
  \[
  \mathcal{D}=(\mathcal{A}_{b,b}-\mathcal{A}_{b,t}\mathcal{A}_{t,t}^{-1}\mathcal{A}_{t,b})^{-1}.
  \]
  The invertibility of the operator $\mathcal{A}_{t,t}$ needed in formulas above can be established similarly to the result in Theorem~\ref{well_p_0}.  The invertibility of the operator $\mathcal{D}$, in turn, can be seen to be equivalent to the invertibility of the matrix operator $\mathcal{A}$. Given that the kernels of the boundary-integral operators that enter in the definition of $\mathcal{A}_{t,b}$ and $\mathcal{A}_{b,t}$ are regular, we have $\mathcal{A}_{t,b}:H^s_{per}(\Gamma_t)\to H_{per}^{s+2}(\Gamma_b)$ and $\mathcal{A}_{b,t}:H^s_{per}(\Gamma_b)\to H_{per}^{s+2}(\Gamma_t)$.   Since
  \[
  \mathcal{S}=\begin{bmatrix}I &0\\0& I\end{bmatrix}+2Z\begin{bmatrix}S^q_{k,t,t} & S^q_{k,b,t}\\ S^q_{k,t,b} & S^q_{k,b,b}\end{bmatrix}\mathcal{A}^{-1}=\begin{bmatrix}\mathcal{S}_{t,t} & \mathcal{S}_{t,b}\\ \mathcal{S}_{b,t} & \mathcal{S}_{b,b}\end{bmatrix},
  \]
  we have
  \begin{align}\label{eq:s_repr}
  \mathcal{S}_{t,t}&=I+2ZS^q_{k,t,t}\widetilde{\mathcal{A}}_{t,t}+2ZS^q_{k,b,t}\widetilde{\mathcal{A}}_{b,t}=I+4ZS^q_{k_0+i\varepsilon_0,t,t}+\widetilde{\mathcal{S}_{t,t}}, \\
  \widetilde{\mathcal{S}_{t,t}}&:H^{-1/2}_{per}(\Gamma_t)\to H_{per}^{3/2}(\Gamma_t)
  \end{align}
taking into account the fact that $S^q_{k,b,t}:H^s_{per}(\Gamma_b)\to H_{per}^{s+2}(\Gamma_t)$. In addition, we obtain
 \begin{align}\label{eq:s_repr_b}
  \mathcal{S}_{b,b}&=I+2ZS^q_{k,t,b}\widetilde{\mathcal{A}}_{t,b}+2ZS^q_{k,b,b}\widetilde{\mathcal{A}}_{b,b}=I+4ZS^q_{k_0+i\varepsilon_0,b,b}+\widetilde{\mathcal{S}_{b,b}}, \\
   \widetilde{\mathcal{S}_{b,b}}&:H^{-1/2}_{per}(\Gamma_b)\to H_{per}^{3/2}(\Gamma_b)
 \end{align}
 as well as the following smoothing  properties of the cross operators $\mathcal{S}_{t,b}$ and $\mathcal{S}_{b,t}$,
 \begin{equation}\label{eq:map_cross}
   \mathcal{S}_{t,b}:H^{-1/2}_{per}(\Gamma_t)\to H_{per}^{3/2}(\Gamma_b),\qquad \mathcal{S}_{b,t}:H^{-1/2}_{per}(\Gamma_b)\to H_{per}^{3/2}(\Gamma_t).
   \end{equation}
Combining the results in~\eqref{eq:f_repr} and~\eqref{eq:s_repr}, we obtain
  \[
  I-\mathcal{S}^0\mathcal{S}^1_{0,0}=-8Z_0S_{\Gamma_0,k_0+i\varepsilon_0}^q+\mathcal{T}_{0,1},\qquad \mathcal{T}_{0,1}:H^{-1/2}_{per}(\Gamma_0)\to H_{per}^{3/2}(\Gamma_0).
  \]
  Classical arguments~\cite{turc2} can be adapted to the periodic setting to establish
  \[
  \Im (S_{\Gamma_0,k_0+i\varepsilon_0}^q\varphi,\overline{\varphi})\geq c_0\|\varphi\|^2_{H^{-1/2}_{per}(\Gamma_0)},\ c_0>0,
  \]
  from which we obtain
  \[
  \Re (-8Z_0S_{\Gamma_0,k_0+i\varepsilon_0}^q\varphi,\overline{\varphi})\geq  8\eta c_0\|\varphi\|^2_{H^{-1/2}_{per}(\Gamma_0)},\ c_0>0, \eta>0.
  \]
  Finally, given that $\mathcal{T}_{0,1}:H^{-1/2}_{per}(\Gamma_0)\to H_{per}^{3/2}(\Gamma_0)$, it follows that the operator $\mathcal{T}_{0,1}:H^{-1/2}_{per}(\Gamma_0)\to H_{per}^{1/2}(\Gamma_0)$ is compact, and thus the operator $I-\mathcal{S}^0\mathcal{S}^1_{0,0}:H^{-1/2}_{per}(\Gamma_0)\to H_{per}^{1/2}(\Gamma_0)$ can be seen to satisfy a G\aa rding inequality. The result of the Theorem is thus established.  The case when $k_0$ and/or $k_1$ are Wood frequencies can be treated similarly. 
 \end{proof}

 We are now in the position to prove
 \begin{theorem}\label{inv}
   The operator $I-\mathcal{S}^0\mathcal{S}^{1}_{0,0}:H^{-1/2}_{per}(\Gamma_0)\to H^{1/2}_{per}(\Gamma_0)$ is invertible with continuous inverse.
 \end{theorem}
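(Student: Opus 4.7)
The plan is to invoke Theorem~\ref{Fredholm}, which establishes that $I-\mathcal{S}^0\mathcal{S}^1_{0,0}:H^{-1/2}_{per}(\Gamma_0)\to H^{1/2}_{per}(\Gamma_0)$ is Fredholm of index zero, and thereby reduce the proof of invertibility to the verification that this operator is injective.

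Suppose $\psi\in H^{-1/2}_{per}(\Gamma_0)$ satisfies $\psi=\mathcal{S}^0\mathcal{S}^1_{0,0}\psi$, and set $\phi:=\mathcal{S}^1_{0,0}\psi$. I would associate $\psi$ with two Helmholtz solutions: first, $w_1\in H^1(\Omega_1^{per})$, the $\alpha$-quasi-periodic solution in $\Omega_1^{per}$ whose interior Robin data are $\gamma_1\partial_{n_1}w_1-i\eta w_1=\psi$ on $\Gamma_0$ and $\gamma_1\partial_{n_1}w_1-i\eta w_1=0$ on $\Gamma_1$; by the definitions in equations~\eqref{RtRboxj_t}--\eqref{eq:block}, the outgoing Robin trace on $\Gamma_0$ is exactly $\phi=\gamma_1\partial_{n_1}w_1+i\eta w_1$. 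Second, $w_0\in H^1_{\mathrm{loc}}(\Omega_0^{per})$, the outgoing $\alpha$-quasi-periodic solution in $\Omega_0^{per}$ with interior Robin data $\gamma_0\partial_{n_0}w_0-i\eta w_0=\phi$ on $\Gamma_0$, whose outgoing Robin trace $\gamma_0\partial_{n_0}w_0+i\eta w_0$ equals $\mathcal{S}^0\phi=\psi$. Solving the resulting four-equation linear system on $\Gamma_0$ for the Robin traces yields the matching conditions $\gamma_0\partial_{n_0}w_0=\gamma_1\partial_{n_1}w_1$ and $w_0=-w_1$ on $\Gamma_0$.

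The central step is an energy identity. Applying Green's first identity to $w_0$ in the truncated strip $\Omega_{0,h}^{per}$ weighted by $\gamma_0$, and to $w_1$ in $\Omega_1^{per}$ weighted by $\gamma_1$, and summing, the matching conditions make the contributions on $\Gamma_0$ cancel; the quasi-periodic side walls cancel automatically; the Robin condition on $\Gamma_1$ contributes $i\eta\int_{\Gamma_1}|w_1|^2\,ds$; and the top boundary $\{x_2=h\}$ contributes a Rayleigh series whose imaginary part, independent of $h$, equals $\gamma_0 d\sum_{\beta_{0,r}>0}\beta_{0,r}|C_r^+|^2$ exactly as in the calculation of Section~\ref{S0}. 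Since the left-hand side is real while the right-hand side is purely imaginary with two non-negative contributions, taking imaginary parts forces
\begin{equation*}
\sum_{\beta_{0,r}>0}\beta_{0,r}|C_r^+|^2=0
\quad\text{and}\quad
\int_{\Gamma_1}|w_1|^2\,ds=0,
\end{equation*}
so $w_1|_{\Gamma_1}=0$, and the zero Robin condition on $\Gamma_1$ then gives $\partial_{n_1}w_1|_{\Gamma_1}=0$ as well. Holmgren's uniqueness theorem applied across the $C^2$ curve $\Gamma_1$ implies $w_1\equiv 0$ in $\Omega_1^{per}$, whence $\psi=(\gamma_1\partial_{n_1}w_1-i\eta w_1)|_{\Gamma_0}=0$, establishing injectivity.

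The most delicate step is likely the careful sign-tracking that produces the matching conditions $w_0=-w_1$ and $\gamma_0\partial_{n_0}w_0=\gamma_1\partial_{n_1}w_1$ on $\Gamma_0$, owing to the opposite orientations $n_0=-n_1$ and to the Robin sign conventions built into the definitions of the individual RtR maps; the minus sign in $w_0=-w_1$ is exactly what makes the $\Gamma_0$ boundary terms cancel in the combined energy identity. A minor technical point is that the energy identity is phrased in terms of $H^{1/2}$--$H^{-1/2}$ duality pairings on $\Gamma_0$ and $\Gamma_1$, which are well defined because the Robin data $\psi$ and $\phi$ belong to $H^{-1/2}_{per}$ and the associated Helmholtz solutions belong to $H^1$; once this is granted, the positivity of $\eta$ and the non-negativity of the Rayleigh contribution at infinity combine to close the argument via Holmgren's theorem.
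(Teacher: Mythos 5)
Your proposal is correct and follows essentially the same route as the paper's proof: Fredholm reduction to injectivity via Theorem~\ref{Fredholm}, derivation of the matching conditions $w_0=-w_1$ and $\gamma_0\partial_{n_0}w_0=\gamma_1\partial_{n_1}w_1$ on $\Gamma_0$ from the four Robin relations, and the combined energy identity whose purely imaginary right-hand side forces both the propagating Rayleigh coefficients and $w_1|_{\Gamma_1}$ to vanish, followed by Holmgren's theorem. The only (harmless) deviation is that you conclude $\psi=0$ directly from $w_1\equiv 0$ in $\Omega_1^{per}$, whereas the paper first propagates the vanishing back through $w_0$ in $\Omega_0$.
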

 \begin{proof} Owing to the Fredholm alternative, the theorem follows once we establish the injectivity of the operator $I-\mathcal{S}^0\mathcal{S}^{1}_{0,0}$. Let $\varphi\in Ker(I-\mathcal{S}^0\mathcal{S}^1_{0,0})$ and consider the following $\alpha$-quasi-periodic Helmholtz equation
  \begin{eqnarray*}
    \Delta w_1+k_1^2w_1&=&0\qquad{\rm in}\ \Omega_1^{per}\\
    \partial_{n_1}w_1-Z_0w_1&=&\varphi\qquad{\rm on}\ \Gamma_0,\\
    \partial_{n_1}w_1-Z_0w_1&=&0\qquad{\rm on}\ \Gamma_1,
  \end{eqnarray*}
  where $Z_0=i\eta$. Then
  \[
  \mathcal{S}^1_{0,0}\varphi=(\partial_{n_1}w_1+ Z_0w_1)|_{\Gamma_0}.
  \]
  Consider also the $\alpha$-quasi-periodic Helmholtz equation
  \begin{eqnarray*}
    \Delta w_0+k_0^2w_0&=&0\qquad{\rm in}\ \Omega_0\\
    \partial_{n_0}w_0-Z_0w_0&=&\mathcal{S}^1_{0,0}\varphi\qquad{\rm on}\ \Gamma_0,
  \end{eqnarray*}
 with $w_0$ radiating.  Then using the fact that $\mathcal{S}^0\mathcal{S}^1_{0,0}\varphi=\varphi$ on $\Gamma_0$, we obtain
  \[
  \mathcal{S}^0\mathcal{S}^1_{0,0}\varphi=\partial_{n_0}w_0+Z_0w_0=\partial_{n_1}w_1-Z_0w_1\quad{\rm on}\ \Gamma_0.
  \]
  Thus, we have derived the following system of equations on~$\Gamma_0$
  \begin{eqnarray*}
    \partial_{n_0}w_0-Z_0w_0&=&\partial_{n_1}w_1+Z_0w_1\\
    \partial_{n_0}w_0+Z_0w_0&=&\partial_{n_1}w_1-Z_0w_1.
  \end{eqnarray*}
  from which we obtain
  \begin{equation}\label{eq:equality}
  w_0|_{\Gamma_0}=-w_1|_{\Gamma_0},\qquad \partial_{n_0}w_0|_{\Gamma_0}=\partial_{n_1}w_1|_{\Gamma_0}.
  \end{equation}
  Recall from the proof of Theorem~\ref{wp_Omega_0} the following identity for $w_0$:
 \[
\lim_{h\to\infty}\int_{\Omega_{0,h}^{per}}(|\nabla w_0|^2-k_0^2|w_0|^2)dx=\int_{\Gamma_0} \partial_{n_0}w_0\ \overline{w}_0\ ds+id\sum_{r\in\mathbb{Z},\ \beta_{0,r}>0}\beta_{0,r}|C_r^{+}|^2\,.
\]
On the other hand,
\[
\int_{\Omega_{1}^{per}}(|\nabla w_1|^2-k_1^2|w_1|^2)dx=\int_{\Gamma_0} \partial_{n_1}w_1\ \overline{w}_1\ ds+i\eta \int_{\Gamma_1} |w_1|^2\ ds.
\]
Adding the last two identities and taking into account equation~\eqref{eq:equality}, we derive
\[
\lim_{h\to\infty}\int_{\Omega_{0,h}^{per}}(|\nabla w_0|^2-k_0^2|w_0|^2)dx+\int_{\Omega_{1}^{per}}(|\nabla w_1|^2-k_1^2|w_1|^2)dx=id\sum_{r\in\mathbb{Z},\ \beta_{0,r}>0}\beta_{0,r}|C_r^{+}|^2+i\eta \int_{\Gamma_1} |w_1|^2\ ds.
\]
This implies that $w_1=0$ on $\Gamma_1$, and hence $w_1=0$ in $\Omega_1$ by Holmgren's theorem~\cite{Friedman2008}. Using~\eqref{eq:equality} again we obtain that $w_0=0$ and $\partial_{n_0}w_0=0$ on $\Gamma_0$, which, in turn, implies that $w_0=0$ in $\Omega_0$. From this we finally conclude that $\varphi=0$ on $\Gamma_0$.
 \end{proof}

 As a consequence of the results in Theorem~\ref{Fredholm} and Theorem~\ref{inv} we obtain 

 \begin{corollary}\label{mapp_2} The operator $\mathcal{S}^{top}_1$ defined in equation~\eqref{RtRint} can be expressed in the form
   \[
   \mathcal{S}^{top}_1=I+4Z_0S_{\Gamma_1,k_1+i\varepsilon_1}^q+\mathcal{T}_1,\quad \mathcal{T}_1:H^{-1/2}_{per}(\Gamma_1)\to H_{per}^{3/2}(\Gamma_1),\ \varepsilon_1>0.
   \]
 \end{corollary}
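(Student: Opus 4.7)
The plan is to expand $\mathcal{S}^{top}_1$ via the merging recursion~\eqref{eq:merge} at $j=1$ with $\mathcal{S}^{top}_0 = \mathcal{S}^0$, namely
\[
\mathcal{S}^{top}_1 \;=\; \mathcal{S}^1_{1,1} + \mathcal{S}^1_{1,0}\bigl(I - \mathcal{S}^0\mathcal{S}^1_{0,0}\bigr)^{-1}\mathcal{S}^0\mathcal{S}^1_{0,1},
\]
extract the principal part $I+4Z_0 S_{\Gamma_1,k_1+i\varepsilon_1}^q$ from the diagonal block $\mathcal{S}^1_{1,1}$, and show that what remains maps $H^{-1/2}_{per}(\Gamma_1)$ continuously into $H^{3/2}_{per}(\Gamma_1)$.

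First I would apply the decomposition~\eqref{eq:s_repr_b} to the layer $\Omega_1$ (whose bottom interface is $\Gamma_1$ and whose wavenumber is $k_1$) with $Z=Z_0=i\eta$, obtaining
\[
\mathcal{S}^1_{1,1} \;=\; I + 4Z_0\, S_{\Gamma_1,k_1+i\varepsilon_1}^q + \widetilde{\mathcal{S}}_{1,1},
\qquad \widetilde{\mathcal{S}}_{1,1}\colon H^{-1/2}_{per}(\Gamma_1) \to H^{3/2}_{per}(\Gamma_1).
\]
The principal terms are thus already present, and it suffices to prove that the coupling operator $\mathcal{K} := \mathcal{S}^1_{1,0}(I-\mathcal{S}^0\mathcal{S}^1_{0,0})^{-1}\mathcal{S}^0\mathcal{S}^1_{0,1}$ is also continuous from $H^{-1/2}_{per}(\Gamma_1)$ into $H^{3/2}_{per}(\Gamma_1)$; defining $\mathcal{T}_1 := \widetilde{\mathcal{S}}_{1,1} + \mathcal{K}$ then yields the claimed decomposition.

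The crux of the argument is bookkeeping Sobolev regularity through the four factors of $\mathcal{K}$. By the smoothing property~\eqref{eq:map_cross} of the cross blocks, $\mathcal{S}^1_{0,1}$ is bounded from $H^{-1/2}_{per}(\Gamma_1)$ into $H^{3/2}_{per}(\Gamma_0)$, and $\mathcal{S}^1_{1,0}$ from $H^{-1/2}_{per}(\Gamma_0)$ into $H^{3/2}_{per}(\Gamma_1)$. The representation~\eqref{eq:f_repr} exhibits $\mathcal{S}^0$ as $I + 4Z_0 S_{\Gamma_0,k_0+i\varepsilon_0}^q + \mathcal{T}_0$, each summand of which is bounded on $H^{3/2}_{per}(\Gamma_0)$: the identity trivially, the quasi-periodic single layer gains one derivative, and $\mathcal{T}_0$ already sends the larger space $H^{-1/2}_{per}(\Gamma_0) \supset H^{3/2}_{per}(\Gamma_0)$ into $H^{3/2}_{per}(\Gamma_0)$. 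Theorem~\ref{inv} supplies the continuous inverse $(I-\mathcal{S}^0\mathcal{S}^1_{0,0})^{-1}\colon H^{1/2}_{per}(\Gamma_0) \to H^{-1/2}_{per}(\Gamma_0)$. Combining these facts with the embedding $H^{3/2}_{per}(\Gamma_0) \hookrightarrow H^{1/2}_{per}(\Gamma_0)$ one reads off the chain
\[
H^{-1/2}_{per}(\Gamma_1) \xrightarrow{\mathcal{S}^1_{0,1}} H^{3/2}_{per}(\Gamma_0) \xrightarrow{\mathcal{S}^0} H^{3/2}_{per}(\Gamma_0) \hookrightarrow H^{1/2}_{per}(\Gamma_0) \xrightarrow{(I-\mathcal{S}^0\mathcal{S}^1_{0,0})^{-1}} H^{-1/2}_{per}(\Gamma_0) \xrightarrow{\mathcal{S}^1_{1,0}} H^{3/2}_{per}(\Gamma_1),
\]
delivering the desired two-derivative gain for $\mathcal{K}$.

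The only mildly delicate point is the boundedness of $\mathcal{T}_0$ on the higher Sobolev scale $H^{3/2}_{per}(\Gamma_0)$, which I would verify by inspecting the explicit expression of $\mathcal{T}_0$ given in the proof of Theorem~\ref{Fredholm} and invoking standard mapping properties of the quasi-periodic single- and adjoint double-layer operators on the $C^2$ curve $\Gamma_0$. At Wood frequencies the argument is unchanged: the shifted Green functions $G^{q,j}_{k_0,h}$ and $G^{q,j}_{k_1,h}$ share the singularity structure of their unshifted counterparts, so the decompositions~\eqref{eq:f_repr} and~\eqref{eq:s_repr_b} remain valid once $h$ is chosen compatible with the domain geometry in the sense of Theorem~\ref{inv_layer_Wood}.
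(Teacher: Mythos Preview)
Your proposal is correct and follows essentially the same route as the paper: both obtain the merging formula $\mathcal{S}^{top}_1=\mathcal{S}^1_{1,1}+\mathcal{S}^1_{1,0}(I-\mathcal{S}^0\mathcal{S}^1_{0,0})^{-1}\mathcal{S}^0\mathcal{S}^1_{0,1}$, extract the principal part from $\mathcal{S}^1_{1,1}$ via~\eqref{eq:s_repr_b}, and show the remaining term is $H^{-1/2}\to H^{3/2}$ smoothing using~\eqref{eq:map_cross} and Theorem~\ref{inv}. Your Sobolev-space bookkeeping is more explicit than the paper's terse ``the result follows''; note that the worry about $\mathcal{T}_0$ on $H^{3/2}_{per}(\Gamma_0)$ can be sidestepped by embedding into $H^{1/2}_{per}(\Gamma_0)$ one step earlier, since only $\mathcal{S}^0\colon H^{1/2}\to H^{1/2}$ is needed before applying the inverse.
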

 \begin{proof} Consider the following Helmholtz scattering problem with Robin boundary conditions on $\Gamma_1$:  Find $\alpha$-quasi-periodic solutions $u_0$ and $u_1$ such that 
   \begin{eqnarray*}
     \Delta u_0+k_0^2u&=&0\ {\rm in}\ \Omega_0\\
     \Delta u_1+k_1^2u_1&=&0\ {\rm in}\ \Omega_1\\
     u_0&=&u_1\ {\rm on}\ \Gamma_0\\
     \partial_{n_0}u_0&=&-\partial_{n_1}u_1\ {\rm on}\ \Gamma_0\\
  \gamma_1\partial_{n_1} u+ i\eta\  u_1 &=&\psi_1\ {\rm on}\ \Gamma_1,
 \end{eqnarray*}
and $u_0$ is radiating in $\Omega_0$. The arguments in the proof of Theorem~\ref{inv} can be applied to show the well-posedness of the this problem. Reformulating it in terms of matching Robin data on $\Gamma_0$ and applying the same arguments as in Section~\ref{nystrom} but at the operator (continuous) level, we obtain
   \[
   \mathcal{S}^{top}_{1}=\mathcal{S}^1_{1,0}(I-\mathcal{S}^{0}\mathcal{S}^1_{0,0})^{-1}\mathcal{S}^{0}\mathcal{S}^1_{0,1}+\mathcal{S}^1_{1,1}.
   \]
Using the representation above together with the properties recounted in equations~\eqref{eq:s_repr_b} and~\eqref{eq:s_repr_b} with $\Gamma_t=\Gamma_0$, $\Gamma_b=\Gamma_1$, and $Z=Z_0=i\eta$, the result follows.
 \end{proof}

 The procedure presented above can be repeated inductively at the operator level, and can be viewed as means to recursively merge the RtR operators $\mathcal{S}^{top}_{j-1}$ and $\mathcal{S}^j$ in order to obtain the RtR operator $\mathcal{S}^{top}_j$ according to the formula  
 \begin{equation}\label{eq:merge_C}
 \mathcal{S}^{top}_{j}=\mathcal{S}^j_{j,j-1}(I-\mathcal{S}^{top}_{j-1}\mathcal{S}^j_{j-1,j-1})^{-1}\mathcal{S}^{top}_{j-1}\mathcal{S}^j_{j-1,j}+\mathcal{S}^j_{j,j}, 2\leq j.
 \end{equation}
 Equation~\eqref{eq:merge_C} constitutes the continuous analogue of equation~\eqref{eq:merge}. The invertibility of the operators $I-\mathcal{S}^{top}_{j-1}\mathcal{S}^j_{j-1,j-1}$ for $2\leq j\leq N$ can be established similarly to the results in Theorem~\ref{Fredholm} and Theorem~\ref{inv} using the link in Corollary~\ref{mapp_2}. Indeed, it is straightforward to establish by induction that
 \[
 \mathcal{S}^{top}_{j-1}=I+4Z_0S_{\Gamma_{j-1},k_1+i\varepsilon_{j-1}}^q+\mathcal{T}_{j-1},\quad \mathcal{T}_{j-1}:H^{-1/2}_{per}(\Gamma_{j-1})\to H_{per}^{3/2}(\Gamma_{j-1}),\ \varepsilon_{j-1}>0,\ 3\leq j\leq N.
 \]
 We note that this latter representation suffices to establish the Fredholm property of the operator $I-\mathcal{S}^{top}_{j-1}\mathcal{S}^j_{j-1,j-1}$ for all $j: 3\leq j\leq N$; see Theorem~\ref{Fredholm}. The arguments in the proof of Theorem~\ref{inv} also translate almost verbatim to obtain the invertibility of the operators $I-\mathcal{S}^{top}_{j-1}\mathcal{S}^j_{j-1,j-1}$ for all $j:3\leq j\leq N$. It is the very last step in the algorithm when we merge $\mathcal{S}^{top}_N$ and $\mathcal{S}^{N+1}$ that is markedly different on account of the fact that the layer $\Omega_{N+1}$ is semi-infinite and thus the arguments in the proof of Theorem~\ref{inv} have to be modified according to those in Theorem~\ref{thm:uniqueness1} in Appendix~\ref{wp_proof}.
 \begin{theorem}\label{invLast}
   The operator $I-\mathcal{S}^{top}_N\mathcal{S}^{N+1}:H^{-1/2}_{per}(\Gamma_N)\to H^{1/2}_{per}(\Gamma_N)$ is invertible with continuous inverse.
 \end{theorem}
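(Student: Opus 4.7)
The plan is to mirror the two-step structure of Theorems~\ref{Fredholm} and~\ref{inv}: first establish that $I-\mathcal{S}^{top}_N\mathcal{S}^{N+1}$ is Fredholm of index zero by a G\aa rding inequality, and then obtain invertibility from the Fredholm alternative by proving injectivity. For the Fredholm step I would use the decomposition
\[
\mathcal{S}^{top}_N=I+4Z_0 S^{q}_{\Gamma_N,k_N+i\varepsilon_N}+\mathcal{T}^{top}_N,\qquad
\mathcal{S}^{N+1}=I+4Z_{N+1} S^{q}_{\Gamma_N,k_{N+1}+i\varepsilon'}+\mathcal{T}^{N+1}\,,
\]
where the first identity is the inductively established representation for $\mathcal{S}^{top}_N$ and the second is obtained by applying to the representation~\eqref{eq:calc_SN} (or~\eqref{eq:calc_SN_h} at Wood frequencies) the same manipulation used to derive~\eqref{eq:f_repr}; both remainder operators map $H^{-1/2}_{per}(\Gamma_N)\to H^{3/2}_{per}(\Gamma_N)$ and are hence compact into $H^{1/2}_{per}(\Gamma_N)$. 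Multiplying these out gives
\[
I-\mathcal{S}^{top}_N\mathcal{S}^{N+1}=-4Z_0 S^{q}_{\Gamma_N,k_N+i\varepsilon_N}-4Z_{N+1} S^{q}_{\Gamma_N,k_{N+1}+i\varepsilon'}+\mathcal{K}\,,
\]
with $\mathcal{K}$ compact, and the coercivity estimate $\mathrm{Im}\,(S^{q}_{k+i\varepsilon}\varphi,\overline{\varphi})\geq c\|\varphi\|^2_{H^{-1/2}_{per}}$ used in Theorem~\ref{Fredholm} then furnishes a G\aa rding inequality.

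For injectivity, let $\varphi\in\ker(I-\mathcal{S}^{top}_N\mathcal{S}^{N+1})$. I would introduce two auxiliary fields. First, let $w_{N+1}$ be the radiating $\alpha$-quasi-periodic solution of the Helmholtz equation in $\Omega_{N+1}^{per}$ with the Robin datum $\gamma_{N+1}\partial_{n_{N+1}}w_{N+1}-i\eta\,w_{N+1}=\varphi$ on $\Gamma_N$, so that $\mathcal{S}^{N+1}\varphi=(\gamma_{N+1}\partial_{n_{N+1}}w_{N+1}+i\eta\,w_{N+1})|_{\Gamma_N}$. Second, let $u$ be the solution of the composite transmission problem in $\cup_{\ell=0}^{N}\Omega_\ell^{per}$ with wavenumbers $k_\ell$, continuous Cauchy data $u$, $\gamma_\ell\partial_{n_\ell}u$ across the interior interfaces, radiation in $\Omega_0$, and Robin datum $\gamma_N\partial_{n_N}u-i\eta\,u=\mathcal{S}^{N+1}\varphi$ on $\Gamma_N$; by definition of $\mathcal{S}^{top}_N$, the relation $\mathcal{S}^{top}_N\mathcal{S}^{N+1}\varphi=\varphi$ translates into $\gamma_N\partial_{n_N}u+i\eta\,u=\varphi$ on $\Gamma_N$ as well. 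Subtracting and adding the two Robin identities on $\Gamma_N$ yields
\[
u|_{\Gamma_N}=-w_{N+1}|_{\Gamma_N},\qquad \gamma_N\partial_{n_N}u\bigr|_{\Gamma_N}=\gamma_{N+1}\partial_{n_{N+1}}w_{N+1}\bigr|_{\Gamma_N}\,.
\]

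The key step is then to glue these fields into a solution of the full homogeneous transmission problem~\eqref{system_t}. Define $u_\ell:=u$ in $\Omega_\ell^{per}$ for $0\leq\ell\leq N$ and $u_{N+1}:=-w_{N+1}$ in $\Omega_{N+1}^{per}$. Using $n_N=-n_{N+1}$, the two identities above become exactly the Dirichlet and Neumann transmission conditions on $\Gamma_N$ (the sign flip in $u_{N+1}$ cancels the sign flip of the normal), while the interior transmission conditions across $\Gamma_\ell$, $0\leq\ell<N$, are satisfied by construction. Since $u_0$ radiates in $\Omega_0$ and $u_{N+1}=-w_{N+1}$ radiates in $\Omega_{N+1}$, the collection $\{u_\ell\}_{\ell=0}^{N+1}$ solves the homogeneous version of~\eqref{system_t} with zero incident field. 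Under the standing hypotheses ($\gamma_\ell=1$ and monotone wavenumbers $k_\ell$) the uniqueness Theorem~\ref{thm:uniqueness1} of Appendix~\ref{wp_proof} applies and forces all $u_\ell\equiv 0$. In particular $w_{N+1}\equiv 0$ in $\Omega_{N+1}^{per}$, so $\varphi=0$, completing the proof.

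The main obstacle is precisely this last step: in contrast with Theorem~\ref{inv}, where one of the two adjacent subdomains was bounded and an auxiliary Robin datum on its far interface produced an extra positive imaginary boundary term that killed $w_1$ on that interface, here both $\Omega_0^{per}$ and $\Omega_{N+1}^{per}$ are semi-infinite, and a direct Green's-identity energy argument only forces vanishing of the propagating Rayleigh coefficients but not of the evanescent modes. The resolution is to notice that the matching relations on $\Gamma_N$ are exactly the transmission conditions (up to an intentional sign flip in the bottom layer that is absorbed into the definition of $u_{N+1}$), so the question reduces to the uniqueness of the original transmission problem with radiation in both semi-infinite layers, which is precisely what Theorem~\ref{thm:uniqueness1} supplies.
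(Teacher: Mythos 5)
Your proposal is correct and follows essentially the same route as the paper: the same G\aa rding-type decomposition (compact perturbation of coercive complexified single-layer operators) for the Fredholm property, and the same pair of auxiliary fields $w_{N+1}$ and $u$ with the matching relations $u|_{\Gamma_N}=-w_{N+1}|_{\Gamma_N}$, $\partial_{n_N}u=\partial_{n_{N+1}}w_{N+1}$ for injectivity. The only cosmetic difference is that you glue $(u,-w_{N+1})$ into a genuine solution of the homogeneous transmission problem and invoke Theorem~\ref{thm:uniqueness1} as a black box, whereas the paper re-runs that theorem's argument on $\partial_{x_2}w$ and $-\partial_{x_2}w_{N+1}$; the content is identical.
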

 \begin{proof}  Note that the Fredholm property of the operator $I-\mathcal{S}^{top}_N\mathcal{S}^{N+1}$ essentially follows via the same arguments as in Theorem~\ref{Fredholm}. Owing to the Fredholm alternative, the result follows once we establish the injectivity of the operator $I-\mathcal{S}^{top}_N\mathcal{S}^{N+1}$. Let $\varphi\in Ker(I-\mathcal{S}^{top}_{N}\mathcal{S}^{N+1})$ and consider the following $\alpha$-quasi-periodic Helmholtz equation
  \begin{eqnarray*}
    \Delta w_{N+1}+k_{N+1}^2w_{N+1}&=&0\qquad{\rm in}\ \Omega_{N+1}^{per}\\
    \partial_{n_{N+1}}w_{N+1}-Z_0w_{N+1}&=&\varphi\qquad{\rm on}\ \Gamma_{N}
  \end{eqnarray*}
  with $w_{N+1}$ radiating in $\Omega_{N+1}$ and $Z_0=i\eta$. Then
  \[
  \mathcal{S}^{N+1}\varphi=(\partial_{n_{N+1}}w_{N+1}+ Z_0w_{N+1})|_{\Gamma_N}.
  \]
  Consider also the following $\alpha$-quasi-periodic Helmholtz equation
  \begin{eqnarray*}
    \Delta w+k(x)^2w&=&0\qquad{\rm in}\ \cup_{\ell=0}^N\Omega_\ell, \\
    k(x)&=&k_\ell\qquad {\rm in}\ \Omega_\ell \\
    \partial_{n_N}w-Z_0w&=&\mathcal{S}^{N+1}\varphi\qquad{\rm on}\ \Gamma_{N},
  \end{eqnarray*}
  where $w$ and $\partial_{n_\ell}w$ are continuous across the interfaces $\Gamma_\ell$ for $0\leq \ell \leq N-1$ and $w$ is radiating in the domain $\Omega_0$. We have then
  \[
  \mathcal{S}^{top}_N\mathcal{S}^{N+1}\varphi=\partial_{n_N}w+Z_0w=\partial_{n_{N+1}}w_{N+1}-Z_0w_{N+1}\quad{\rm on}\ \Gamma_N
  \]
  using the fact that $\mathcal{S}^{top}_N\mathcal{S}^{N+1}\varphi=\varphi$ on $\Gamma_N$. Thus
  \begin{equation}\label{eq:equalityN}
  w|_{\Gamma_N}=-w_{N+1}|_{\Gamma_N}\qquad \partial_{n_N}w|_{\Gamma_N}=\partial_{n_{N+1}}w_{N+1}|_{\Gamma_N}.
  \end{equation}
  Applying Green's identities in each domain $\Omega_j^{per}$ for $0\leq j\leq N+1$ and taking into account the continuity conditions~\eqref{eq:equalityN} together with the continuity of $w$ and its normal derivatives across interfaces $\Gamma_j,0\leq j\leq N-1$, we obtain that $C^{+}_r=0$ for all indices $r$ such that $\beta_{0,r}>0$ and $C^{-}_r=0$ for all indices $r$ such that $\beta_{N+1,r}>0$. The proof of the Theorem follows by applying analogous arguments as in the proof of Theorem~\ref{thm:uniqueness1} to $\partial_{x_2}w$ (which is continuous across the interfaces $\Gamma_j$ for $0\leq j\leq N-1$) and $-\partial_{x_2}w_{N+1}$.
 \end{proof}

 We presented in this section an explanation of the fact that the Schur complement elimination process described in Section~\ref{nystrom} does not break down. Incidentally, we obtain as a byproduct of the results in this section a proof of the equivalence between the scattering PDE problem~\eqref{system_t} and its DDM formulation~\eqref{ddm_t_exp} under the assumption that the former is well posed.  According to the Fredholm results established in Theorem~\ref{Fredholm} and Theorem~\ref{invLast}, the DDM formulation~\eqref{ddm_t_exp} \notesps{requires inversions of operators that are compact perturbations of single layer boundary integral operators. As such, the DDM formulation~\eqref{ddm_t_exp} is not particularly suitable to Krylov subspace linear algebra solvers, especially for configurations that involve large numbers of layers.} It is possible to derive DDM formulations that \notesps{are more amenable to Krylov subspace linear algebra solvers} if more general transmission operators are used instead of the multiplicative factors $i\eta$ in the Robin conditions~\cite{boubendir2017domain}. We are investigating such an approach in the context of periodic layered media. 

\section{Numerical results\label{num}}

We present a variety of numerical results regarding transmission scattering problems in periodic layered media. In all cases we assume that all the coefficients $\gamma_j$ that appear in equations~\eqref{system_t} are equal to $1$, and all the numerical results presented are at normal incidence. Qualitatively similar results are obtained in the case of general $\gamma_j$ and oblique incidence. We present two error indicators, one concerning the energy balance and one concerning errors in the Rayleigh coefficient $B_0^{+}$ in expansion~\eqref{eq:rad_up}. The energy conservation defect is defined as
\begin{equation}\label{eq:en}
  \varepsilon_{en}=\left|\sum_{r\in U^{+}}\frac{\beta_{0,r}}{\beta_0}|C_r^{+}|^2+\sum_{r\in U^{-}}\frac{\beta_{N+1,r}}{\beta_0}|C_r^{-}|^2-1\right|
\end{equation}
where $U^{+}:=\{r\in\mathbb{Z}:\beta_{0,r}\geq 0\}$ and $U^{-}:=\{r\in\mathbb{Z}:\beta_{N+1,r}\geq 0\}$. In all the numerical tests presented in this section, the energy conservation defect turned out to be indicative of the errors achieved in the Rayleigh coefficients of the scattered/transmitted fields. We also denote by $\varepsilon_1$ the relative error achieved in the Rayleigh coefficient $C_0^{+}$, measured against a reference solution that was produced with refined discretizations and large enough values of the parameter $A$ in the windowed quasi-periodic functions~\eqref{eq:qper_GA} and~\eqref{eq:qper_G_shiftA} respectively. This parameter is chosen to be large enough so that very small energy conservation defects were achieved.  \notesps{We do not know of a theoretical way to determine the parameter $A$ in the various layers that optimizes the balance between accuracy and efficiency. In practice, this parameter is selected using information gathered from numerical experiments.  Particularly, we choose values of $A$ large enough so that the RtR discretizations $\mathcal{S}^j_M$ have norms as close to 1 as possible.}

At the heart of our DDM algorithm are computations of RtR maps, which rely on evaluations of boundary-integral operators involving quasi-periodic Green functions. The quasi-periodic functions are approximated via windowing functions cf.~\eqref{eq:qper_GA} and~\eqref{eq:qper_G_shiftA}. We discuss the selection of various parameters that enter the definition of the windowed Green function defined in equation~\eqref{eq:qper_GA} and the shifted windowed Green function defined in equation~\eqref{eq:qper_G_shiftA}. In cases when the wavenumbers $k_\ell$ are not Wood frequencies we had to choose only the parameter $A$ in the definition of the windowed Green function $G_{k_\ell}^{q,A}$ defined in equation~\eqref{eq:qper_GA}; the windowed Green function $G_{k_\ell}^{q,A}$ converge superalgebraically to the quasi-periodic Green function $G_{k_\ell}^q$ as $A\to\infty$~\cite{bruno2016superalgebraically}. On the other hand, in cases when $k_\ell$ is a Wood frequency, we had to select  two additional parameters in the definition of $G_{k_\ell,h_\ell}^{q,j,A}$~\eqref{eq:qper_G_shiftA}: the number $j$ of shifts and the value of the shift $h_\ell$. The rate of convergence of the Green functions $G_{k_\ell,h_\ell}^{q,j,A}$ is algebraic in $\ell$ as $A\to\infty$~\cite{Delourme}. Naively, the cost of evaluating $G_{k_\ell,h_\ell}^{q,j,A}$ is $j+1$ times more expensive than that of evaluating $G_{k_\ell}^{q,A}$ for a fixed value of $A$.  However, the quantities $G_{k_\ell,h_\ell}^{q,j,A}$ can be evaluated at considerably reduced costs via accurate asymptotic expansions~\cite{Delourme}. The values of the shifts $h_\ell$ should be such that the poles of the Green function $G_{k_\ell,h_\ell}^{q,j,A}$ are outside the computational domain $\Omega_\ell$; this requirement entails that $h_0>0$ and $h_{N+1}<0$, whereas the shifts $h_\ell,1\leq \ell\leq N$ should be positive and larger than the width of the corresponding layer domain $\Omega_\ell$ measured in the $x_2$ direction. In addition to these requirements on the shifts $h_\ell$, there are discrete sets of values of the shifts for which the shifted Green functions $G_{k_\ell,h_\ell}^{q,j,A}$ do not converge, and those sets can be explicitly computed~\cite{Delourme}. Indeed, as explained in the proof of Theorem~\ref{well posedness_Wood}, in the domain $\Omega_\ell$ the forbidden set of shifts consists of values $h_\ell$ such that $e^{i\beta_r h_\ell}= 1$ for an index $r\in U_\ell$, where $U_\ell$ is the set of propagating modes corresponding to the wavenumber $h_\ell$. In practice it is straightforward to choose the shifts so that all the requirements specified above are met.
\notesps{Of course, one can use a fixed number of shifts in the shifted Green function for all layers at all frequencies, and this is guaranteed to work as long as the discrete set of shifts $h$ is avoided.  This set is easy to compute for a given structure.   If one wishes to optimize the computational performance, the Wood frequencies for each layer can be computed a priori to decide whether the shifted Green function or simply the smoothly windowed one should be used.}

After selection of the parameters that enter the various windowed Green functions, the DDM algorithm is implemented according to its description in Section~\ref{nystrom}. We detail in all the numerical experiments the size $M$ of the discretization points used to approximate each Robin data $f_j$. The DDM linear system consists of $2(N+1)M$ unknowns. We mention that it is also possible to use non-conforming discretizations of Robin data, that is to use different $M_j$ for each layer $\Omega_j$; the values of $M_j$ are chosen to resolve the wavenumbers $k_j$ as well as the profiles $\Gamma_j$~\cite{boubendir2017domain}--see Table~\ref{comp7a}.  The Schur complement elimination algorithm described in Section~\ref{nystrom} allows for solution of large DDM linear systems using only limited memory storage. For instance, the numerical experiments in Table~\ref{comp8} and Table~\ref{comp83d} involving DDM linear systems with $81920$ and respectively $163840$ unknowns were run on a MacBookPro machine with 8Gb of memory. An important drawback of the elimination algorithm described in Section~\ref{nystrom} is its sequential nature.

We organize the presentation of the numerical experiments into four categories.  First we treat the case of one interface $\Gamma_0$ which is important in its own right for several applications; then we present results for large numbers of layers; we continue with results involving periodic layers that contain periodic inclusions, as such configurations are relevant to photonics applications; and we conclude with three-dimensional results. We emphasize Wood frequencies to demonstrate the versatility of the shifted Green function method, as such cases are computationally more challenging. We indicate in the headings of each Table whether the windowed Green function $G_k^{q,A}$ or the shifted Green function $G_{k,h}^{q,j,A}$ were used in the numerical experiments. 

{\em Results for periodic transmission problems with one grating interface}. We start with an illustration in Table~\ref{comp1} of the high-order accuracy that can be achieved by the DDM solver in the case of one periodic interface/grating $\Gamma_0$ given by the graph of the $2\pi$ periodic function
$x_2=H/2\cos{x_1}$ for two values of the height $H=0.6$ and $H=2$; in this example \notesps{we took $k_0=4.1$ and $k_1=16.1$ and thus} the period of the interface $\Gamma_0$ is respectively $4.1$ and $16.1$ wavelength across.

\begin{table}
   \begin{center}
     \resizebox{!}{1.2cm}
{   
\begin{tabular}{|c|c|c|c|c|c|c|}
\hline
\multicolumn{3}{|c|} {$G_{k_\ell}^{q,A}, k_0=4.1, k_1=16.1, H=0.6$} & \multicolumn{3}{c|} {$G_{k_\ell}^{q,A}, k_0=4.1, k_1=16.1, H=2$} \\
\cline{1-6}
A & $\varepsilon_{en}$ & $\varepsilon_1$ & A & $\varepsilon_{en}$ & $\varepsilon_1$ \\
\hline
20 & 2.4 $\times$ $10^{-5}$ & 3.4 $\times$ $10^{-6}$ & 40 & 3.5 $\times$ $10^{-4}$ & 8.1 $\times$ $10^{-5}$\\
40 & 3.0 $\times$ $10^{-7}$ & 1.2 $\times$ $10^{-7}$ & 120 & 5.6 $\times$ $10^{-5}$ & 8.2 $\times$ $10^{-6}$\\
80 & 6.1 $\times$ $10^{-8}$ & 1.9 $\times$ $10^{-8}$ & 240 & 8.4 $\times$ $10^{-7}$ & 2.0 $\times$ $10^{-6}$\\
\hline
\end{tabular}
}
\caption{Convergence of the DDM transmission solver in the case of one interface of material discontinuity $\Gamma_0$ given by the grating profile $x_2=H/2\cos{x_1}$ under normal incidence, with wavenumbers $k_0=4.1$ and $k_1=16.1$, and $M=64$. The reference solutions were computed using (1) $A=240$ in the case $H=0.6$ ---with a corresponding $\varepsilon_{en}=5.9\times 10^{-9}$ and $A=400$ in the case $H=2$---with a corresponding $\varepsilon_{en}=3.5\times 10^{-8}$, and $M=128$ in both cases.\label{comp1}}
\end{center}
\end{table}

We continue in Table~\ref{comp2} and Table~\ref{comp3} with numerical results concerning the convergence of the DDM algorithm in the case of one grating profile $\Gamma_0$ given by the graph of the $2\pi$ periodic function $x_2=H/2\cos{x_1}, H=0.6$ for two values of the wavenumbers $k_0$ and $k_1$ that are simultaneously Wood frequencies.  In Table~\ref{comp2} we consider medium frequencies---the set $U^{+}:=\{r\in\mathbb{Z}:\beta_{0,r}\geq 0\}$ consists of 17 propagating modes and the set $U^{-}:=\{r\in\mathbb{Z}:\beta_{N+1,r}\geq 0\}$ consists of 65 propagating modes; in Table~\ref{comp3} higher frequencies---the set $U^{+}:=\{r\in\mathbb{Z}:\beta_{0,r}\geq 0\}$ consists of 31 propagating modes and the set $U^{-}:=\{r\in\mathbb{Z}:\beta_{N+1,r}\geq 0\}$ consists of 121 propagating modes. In the configuration in Table~\ref{comp2} the period of the interface $\Gamma_0$ is 8 and 32 wavelengths across on each side, whereas in the configuration in Table~\ref{comp3} the period of the interface $\Gamma_0$ is 15 and 60 wavelengths across on each side.

\begin{table}
   \begin{center}
     \resizebox{!}{1.2cm}
{   
\begin{tabular}{|c|c|c|c|c|c|}
\hline
\multicolumn{3}{|c|} {$G_{k_\ell,h}^{q,j,A}, j=3, h_0=-h_1= 1.3, k_0=8, k_1=32, H=0.6$} & \multicolumn{3}{c|} {$G_{k_\ell,h}^{q,j,A}, j=5, h_0=-h_1=1.3, k_0=8, k_1=32, H=0.6$} \\
\cline{1-6}
A & $\varepsilon_{en}$ & $\varepsilon_1$ & A & $\varepsilon_{en}$ & $\varepsilon_1$ \\
\hline
20 & 1.5 $\times$ $10^{-3}$ & 5.7 $\times$ $10^{-4}$ & 20 & 7.9 $\times$ $10^{-4}$ & 5.1 $\times$ $10^{-4}$\\
40 & 7.4 $\times$ $10^{-4}$ & 1.2 $\times$ $10^{-4}$ & 40 & 1.5 $\times$ $10^{-4}$ & 7.5 $\times$ $10^{-5}$\\
80 & 1.3 $\times$ $10^{-4}$ & 2.2 $\times$ $10^{-5}$ & 80 & 3.0 $\times$ $10^{-6}$ & 1.3 $\times$ $10^{-5}$\\
120 & 4.1 $\times$ $10^{-5}$ & 7.2 $\times$ $10^{-6}$ & 120 & 9.7 $\times$ $10^{-8}$ & 1.5 $\times$ $10^{-6}$\\
\hline
\end{tabular}
}
\caption{Convergence of the DDM transmission solver in the case of one interface of material discontinuity $\Gamma_0$ given by the grating profile $x_2=H/2\cos{x_1}, H=0.6$, under normal incidence, with $k_0=8$ and $k_1=32$, and $M=128$. In this case both wavenumbers $k_0$ and $k_1$ are Wood frequencies. The reference solutions were computed using $A=240$, $j=5$ and shifts $h_0=-h_1=1.3$ with a corresponding $\varepsilon_{en}=1.6\times 10^{-11}$.\label{comp2}}
\end{center}
\end{table}

\begin{table}
   \begin{center}
     \resizebox{!}{1.2cm}
{   
\begin{tabular}{|c|c|c|c|c|c|}
\hline
\multicolumn{3}{|c|} {$G_{k_\ell,h}^{q,j,A}, j=3, h_0= -h_1=0.3, k_0=15, k_1=60, H=0.6$} & \multicolumn{3}{c|} {$G_{k_\ell,h}^{q,j,A}, j=5, h_0=-h_1=0.3, k_0=15, k_1=60, H=0.6$} \\
\cline{1-6}
A & $\varepsilon_{en}$ & $\varepsilon_1$ & A & $\varepsilon_{en}$ & $\varepsilon_1$ \\
\hline
20 & 4.8 $\times$ $10^{-4}$ & 4.6 $\times$ $10^{-5}$ & 20 & 1.3 $\times$ $10^{-5}$ & 2.7 $\times$ $10^{-5}$\\
40 & 5.0 $\times$ $10^{-5}$ & 8.0 $\times$ $10^{-6}$ & 40 & 3.0 $\times$ $10^{-7}$ & 2.1 $\times$ $10^{-6}$\\
80 & 5.5 $\times$ $10^{-6}$ & 1.3 $\times$ $10^{-6}$ & 80 & 2.8 $\times$ $10^{-8}$ & 1.6 $\times$ $10^{-7}$\\
\hline
\end{tabular}
}
\caption{Convergence of the DDM transmission solver in the case of one interface of material discontinuity $\Gamma_0$ given by the grating profile $x_2=H/2\cos{x_1}, H=0.6$, under normal incidence, with higher frequency wavenumbers $k_0=15$ and $k_1=60$, and $M=256$. In this case both wavenumbers $k_0$ and $k_1$ are Wood frequencies. The reference solutions were computed using $A=240$, $j=5$, $h_0=-h_1=0.3$, and $M=256$ with a corresponding $\varepsilon_{en}=2.2\times 10^{-10}$.\label{comp3}}
\end{center}
\end{table}

In Table~\ref{comp4} we present results concerning a deep grating profile $\Gamma_0$ given by the graph of the $2\pi$ periodic function $x_2=H/2\cos{x_1}, H=2$ for two values of the wavenumbers $k_0$ and $k_1$ that are simultaneously Wood frequencies, that is $k_0=4$ and $k_1=16$. We note that high-accuracy results can be achieved in this case by increasing the size of discretization. 
\begin{table}
   \begin{center}
     \resizebox{!}{1.2cm}
{   
\begin{tabular}{|c|c|c|c|c|c|}
\hline
\multicolumn{3}{|c|} {$G_{k_\ell,h}^{q,j,A}, j=3, h_0=-h_1=0.21, k_0=4, k_1=16, H=2$} & \multicolumn{3}{c|} {$G_{k_\ell,h}^{q,j,A}, j=5, h_0=-h_1=0.21, k_0=4, k_1=16, H=2$} \\
\cline{1-6}
A & $\varepsilon_{en}$ & $\varepsilon_1$ & A & $\varepsilon_{en}$ & $\varepsilon_1$ \\
\hline
20 & 1.7 $\times$ $10^{-4}$ & 3.0 $\times$ $10^{-5}$ & 20 & 3.1 $\times$ $10^{-6}$ & 2.8 $\times$ $10^{-6}$\\
40 & 2.5 $\times$ $10^{-5}$ & 5.1 $\times$ $10^{-6}$ & 40 & 1.5 $\times$ $10^{-7}$ & 2.4 $\times$ $10^{-7}$\\
80 & 3.6 $\times$ $10^{-6}$ & 9.3 $\times$ $10^{-7}$ & 80 & 1.4 $\times$ $10^{-8}$ & 2.0 $\times$ $10^{-8}$\\
\hline
\end{tabular}
}
\caption{Convergence of the DDM transmission solver in the case of one interface of material discontinuity $\Gamma_0$ given by the grating profile $x_2=H/2\cos{x_1}, H=2$, normal incidence, with $k_0=4$ and $k_1=16$, and $M=192$. In this case both wavenumbers $k_0$ and $k_1$ are Wood frequencies. The reference solutions were computed using $A=240$, $j=5$, $h_0=-h_1=0.21$, and $M=256$ with a corresponding $\varepsilon_{en}=2.2\times 10^{-10}$.\label{comp4}}
\end{center}
\end{table}

We conclude the numerical results in this part with a case in Table~\ref{comp5} with a transmission experiment involving one interface of material discontinuity $\Gamma_0$ and two wavenumbers such that one of them is not a Wood frequency while the other is a Wood frequency. 
\begin{table}
   \begin{center}
     \resizebox{!}{1.0cm}
{   
\begin{tabular}{|c|c|c|c|c|c|}
\hline
\multicolumn{3}{|c|} {$G_{k_0}^{q,A}, G_{k_1,h}^{q,j,A}, j=3, h_0=-h_1= 0.3, k_0=4.1, k_1=16, H=0.6$} & \multicolumn{3}{c|} {$G_{k_0}^{q,AA}, G_{k_1,h}^{q,j,A}, j=5, h_0=-h_1=0.3, k_0=4.1, k_1=16, H=0.6$} \\
\cline{1-6}
$A$ & $\varepsilon_{en}$ & $\varepsilon_1$ & $A$ & $\varepsilon_{en}$ & $\varepsilon_1$ \\
\hline
20 & 1.3 $\times$ $10^{-3}$ & 5.4 $\times$ $10^{-4}$ & 20 & 5.8 $\times$ $10^{-6}$ & 3.4 $\times$ $10^{-6}$\\
40 & 4.4 $\times$ $10^{-4}$ & 9.8 $\times$ $10^{-5}$ & 40 & 2.1 $\times$ $10^{-7}$ & 2.2 $\times$ $10^{-7}$\\
80 & 8.6 $\times$ $10^{-5}$ & 1.7 $\times$ $10^{-5}$ & 80 & 2.0 $\times$ $10^{-8}$ & 2.3 $\times$ $10^{-8}$\\
\hline
\end{tabular}
}
\caption{Convergence of the DDM transmission solver in the case of one interface of material discontinuity $\Gamma_0$ given by the grating profile $x_2=H/2\cos{x_1}, H=0.6$, normal incidence, with $k_0=4.1$ and $k_1=16$, and $M=64$. In this case $k_0$ is not a Wood frequency and $k_1$ is a Wood frequencies. The reference solutions were computed using $A=240$, $j=5$, $h_0=-h_1=0.3$, and $M=128$ with a corresponding $\varepsilon_{en}=1.7\times 10^{-9}$.\label{comp5}}
\end{center}
\end{table}

{\em Multiple layers}. The next set of results concern transmission experiments involving multiple periodic layers. In the example that follows we consider the first profile $\Gamma_0$ described by (a) $x_2=F_0(x_1), F_0(x_1):=H/2\cos{x_1}$ and (b) $x_2=F_0(x_1), F_0(x_1):=\pi\ H(0.4\cos(x_1)-0.2\cos(2x_1)+0.4\cos(3x_1))$ and the subsequent profiles $\Gamma_\ell$ being simple down shifted versions of the first profile, that is the grating $\Gamma_\ell$ is given by $x_2=F_\ell(x_1), F_\ell(x_1):=-\ell L + F_0(x_1), 0\leq \ell\leq N$. The first set of results in Table~\ref{comp6} concerns the convergence of the DDM transmission solver in layered configurations consisting of 4 layers (that is $N=2$) separated by interfaces $\Gamma_\ell,0\leq \ell\leq 2$, when each wavenumber $k_\ell, 0\leq\ell\leq 3$ is a Wood frequency. 

\begin{table}
   \begin{center}
     \resizebox{!}{1.8cm}
{   
\begin{tabular}{|c|c|c|c|c|c|}
\hline
\multicolumn{3}{|c|} {$G_{k_\ell,h_\ell}^{q,j,A}, j=3$} & \multicolumn{3}{c|} {$G_{k_\ell,h_\ell}^{q,j,A}, j=5$} \\
\cline{1-6}
$A$ & $\varepsilon_{en}$ & $\varepsilon_1$ & $A$ & $\varepsilon_{en}$ & $\varepsilon_1$ \\
\hline
20 & 7.0 $\times$ $10^{-1}$ & 7.2 $\times$ $10^{-2}$ & 20 & 7.0 $\times$ $10^{-2}$ & 7.2 $\times$ $10^{-2}$\\
40 & 8.7 $\times$ $10^{-4}$ & 1.0 $\times$ $10^{-3}$ & 40 & 8.5 $\times$ $10^{-4}$ & 1.1 $\times$ $10^{-3}$\\
80 & 1.0 $\times$ $10^{-4}$ & 7.2 $\times$ $10^{-5}$ & 80 & 2.7 $\times$ $10^{-5}$ & 3.1 $\times$ $10^{-5}$\\
\hline
\hline
20 & 9.3 $\times$ $10^{-1}$ & 2.4 $\times$ $10^{-2}$ & 20 & 9.4 $\times$ $10^{-1}$ & 2.4 $\times$ $10^{-2}$\\
40 & 2.6 $\times$ $10^{-3}$ & 1.4 $\times$ $10^{-3}$ & 40 & 2.7 $\times$ $10^{-3}$ & 1.5 $\times$ $10^{-3}$\\
80 & 1.9 $\times$ $10^{-4}$ & 1.6 $\times$ $10^{-4}$ & 80 & 1.9 $\times$ $10^{-6}$ & 7.3 $\times$ $10^{-5}$\\
\hline
\end{tabular}
}
\caption{Convergence of the DDM transmission solver in the case of a periodic configuration consisting of 4 layers (that is $N=2$), where the interfaces $\Gamma_\ell,0\leq \ell\leq 2$ are given by grating profiles $F_\ell(x_1)=-\ell L+H/2\cos{x_1}, H=0.6, L=1.3, 0\leq \ell\leq 2$---top panel and $F_\ell(x_1)=-\ell L+\pi\ H(0.4\cos(x_1)-0.2\cos(2x_1)+0.4\cos(3x_1)), H=0.1, L=1.3$---bottom panel, under normal incidence, with $k_\ell=\ell+1$ for $0\leq \ell\leq 3$, and $M=64$. All of the wavenumbers $k_\ell$ are Wood frequencies. The shifts were chosen $h_0=0.3$, $h_1=h_2=2.7$, and $h_3=-0.3$. The reference solutions were computed using $A=120$, $j=5$, and $M=128$ with a corresponding $\varepsilon_{en}=2.6\times 10^{-6}$ (top) and $A=120$, $j=5$, and $M=128$ with a corresponding $\varepsilon_{en}=2.9\times 10^{-6}$ (bottom).\label{comp6}}
\end{center}
\end{table}

In the next set of results in Table~\ref{comp7} we present numerical experiments concerning periodic configurations that involve large numbers of layers \notesps{(i.e 10, 20, and 40 layers)} and associated wavenumbers that are all Wood frequencies. We used shifted Green functions with a number $j=3$ of shifts, as this choice leads to small energy conservation defects. In the examples presented in Table~\ref{comp7}, the interface $\Gamma_j$ is $j+1$ wavelengths across, leading thus to problems that overall are 55, 210, and respectively 820 wavelengths in size.

\begin{table}
   \begin{center}
     \resizebox{!}{1.8cm}
{   
\begin{tabular}{|c|c|c|c|c|c|c|c|c|}
\hline
\multicolumn{3}{|c|} {$G_{k_\ell,h_\ell}^{q,3,A}, N=9$} & \multicolumn{3}{c|} {$G_{k_\ell,h_\ell}^{q,3,A}, N=19$} & \multicolumn{3}{c|} {$G_{k_\ell,h_\ell}^{q,3,A}, N=39$}\\
\cline{1-9}
$A$ & $M$ & $\varepsilon_{en}$ & $A$ & $M$ & $\varepsilon_{en}$ & $A$ & M & $\varepsilon_{en}$ \\
\hline
40 & 64 & 4.1 $\times$ $10^{-3}$ & 40 & 128 & 3.6 $\times$ $10^{-3}$ & 40 & 192  & 5.8 $\times$ $10^{-3}$ \\
80 & 64 & 1.2 $\times$ $10^{-3}$ & 80 & 128 & 8.2 $\times$ $10^{-4}$ & 80 & 192  & 1.9 $\times$ $10^{-3}$ \\
\hline
\hline
80 & 128 & 1.3 $\times$ $10^{-3}$ & 80 & 192 & 7.6 $\times$ $10^{-2}$ & 80 &   256 & 4.7 $\times$ $10^{-2}$ \\
120 & 128 & 2.9 $\times$ $10^{-4}$ & 120 & 192 & 3.1 $\times$ $10^{-4}$ & 120 & 256  & 4.3 $\times$ $10^{-4}$ \\
\hline
\end{tabular}
}
\caption{Energy defect errors produced by the DDM transmission solver for configurations consisting of $N+2$ layers for various values of $N$, where the interfaces $\Gamma_\ell,0\leq \ell\leq N$ are given by grating profiles $F_\ell(x_1)=-\ell L+H/2\cos{x_1}, H=0.6, L=1.3, 0\leq \ell\leq N$ (top panel) and $F_\ell(x_1)=-\ell L+\pi\ H(0.4\cos(x_1)-0.2\cos(2x_1)+0.4\cos(3x_1)), H=0.1, L=1.3, 0\leq\ell\leq N$ (bottom panel), under normal incidence, with $k_\ell=\ell+1$ for $0\leq \ell\leq N+1$, and various values of the discretization size $M$. All of the wavenumbers $k_\ell$ are Wood frequencies. The shifts were chosen $h_0=0.3$, $h_\ell=2.7,1\leq \ell\leq N$, and $h_{N+1}=-0.3$. The discrete DDM linear system has in each case $1280$, $5120$, and respectively $15360$ unknowns and is solved via the Schur complement elimination procedure.\label{comp7}}
\end{center}
\end{table}

In applications that involve high-contrast layer media, using non-conforming DDM discretizations leads to more efficient solvers. We present experiments in Table~\ref{comp7a} concerning configurations consisting of layers with alternating high-contrast material properties. In such settings it is natural to use coarser discretizations to compute the RtR maps corresponding to layers with smaller wavenumbers as well as restriction/interpolation Fourier matrices to match non-conforming interface Robin data. 

\begin{table}
   \begin{center}
     \resizebox{!}{1.2cm}
{   
\begin{tabular}{|c|c|c|c|c|c|c|c|c|}
\hline
\multicolumn{3}{|c|} {$G_{k_\ell}^{q,A}, N=9$} & \multicolumn{3}{c|} {$G_{k_\ell}^{q,A}, N=19$} & \multicolumn{3}{c|} {$G_{k_\ell}^{q,A}, N=39$}\\
\cline{1-9}
$A$ & $M_1/M_2$ & $\varepsilon_{en}$ & $A$ & $M_1,M_2$ & $\varepsilon_{en}$ & $A$ & $M_1,M_2$ & $\varepsilon_{en}$ \\
\hline
30 & 48/96 & 1.0 $\times$ $10^{-2}$ & 30 & 48/96 & 1.8 $\times$ $10^{-2}$ & 30 & 48/96  & 1.6 $\times$ $10^{-2}$ \\
30 & 64/128 & 1.8 $\times$ $10^{-3}$ & 30 & 64/128 & 1.6 $\times$ $10^{-3}$ & 30 & 64/128  & 2.0 $\times$ $10^{-3}$ \\
\hline
\hline
30 & 64/192 & 1.1 $\times$ $10^{-1}$ & 30 & 64/192 & 4.1 $\times$ $10^{-1}$ & 30 & 64/192  & 3.9 $\times$ $10^{-1}$ \\
30 & 96/256 & 1.5 $\times$ $10^{-3}$ & 30 & 96/256 & 8.0 $\times$ $10^{-3}$ & 30 & 96/256  & 8.1 $\times$ $10^{-3}$ \\
\hline
\end{tabular}
}
\caption{Energy defect errors produced by the DDM transmission solver for configurations consisting of $N+2$ layers for various values of $N$, where the interfaces $\Gamma_\ell,0\leq \ell\leq N$ are given by grating profiles $F_\ell(x_1)=-\ell L+H/2\cos{x_1}, H=2, L=0.3, 0\leq \ell\leq N$ (top panel) and $F_\ell(x_1)=-\ell L+\pi\ H(0.4\cos(x_1)-0.2\cos(2x_1)+0.4\cos(3x_1)), H=1, L=0.3, 0\leq \ell\leq N$ (bottom panel), under normal incidence, with $k_\ell=4.2$ for $\ell$ even and $k_\ell=16.2$ for $\ell$ odd, and various values of the non-conformal \notesps{interface} discretization size $M_1$ and $M_2$. For the numerical experiments presented in the top panel the discrete DDM linear system has in each case $1440$, $2880$, and respectively $5760$ unknowns for the coarser discretizations and respectively $1920$, $3840$ and $7680$ for the finer discretizations.  For the numerical experiments presented in the top panel the discrete DDM linear system has in each case $2560$, $5120$, and respectively $10240$ unknowns for the coarser discretizations and respectively $3520$, $7040$ and $14080$ for the finer discretizations. In each case the DDM linear system is solved via the Schur complement elimination procedure.\label{comp7a}}
\end{center}
\end{table}

In the last set of results in this part we present in Table~\ref{comp8} numerical experiments concerning very large numbers of layers and associated wavenumbers that are not Wood frequencies. In such cases, the Schur complement elimination algorithm for the solution of the DDM algorithm reduces the memory requirements via the forward/backward domain sweep. In the examples presented in Table~\ref{comp8}, the interface $\Gamma_j$ is approximately $j+1$ wavelengths across, leading thus to problems that overall are about 820 and respectively 3240 wavelengths in size. \notesps{We mention that all the matrices $\mathcal{D}_{j,M}$ (see Section~\ref{nystrom}) that need be inverted in the Schur complement solution of the problems presented in Table~\ref{comp8} are well conditioned, with condition numbers in the interval $[19,400]$; also, the condition numbers of the matrices $\mathcal{D}_{j,M}$ grow with the layer index $j$. However, the condition numbers of the matrices $\mathcal{D}_{j,M}$ grow with the size $M$ of the discretization cf. Theorem~\ref{Fredholm} and Theorem~\ref{invLast}, yet not drastically. On the other hand, if the solution of  the DDM corresponding to transmission problems with $N=9,19,29$ layers and wavenumbers $k_\ell=\ell+1.2, 0\leq \ell\leq N+1$ were attempted via iterative solvers, the numbers of GMRES iterations required to reach a relative residual of $10^{-4}$ are $270,718,1292$ for the grating interfaces $F_\ell(x_1)=-\ell L+H/2\cos{x_1}, H=2, L=0.3, 0\leq \ell\leq N$, and respectively $318,814,1498$ for the grating interfaces $F_\ell(x_1)=-\ell L+\pi\ H(0.4\cos(x_1)-0.2\cos(2x_1)+0.4\cos(3x_1)), H=1, L=0.3, 0\leq \ell\leq N$ when the discretized RtR matrices are of size $M^2, M=192$ and $\eta=1$. Despite the large numbers of GMRES iterations required for DDM convergence in the experiments above, the condition numbers of the DDM matrices (which can be built in these cases) are reasonable: $99.1, 317.6, 671.4$ in the first case, and respectively $119.2, 427.1, 947.6$ in the second case.}

\begin{table}
   \begin{center}
\begin{tabular}{|c|c|c|c|}
\hline
\multicolumn{2}{|c|} {$G_{k_\ell}^{q,A}, N=39, k_\ell=\ell+1.2, 0\leq \ell\leq 40$} & \multicolumn{2}{c|} {$G_{k_\ell}^{q,A}, N=79, k_\ell=\ell+1.2, 0\leq \ell\leq 80$}\\
\cline{1-4}
A & $\varepsilon_{en}$ & A & $\varepsilon_{en}$ \\
\hline
20 & 5.4 $\times$ $10^{-2}$ & 20 & 5.5 $\times$ $10^{-1}$ \\
40 & 1.3 $\times$ $10^{-3}$ & 40 & 2.3 $\times$ $10^{-3}$ \\
80 & 2.1 $\times$ $10^{-4}$ & 80 & 4.5 $\times$ $10^{-4}$ \\
\hline
\hline
20 & 6.1 $\times$ $10^{-2}$ & 20 & 7.1 $\times$ $10^{-1}$ \\
40 & 1.1 $\times$ $10^{-3}$ & 40 & 1.0 $\times$ $10^{-2}$ \\
80 & 9.8 $\times$ $10^{-5}$ & 80 & 2.4 $\times$ $10^{-3}$ \\
\hline
\end{tabular}
\caption{Convergence of the DDM transmission solver for configuration consisting of $N+2$ layers for various values of $N$, where the interfaces $\Gamma_\ell,0\leq \ell\leq N$ are given by grating profiles $F_\ell(x_1)=-\ell L+H/2\cos{x_1}, H=2, L=0.3, 0\leq \ell\leq N$ (top panel) and $F_\ell(x_1)=-\ell L+\pi\ H(0.4\cos(x_1)-0.2\cos(2x_1)+0.4\cos(3x_1)), H=1, L=0.3, 0\leq \ell\leq N$ (bottom panel), under normal incidence, with $k_\ell=\ell+1.2$ for $0\leq \ell\leq N+1$, and various values of the discretization $M$. None of the wavenumbers are Wood frequencies. In the case $N=39$ we used $M=256$ resulting in a discrete DDM linear system with $20480$ unknowns; in the case $N=79$ we used $M=512$ resulting in a discrete DDM linear system with $81920$ unknowns. The large sized DDM systems are solved via the Schur complement elimination procedure.\label{comp8}}
\end{center}
\end{table}

{\em Inclusions in periodic layers}. In the last part of the numerical results section we present numerical experiments concerning periodic layers with embedded perfectly reflecting inclusions as presented in Figure~\ref{fig:inclusion}. Specifically, we consider perfectly reflecting inclusions $D$ whose boundary $\partial D$ is a smooth closed curve given in parametric form $\partial D:=\{(x_1(t),x_2(t)): x_1(t)=3.3 + r(t)\cos{t}, x_2(t)=-1+r(t)\sin{t}, r(t)=0.8 +0.4\cos{3t}, 0\leq t\leq 2\pi\}$. The inclusions $D$ are embedded periodically in a layered structure whose top boundary is explicitly given by either $\Gamma_t:=\{(x_1,F_0(x_1)): F_0(x_1)=0.6+H/2\cos{x_1}, H=0.6\}$ or the flat interface $\Gamma_t:=\{(x_1,0.6)\}$, and the bottom interface is given by shifting the top interface 3 units down the $x_2$ axis. The first set of numerical results presented in Table~\ref{comp9} concerns such configurations in the case when all three wavenumbers $k_0,k_1$ and $k_2$ are Wood frequencies.

\begin{figure}
\centering
\includegraphics[scale=0.35]{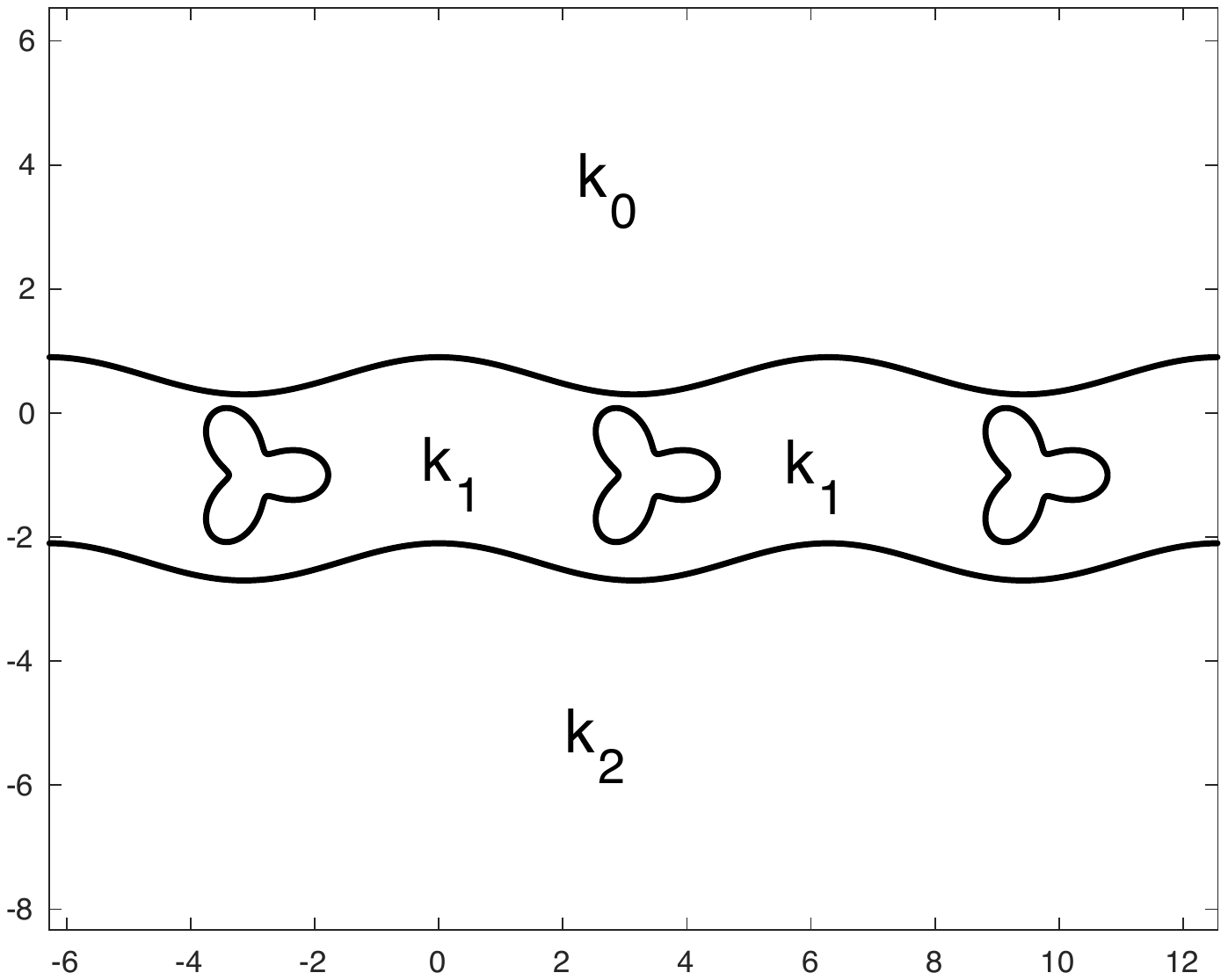}\includegraphics[scale=0.35]{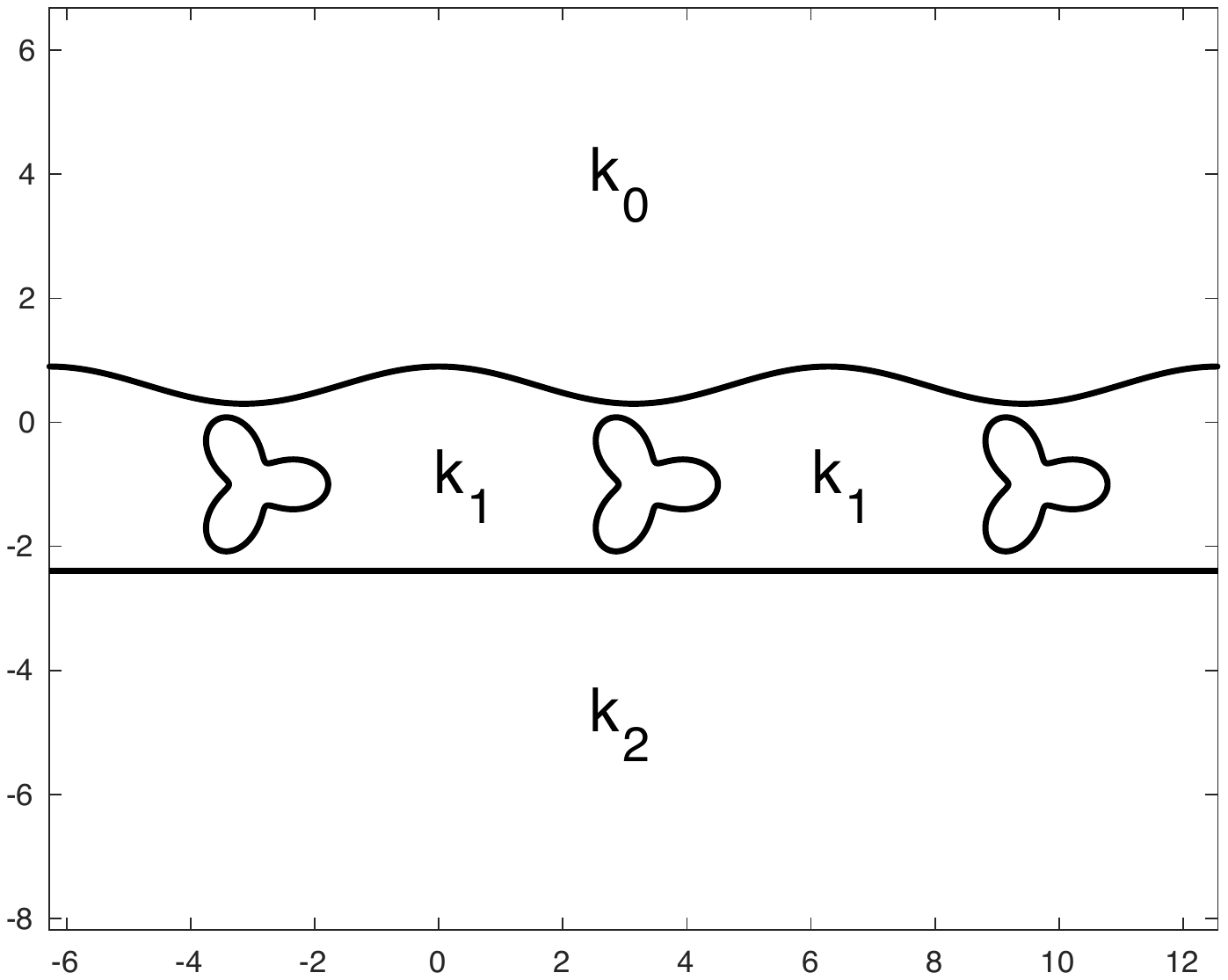}\includegraphics[scale=0.35]{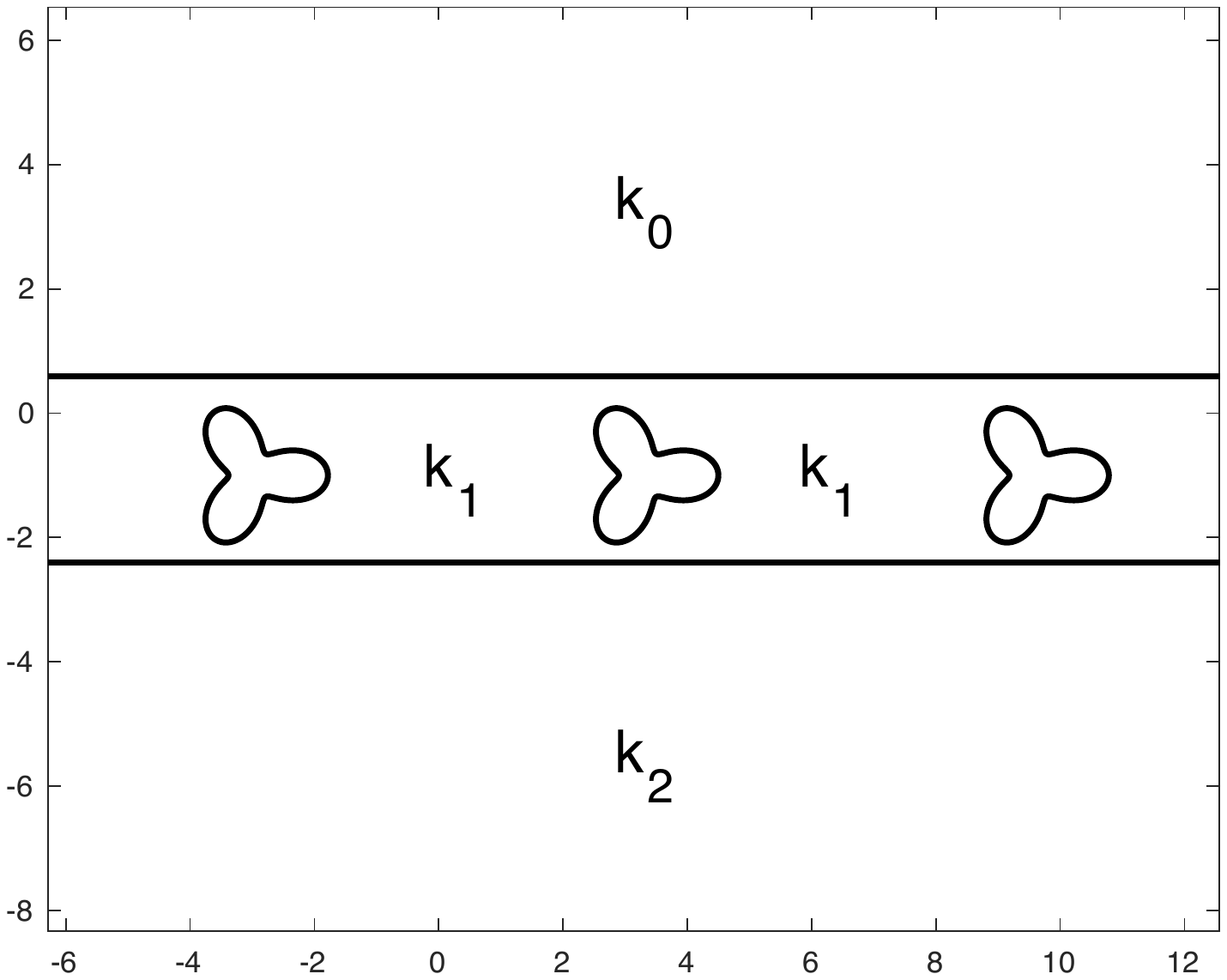}
\caption{Typical inclusions in a layered medium.}
\label{fig:inclusion}
\end{figure}

\begin{table}
  \begin{center}
    \resizebox{!}{1.9cm}
{   
\begin{tabular}{|c|c|c|c|c|c|c|c|c|}
\hline
\multicolumn{3}{|c|} {$G_{k_\ell,h_\ell}^{q,3,A}$} & \multicolumn{3}{c|} {$G_{k_\ell,h_\ell}^{q,3,A}$} & \multicolumn{3}{c|} {$G_{k_\ell,h_\ell}^{q,3,A}$}\\
\cline{1-9}
A & $\varepsilon_{en}$ & $\varepsilon_{1}$ & A & $\varepsilon_{en}$ & $\varepsilon_{1}$ & A & $\varepsilon_{en}$ & $\varepsilon_{1}$\\
\hline
40 & 1.8 $\times$ $10^{-2}$ & 2.8 $\times$ $10^{-2}$ & 40 & 1.1 $\times$ $10^{-2}$ & 1.8 $\times$ $10^{-1}$ & 40 & 7.8 $\times$ $10^{-3}$ & 2.2 $\times$ $10^{-2}$\\
80 & 4.3 $\times$ $10^{-3}$ & 4.8 $\times$ $10^{-3}$ & 80 & 2.6 $\times$ $10^{-3}$ & 3.0 $\times$ $10^{-3}$ & 80 &  2.4 $\times$ $10^{-3}$ & 3.7 $\times$ $10^{-3}$\\
120 & 1.6 $\times$ $10^{-3}$ & 1.4 $\times$ $10^{-3}$ & 120 & 3.2 $\times$ $10^{-4}$ & 8.0 $\times$ $10^{-4}$ & 120 & 2.6 $\times$ $10^{-4}$ & 1.1 $\times$ $10^{-3}$\\
\hline
\hline
40 & 1.5 $\times$ $10^{-2}$ & 4.1 $\times$ $10^{-3}$ & 40 & 1.8 $\times$ $10^{-2}$ & 6.0 $\times$ $10^{-3}$ & 40 & 4.1 $\times$ $10^{-3}$ & 7.1 $\times$ $10^{-3}$\\
80 & 5.3 $\times$ $10^{-3}$ & 6.2 $\times$ $10^{-4}$ & 80 & 5.7 $\times$ $10^{-3}$ & 9.4 $\times$ $10^{-4}$ & 80 &  2.1 $\times$ $10^{-3}$ & 1.2 $\times$ $10^{-3}$\\
120 & 1.0 $\times$ $10^{-3}$ & 2.1 $\times$ $10^{-4}$ & 120 & 1.1 $\times$ $10^{-3}$ & 2.1 $\times$ $10^{-4}$ & 120 & 2.7 $\times$ $10^{-4}$ & 3.7 $\times$ $10^{-4}$\\
\hline
\end{tabular}
}
\caption{Convergence of the DDM algorithm in the case of transmission problems for periodic configurations with perfectly reflecting inclusions depicted in Figure~\ref{fig:inclusion} with $k_\ell=\ell+1,0\leq l\leq 2$ in the top panel and $k_\ell=\ell+4,0\leq 2$ in the bottom panel, $M=64$ in each case. The wavenumbers were chosen to be Wood frequencies, and the shifts were chosen to be $h_0=0.4=-h_2$, and $h_1=3.3$. Each panel in the table (left, center, right) corresponds to the analogue periodic configuration in Figure~\ref{inclusions}. We used reference solutions for which the conservation of energy balance was of the order $10^{-5}$.\label{comp9}}
\end{center}
\end{table}

Finally, we conclude with a numerical experiment in Table~\ref{comp10} concerning scattering by a periodic arrays of perfectly reflecting cylinders $D$ described above at Wood frequencies. We treat this case via fictitious periodic layers bounded by flat interfaces as in the right panel of Figure~\ref{fig:inclusion}, for which we apply the DDM transmission algorithm with $k_0=k_1=k_2$.  The DDM approach for the solution of scattering by array of cylinders at Wood frequencies requires discretization of fictitious boundaries and as such is not as computationally efficient as alternative approaches \notesps{that rely on Sherman-Morrison formula}~\cite{bruno2017rapidly}. Nevertheless, we believe that the DDM approach is more straightforward and more modular in the sense that it consists of several black-box solvers that can be easily assembled to treat complex periodic-layered cases.   

\begin{table}
  \begin{center}
    \resizebox{!}{1.2cm}
{   
\begin{tabular}{|c|c|c|c|c|c|c|c|c|}
\hline
\multicolumn{3}{|c|} {$G_{1,h_\ell}^{q,3,A}$} & \multicolumn{3}{c|} {$G_{2,h_\ell}^{q,3,A}$} & \multicolumn{3}{c|} {$G_{4,h_\ell}^{q,3,A}$}\\
\cline{1-9}
A & $\varepsilon_{en}$ & $\varepsilon_{1}$ & A & $\varepsilon_{en}$ & $\varepsilon_{1}$ & A & $\varepsilon_{en}$ & $\varepsilon_{1}$\\
\hline
100 & 6.4 $\times$ $10^{-5}$ & 7.8 $\times$ $10^{-5}$ & 100 & 7.9 $\times$ $10^{-3}$ & 2.1 $\times$ $10^{-3}$ & 100 & 8.2 $\times$ $10^{-3}$ & 1.2 $\times$ $10^{-3}$\\
200 & 1.0 $\times$ $10^{-5}$ & 1.2 $\times$ $10^{-5}$ & 200 & 1.4 $\times$ $10^{-3}$ & 3.5 $\times$ $10^{-4}$ & 200 &  1.1 $\times$ $10^{-3}$ & 1.5 $\times$ $10^{-4}$\\
400 & 6.7 $\times$ $10^{-7}$ & 1.6 $\times$ $10^{-6}$ & 400 & 2.4 $\times$ $10^{-4}$ & 4.2 $\times$ $10^{-5}$ & 400 & 1.5 $\times$ $10^{-4}$ & 1.5 $\times$ $10^{-5}$\\
\hline
\end{tabular}
}
\caption{Scattering by an array of cylinders as presented in the right panel of Figure~\ref{fig:inclusion} with $k_0=k_1=k_2=k$, $M=64$ in each case. The wavenumbers were chosen to be Wood frequencies, and the shifts were selected to be $h_0=0.4=-h_2$, and $h_1=3.3$. We used reference solutions for which the conservation of energy balance was of the order $10^{-8}$ for $k=1$ and respectively $10^{-5}$ for $k=2,4$.\label{comp10}}
\end{center}
\end{table}

{\em 3D results.} We start our presentation of three-dimensional results with the case of scalar Helmholtz transmission problems featuring one interface of material discontinuity given by the doubly periodic grating surface $x_3=f(x_1,x_2)=\frac{1}{2}\cos(2 \pi x_1)\cos(2 \pi x_2)$.  For all the numerical experiments presented here, a single patch was used to represent the doubly periodic surfaces. We illustrate in Table~\ref{errors3} the high-order convergence achieved by our DDM solvers in the case of transmission problems involving Wood frequencies in both semi-infinite domains. For the grating considered, under normal incidence, the first three Wood frequencies occur at $2\pi$, $2\sqrt{2}\pi$, and $4\pi$ respectively.

\begin{table}
\begin{center}
\begin{tabular}{|c|c|c|c|c|c|}
\hline
$k_0$ & $k_1$ & $A$ & $\varepsilon_{en}$ & $\varepsilon_1$ \\
\hline
$2 \pi$ & $2\sqrt{2}\pi$ & 20 & 4.5 $\times$ $10^{-3}$& 1.5 $\times$ $10^{-3}$ \\
$2 \pi$ & $2\sqrt{2}\pi$ & 30 & 1.5 $\times$ $10^{-3}$& 3.1 $\times$ $10^{-4}$ \\
$2 \pi$ & $2\sqrt{2}\pi$ & 40 & 2.4 $\times$ $10^{-5}$& 2.1 $\times$ $10^{-5}$ \\
\hline
\hline
$2 \pi$ & $4\pi$ & 20 & 7.4 $\times$ $10^{-3}$& 3.4 $\times$ $10^{-3}$ \\
$2 \pi$ & $4\pi$ & 30 & 5.7 $\times$ $10^{-4}$& 3.1 $\times$ $10^{-4}$ \\
$2 \pi$ & $4\pi$ & 40 & 4.5 $\times$ $10^{-5}$& 3.6 $\times$ $10^{-5}$ \\
\hline
\end{tabular}
\caption{\label{errors3}  Convergence of the DDM transmission solver in the case of one interface of material discontinuity $\Gamma_0$ given by the grating profile $x_3=\frac{1}{2}\cos(2 \pi x_1)\cos(2 \pi x_2)$, normal incidence, and various wavenumbers that are both Wood frequencies. In both cases we used shifted quasiperiodic Green functions $G^{q,3,A}_{k_\ell,h}$ with $h=1.4$ and $M=1024$. The reference solutions were computed using $A=100$ with corresponding $\varepsilon_{en}$ of the order $10^{-6}$.}
\end{center}
\end{table}

We continue in Table~\ref{errors4} with an illustration of the accuracy achieved by our DDM solvers in the case of three layers separated by two doubly periodic gratings and wavenumber configurations that involve Wood frequencies. Finally, we conclude with an illustration in Table~\ref{comp83d} of the ability of the Schur complement DDM solvers to handle very large numbers of layers in three dimensions that require large discretizations; for instance, the largest problem considered in Table~\ref{comp83d} involves a periodic layered configuration consisting of 80 doubly periodic interfaces of material discontinuity, spanning about 160 wavelengths, whose DDM discretization required 163840 unknowns.  \notesps{We mention that the condition numbers of the matrices $\mathcal{D}_{j,M}$ (see Section~\ref{nystrom}) that need be inverted in the Schur complement solution of the problems presented in Table~\ref{comp83d} belong to the interval $[10^2,3.3\times 10^{3}]$}.

\begin{table}
\begin{center}
\begin{tabular}{|c|c|c|c|c|c|c|}
\hline
$k_0$ & $k_1$ & $k_2$ & $A$ & $\varepsilon_{en}$ & $\varepsilon_1$ \\
\hline
$1$ & $2$& $2\pi$ (W) & 20 & 1.2 $\times$ $10^{-1}$ & 5.0 $\times$ $10^{-2}$ \\
$1$ & $2$& $2\pi$ (W) & 40 & 2.7 $\times$ $10^{-3}$ & 1.7 $\times$ $10^{-2}$ \\
$1$ & $2$& $2\pi$  (W) & 60 & 4.2 $\times$ $10^{-4}$ & 6.4 $\times$ $10^{-4}$ \\
\hline
\hline
$1$ & $2\pi$ (W) & $2$ & 20 & 1.1 $\times$ $10^{-1}$ & 7.3 $\times$ $10^{-2}$ \\
$1$ & $2\pi$ (W) & $2$ & 40 & 7.4 $\times$ $10^{-3}$ & 8.7 $\times$ $10^{-3}$ \\
$1$ & $2\pi$ (W) & $2$  & 60 & 1.4 $\times$ $10^{-3}$& 6.4 $\times$ $10^{-4}$ \\
\hline
\hline
$2 \pi$ (W) & $2\sqrt{2}\pi$ (W) & $4\pi$ (W) & 20 & 5.3 $\times$ $10^{-2}$& 2.5 $\times$ $10^{-2}$ \\
$2 \pi$ (W) & $2\sqrt{2}\pi$ (W) & $4\pi$  (W) & 40 & 7.7 $\times$ $10^{-3}$& 5.7 $\times$ $10^{-3}$ \\
$2 \pi$ (W) & $2\sqrt{2}\pi$ (W) & $4\pi$ (W) & 60 & 1.8 $\times$ $10^{-3}$& 6.3 $\times$ $10^{-4}$ \\
\hline
\end{tabular}
\caption{\label{errors4}  Convergence of the DDM transmission solver in the case of three layers ($N=2$) separated by grating profiles grating profile $F_0(x_1,x_2)=\frac{1}{2}\cos(2 \pi x_1)\cos(2 \pi x_2)$ and $F_1(x_1,x_2)=F_0(x_1,x_2)-1.3$ under normal incidence. In both cases we used shifted quasiperiodic Green functions $G^{q,3,A}_{k_0,h}$ and $G^{q,3,A}_{k_1,-h}$ with $h=1.4$ and $M=1024$. The reference solutions were computed using $A=100$ with corresponding $\varepsilon_{en}$ of the order $10^{-6}$.}
\end{center}
\end{table}

\begin{table}
   \begin{center}
\begin{tabular}{|c|c|c|c|}
\hline
\multicolumn{2}{|c|} {$G_{k_\ell}^{q,A}, N=19$} & \multicolumn{2}{|c|} {$G_{k_\ell}^{q,A}, N=79$} \\
\cline{1-4}
A & $\varepsilon_{en}$ & A & $\varepsilon_{en}$ \\
\hline
40 & 2.1 $\times$ $10^{-2}$  & 40 & 1.1 $\times$ $10^{-1}$\\
60 & 3.9 $\times$ $10^{-3}$ & 60 & 2.8 $\times$ $10^{-2}$\\
\hline
\end{tabular}
\caption{Convergence of the DDM transmission solver for configuration consisting of $N+2$ layers for various values of $N$, where the interfaces $\Gamma_\ell,0\leq \ell\leq N$ are given by grating profiles $F_\ell(x_1,x_2)=-\ell L+1/2\cos(2\pi x_1)\cos(2\pi x_2), L=1.3, 0\leq \ell\leq N$, under normal incidence, with $k_\ell$ drawn randomly from the interval $[1,25]$, and discretization size $M=1032$  resulting in discrete DDM linear systems with $40960$ unknowns in the case $N=19$  and respectively $163840$ unknowns in the case $N=79$. The ensuing DDM systems are solved via the Schur complement elimination procedure.\label{comp83d}}
\end{center}
\end{table}

While the grating profiles considered in this paper are relatively simple, qualitatively similar results can be obtained for more geometrically complex profiles. We mention that extensions to grating profiles that involve corners is straightforward in two dimensions; graded meshes and weighted versions of RtR maps are needed to treat those cases~\cite{jerez2017multitrace}. Extensions to three-dimensional gratings with edges and corners can be done by applying existing technology~\cite{jerez2017multitrace}.  The results presented in this section were produced on a MacBookPro with a 2.7 GHz Intel processor and 8Gb of RAM based on a MATLAB implementation of the DDM algorithm. We did not strive to optimize the code in order to harness the best sequential computational performance; this is certainly possible, and it has been done in~\cite{bruno2016superalgebraically,bruno2017shifted} via fast methods based on equivalent sources. Instead, we wanted to illustrate that, within a DDM approach, the use of windowed Green function method combined with shifted Green functions leads to a computational method for scattering by periodic layered media that is accurate and robust at all frequencies, including the challenging Wood frequencies.
  
\section{Conclusions}\label{conclu}

We presented analysis and numerical experiments concerning  boundary-integral operators-based DDM for two and three-dimensional periodic layered media scalar scattering problems. We have shown that the RtR maps that are needed by DDM can be computed in a robust manner at all frequencies, including Wood frequencies. The Wood frequencies configurations were treated via boundary-integral operators that incorporate shifted quasi-periodic Green functions that converge at Wood frequencies. The tridiagonal DDM linear system associated with transmission problems in periodic layered media was solved via recursive Schur complements resulting in a computational cost that is linear in the number of layers. Extensions to full three-dimensional electromagnetic configurations are straightforward. We are currently investigating the design of DDM with quasi-optimal transmission conditions for the solution of transmission problems in periodic layered media.

\section*{Acknowledgments}
Stephen Shipman acknowledges support from NSF through contract DMS-0807325. Catalin Turc acknowledges support from NSF through contract DMS-1614270. Stephanos Venakides acknowledges support from NSF through contract DMS-1211638.

\section{Appendix}\label{wp_proof}

\begin{theorem}\label{thm:uniqueness1}
  Under the assumptions that (1) the wavenumbers $k_j$ are such that $0\leq k_j\leq k_{j+1}$ for all $0\leq j\leq N$ and (2) the coefficients $\gamma_j=1$ for all $0\leq j\leq N+1$, the system of Helmholtz equations~\eqref{system_t} has a unique solution when the functions $F_j$ are $C^2$.
\end{theorem}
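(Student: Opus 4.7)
The plan is to consider the homogeneous version of~\eqref{system_t} (take $u^\text{inc}\equiv 0$) and show that the only quasi-periodic solution satisfying the radiation conditions~\eqref{eq:rad_up}--\eqref{eq:rad_down} is $u_j\equiv 0$ for all $j$; linearity then yields uniqueness.  I will follow a standard energy~$+$~Rellich identity~$+$~unique-continuation strategy, globalized through the monotonicity hypothesis.

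First, multiplying $\Delta u_j+k_j^2u_j=0$ by $\overline{u_j}$, integrating over each $\Omega_j^{per}$ (truncating the semi-infinite top and bottom at $x_2=\pm h$), and summing over~$j$ makes the interior-interface boundary terms cancel by continuity of $u$ and $\partial_n u$ (which holds since $\gamma_j=1$), the lateral sides cancel by $\alpha$-quasi-periodicity, and the Rayleigh expansions contribute the caps.  Letting $h\to\infty$ one obtains
\[
\sum_{j=0}^{N+1}\int_{\Omega_j^{per}}(|\nabla u_j|^2-k_j^2|u_j|^2)\,dx \;=\; id\sum_{\beta_{0,r}>0}\beta_{0,r}|C_r^+|^2+id\sum_{\beta_{N+1,r}>0}\beta_{N+1,r}|C_r^-|^2.
\]
The left-hand side is real, so every propagating Rayleigh coefficient vanishes, and $u_0$ (resp.\ $u_{N+1}$) is purely evanescent for $x_2>\max F_0$ (resp.\ $x_2<\min F_N$).

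Next I test Helmholtz against $\partial_{x_2}\overline{u_j}$, take real parts, and apply the resulting Rellich identity $\operatorname{Re}\int_{\partial D}[(\partial_n u)\,\partial_{x_2}\overline{u}-\tfrac{n_2}{2}|\nabla u|^2+\tfrac{k^2n_2}{2}|u|^2]\,ds=0$ in each truncated layer, summing over~$j$.  A direct Parseval calculation shows that the top and bottom caps contribute exactly zero now, because for each evanescent mode $\mu_r^2-\alpha_r^2+k_0^2=0$; lateral sides again cancel by quasi-periodicity.  Using continuity of $u$, $\partial_n u$, $\partial_\tau u$ and $|\nabla u|$ on each interior $\Gamma_j$, the two-sided contributions collapse to $\int_{\Gamma_j}\tfrac{n_{j,2}}{2}(k_j^2-k_{j+1}^2)|u|^2\,ds$.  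Each $\Gamma_j$ being the graph of a $C^2$ function forces $n_{j,2}<0$ strictly, and together with $k_j^2-k_{j+1}^2\le 0$ this makes every summand pointwise non-negative.  Their sum being zero forces each one to vanish; after merging any adjacent layers with equal wavenumbers (legitimate because the transmission conditions together with $C^2$ smoothness of $\Gamma_j$ make the joint solution a standard Helmholtz solution across such an interface), I may assume strict monotonicity and conclude $u\equiv 0$ on every $\Gamma_j$.

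To finish, I apply the Rellich identity to $u_0$ alone in $\Omega_0^{per,h}$: the cap $\Gamma_{0,h}$ contributes zero by evanescence, and on $\Gamma_0$ the condition $u_0=0$ (hence also $\partial_\tau u_0=0$) reduces the integrand to $\tfrac{n_{0,2}}{2}|\partial_n u_0|^2$; since $n_{0,2}<0$ strictly, this forces $\partial_n u_0\equiv 0$ on $\Gamma_0$.  With both Cauchy data vanishing, Holmgren's theorem (as in Theorem~\ref{wp_Omega_0}) yields $u_0\equiv 0$ in $\Omega_0^{per}$.  I then descend inductively: given $u_j\equiv 0$, the transmission conditions on $\Gamma_j$ produce $u_{j+1}|_{\Gamma_j}=0$ and $\partial_n u_{j+1}|_{\Gamma_j}=0$, whence Holmgren gives $u_{j+1}\equiv 0$.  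Iterating from $j=0$ up to $j=N$ completes the proof.  The crux of the argument is the Rellich step: the monotonicity hypothesis is essential precisely because it fixes the sign of $(k_j^2-k_{j+1}^2)n_{j,2}$ on every interface, turning a sum of boundary contributions into a sum of non-negative terms; without it, cancellations between adjacent interfaces cannot be excluded and $u|_{\Gamma_j}=0$ would no longer follow.  A secondary technical issue is ensuring that the Rellich caps vanish identically for the evanescent Rayleigh tails, which is a clean Parseval calculation generically and requires a small additional argument if Wood modes ($\beta_r=0$) appear in the radiation expansion.
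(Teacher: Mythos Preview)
Your proposal is correct and follows essentially the same strategy as the paper: an energy identity kills the propagating Rayleigh modes, then a Rellich-type identity (which the paper obtains equivalently by applying Green's third identity to the pair $u_j$ and $\overline{\partial_{x_2}u_j}$ together with the jump relation $\partial_{n_j}v_j+\partial_{n_{j+1}}v_{j+1}=(k_{j+1}^2-k_j^2)n_{j,x_2}u_j$) produces the signed sum $\sum_j(k_{j+1}^2-k_j^2)\int_{\Gamma_j}n_{j,x_2}|u_j|^2\,ds=0$, and monotonicity plus $n_{j,x_2}<0$ forces $u=0$ on the interfaces; Holmgren and the transmission conditions then cascade through the layers. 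Your treatment is slightly more careful in two places---you explicitly merge layers with equal wavenumbers (the paper tacitly assumes strict monotonicity when it reads off $u_0|_{\Gamma_0}=0$), and you verify the vanishing of the Rellich cap contributions for evanescent and Wood modes---but the core argument is the same.
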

\begin{proof}
  Clearly, the uniqueness of solutions amounts to showing that when the incident field is zero, the only solution of the transmission equations~\eqref{system_t} is the trivial solution. The key ingredient in the proof is the application of Green's identities. Let us choose $h>\max{F_0}$ and define the domain $\Omega_{0,h}^{per}:=\{(x_1,x_2)\in\Omega_0^{per}: F_0(x_1)\leq x_2\leq h\}$. A simple application of Green's identities leads to 
\begin{equation*}
  \int_{\Omega_{0,h}^{per}}(|\nabla u_0|^2-k_0^2|u_0|^2)dx=\int_{\Gamma_0}\partial_{n_0}u_0\ \overline{u_0}\ ds + \int_{\Gamma_{0,h}}\partial_{x_2}u_0\ \overline{u_0}\ dx_1
\end{equation*}
where $\Gamma_{0,h}:=\{(x_1,x_2): 0\leq x_1\leq d,\ x_2=h\}$. Note that the integrals over the vertical lines vanish due to the quasi-periodicity of the field. Taking into account the fact that $u_0$ is radiating, we can express $u_0$ on the line segment $\Gamma_{0,h}$ in terms of the following Rayleigh series
\[
u_0(x_1,h)=\sum_{r\in\mathbb{Z}} C_r^{+}e^{i\alpha_r x_1+i\beta_{0,r} h}
\]
 from which it follows immediately that
\[
\lim_{h\to\infty}\int_{\Gamma_{0,h}}\partial_{x_2}u_0\ \overline{u_0}\ dx_1=id\sum_{r\in\mathbb{Z},\ \beta_{0,r}>0}\beta_{0,r}|C_r^{+}|^2.
\]
Hence, we get
\begin{equation*}
  \int_{\Omega_{0}^{per}}(|\nabla u_0|^2-k_0^2|u_0|^2)dx=\int_{\Gamma_0}\partial_{n_0}u_0\ \overline{u_0}\ ds + id\sum_{r\in\mathbb{Z},\ \beta_{0,r}>0}\beta_{0,r}|C_r^{+}|^2.
\end{equation*}
Taking the imaginary part of the equation above we arrive at
\begin{equation}\label{eq:first_identity}
  \Im\int_{\Gamma_0}(\partial_{n_0}u_0)\ \overline{u_0}\ ds =-d\sum_{r\in\mathbb{Z},\ \beta_{0,r}>0}\beta_{0,r}|C_r^{+}|^2.
\end{equation}
On the other hand, application of the Green identities in the layers $\Omega_j^{per},1\leq j\leq N$ that have a finite width in the $x_2$ leads to
\begin{equation*}
  \int_{\Omega_{j}^{per}}(|\nabla u_j|^2-k_j^2|u_j|^2)dx=\int_{\Gamma_{j-1}}\partial_{n_j}u_j\ \overline{u_j}\ ds + \int_{\Gamma_{j}}\partial_{n_j}u_j\ \overline{u_j}\ ds.
\end{equation*}
Taking the imaginary part in the equation above we obtain
\begin{equation}\label{eq:mid_layer_j}
  \Im\int_{\Gamma_{j-1}}(\partial_{n_j}u_j)\ \overline{u_j}\ ds = -\Im \int_{\Gamma_{j}}(\partial_{n_j}u_j)\ \overline{u_j}\ ds.
\end{equation}
Applying the same arguments that led to the derivation of equation~\eqref{eq:first_identity} in the case of the semi-infinite layer $\Omega_{N+1}$ we obtain
\begin{equation}\label{eq:last_identity}
  \Im \int_{\Gamma_{N}}(\partial_{n_{N+1}}u_{N+1})\ \overline{u_{N+1}}\ ds =-d\sum_{r\in\mathbb{Z},\ \beta_{N+1,r}>0}\beta_{N+1,r}|C_r^{-}|^2.
\end{equation}
Adding the left-hand sides as well as the right  hand sides of equations~\eqref{eq:first_identity},~\eqref{eq:mid_layer_j}, and~\eqref{eq:last_identity} and taking into account the continuity conditions in the transmission system~\eqref{system_t} we obtain
\[
\sum_{r\in\mathbb{Z},\ \beta_{0,r}>0}\beta_{0,r}|C_r^{+}|^2+ \sum_{r\in\mathbb{Z},\ \beta_{N+1,r}>0}\beta_{N+1,r}|C_r^{-}|^2=0.
\]
The last equation implies that the Rayleigh coefficients of the propagating modes corresponding to $u_0$ and $u_{N+1}$ are all equal to zero, that is  $C_r^{+}=0$ for all $r$ such that $\beta_{0,r}>0$ as well as $C_r^{-}=0$ for all $r$ such that $\beta_{N+1,r}>0$. We have then
\[
u_0(x_1,h)=\sum_{r\in\mathbb{Z},\beta_{0,r}=0} C_r^{+}e^{i\alpha_r x_1+i\beta_{0,r} h} + \sum_{r\in\mathbb{Z},\Im{\beta_{0,r}}>0} C_r^{+}e^{i\alpha_r x_1+i\beta_{0,r} h}.
\]
We define $v_0:=\partial_{x_2}u_0$ in the domain $\Omega_0$ and we apply Green's third identity to the functions $u_0$ and $\overline{v_0}$ in the domain $\Omega_{0,h}^{per}$ and take $h\to\infty$ to obtain
\begin{equation}\label{eq:1}
\int_{\Gamma_0}\partial_{n_0}u_0\ \overline{v_0}\ ds=\int_{\Gamma_0}u_0\ \partial_{n_0}\overline{v_0}\ ds.
\end{equation}
We similarly define $v_j:=\partial_{x_2}u_j$ in the domains $\Omega_j$ for $1\leq j\leq N$ and we obtain in a similar manner
\begin{equation}\label{eq:2}
  \int_{\Gamma_{j-1}} \partial_{n_j}u_j\ \overline{v_j}\ ds+\int_{\Gamma_j}\partial_{n_j}u_j\ \overline{v_j}\ ds=\int_{\Gamma_{j-1}} u_j\ \partial_{n_j}\overline{v_j}\ ds+\int_{\Gamma_j}u_j\ \partial_{n_j}\overline{v_j}\ ds,\ 1\leq j\leq N
\end{equation}
as well as
\begin{equation}\label{eq:3}
\int_{\Gamma_{N+1}}\partial_{n_{N+1}}u_{N+1}\ \overline{v_{N+1}}\ ds=\int_{\Gamma_{N+1}} u_{N+1}\ \partial_{n_{N+1}}\overline{v_{N+1}}\ ds.
\end{equation}
Now, using the continuity of the normal derivatives and the tangential derivatives of the fields $u_j$ across interfaces $\Gamma_j$, it follows immediately that the quantities $v_j$ are continuous across the interfaces $\Gamma_j$. Also, as shown in~\cite{Petit}, we have that
\begin{equation}\label{eq:jump_n}
\partial_{n_j}v_j+\partial_{n_{j+1}}v_{j+1}=(k_{j+1}^2-k_j^2)n_{j,x_2}u_j\quad {\rm on}\ \Gamma_j, 1\leq j\leq N,
\end{equation}
where $n_{j,x_2}$ denotes the component of the normal $n_j$ along the $x_2$ axis. Taking these last two facts into account, we add the left-hand sides and right hand sides of equations~\eqref{eq:1},~\eqref{eq:2}, and~\eqref{eq:3} and we get
\begin{equation}\label{eq:4}
  \sum_{j=0}^N\int_{\Gamma_j}(\partial_{n_j}u_j+\partial_{n_{j+1}}u_{j+1})\overline{v_j}\ ds=\sum_{j=0}^N\int_{\Gamma_j}u_j(\overline{\partial_{n_j}v_j} + \overline{\partial_{n_{j+1}}v_{j+1}})\ ds
\end{equation}
Now, given that $\partial_{n_j}u_j+\partial_{n_{j+1}}u_{j+1}=0$ on $\Gamma_j$ for $0\leq j\leq N$, and taking into account the continuity condition in equation~\eqref{eq:jump_n}, we obtain
\begin{equation}\label{eq:central_identity}
\sum_{j=0}^N(k_{j+1}^2-k_j^2)\int_{\Gamma_j}n_{j,x_2}|u_j|^2ds=0.
\end{equation}
Now, given the assumption (1) and the fact the normals $n_j$ are chosen to point to the exterior of the domains $\Omega_j$ and hence $n_{j,x_2}<0$ for all $0\leq j\leq N$, it follows in particular that $u_0=0$ on $\Gamma_0$. In the light of this fact, we revisit formula~\eqref{eq:1} and we get that
\[
\int_{\Gamma_0}\partial_{n_0}u_0\ \overline{v_0}\ ds =0.
\]
Given that $u_0=0$ on $\Gamma_0$, it follows that its tangential derivative is also equal to zero on $\Gamma_0$. Denoting by $w_0:=\partial_{x_1}u_0$ in $\Omega_0$, the latter fact translates into $n_{0,x_2}w_0=n_{0,x_1}v_0$ on $\Gamma_0$. Since $\partial_{n_0}u_0\ \overline{v_0}=n_{0,x_1}w_0\ \overline{v_0}+n_{0,x_2}|v_0|^2=n_{0,x_2}|w_0|^2+n_{0,x_2}|v_0|^2$, we get 
\[
\int_{\Gamma_0}n_{0,x_2}(|w_0|^2+|v_0|^2)ds=0.
\]
Hence, $u_0=0,\ \partial_{n_0}u_0=0$ on $\Gamma_0$, which implies that $u_0=0$ in $\Omega_0$ by Holmgren's theorem. Now the use of continuity conditions across interfaces $\Gamma_j$ for $1\leq j\leq N$ and Holmgren's theorem leads to the conclusion of the theorem.
\end{proof}

\section{Appendix}\label{proof_Wood}
We devote this Appendix to proving the following result.

{\bf Theorem 3.4} \emph{
  Under the assumption that $F_0$ is $C^2$ and that $k_0$ is a Wood frequency, the operator
  \[
  \mathcal{A}_{0,h}:=\frac{1}{2}I+(K_{\Gamma_0,k_0,h}^{q,j})^\top-Z_0S_{\Gamma_0,k_0,h}^{q,j},\ j\geq 1, \mathcal{A}_{0,h}:L^2_{per}(\Gamma_0)\to l^2_{per}(\Gamma_0)
  \]
  is invertible with continuous inverse for all but a discrete set of values of the shift $h>0$.
}

\begin{proof}
  Since for a given $\mathbf{y}\in\Gamma_0$, the kernels $G_{k_0,h}^{q,j}(\cdot-\mathbf{y})$ have the same singularity on $\Gamma_0$ (that is at $\mathbf{x}=\mathbf{y}$) as the kernels $G_{k_0}^q(\cdot-\mathbf{y})$, then both operators $(K_{\Gamma_0,k_0,h}^{q,j})^\top:L^2_{per}(\Gamma_0)\to L^2_{per}(\Gamma_0)$ and $S_{\Gamma_0,k_0,h}^{q,j}:L^2_{per}(\Gamma_0)\to L^2_{per}(\Gamma_0)$ are compact. Thus, the conclusion of the Theorem follows once we establish the injectivity of the operator $\mathcal{A}_{0,h}$. Let $\varphi\in Ker(\mathcal{A}_{0,h})$ and define
  \[
w_0:=SL^{q,j}_{k_0,h}\varphi\quad{\rm in}\ \mathbb{R}^2\setminus\Gamma_0.
\]
The function $w_0$ is a radiating $\alpha$-quasi-periodic solution of the Helmholtz equation in the domain $\Omega_0$ with zero Robin boundary values on $\Gamma_0$, that is
\[
\partial_{n_0}w_0-Z_0w_0=0\quad{\rm on}\quad\Gamma_0.
\]
It follows from Theorem~\ref{wp_Omega_0} that $w_0=0$ in $\Omega_0$. It is straightforward to see that the shifted function $G_{k_0,h}^{q,j}$ has the following frequency domain representation~\cite{Delourme}:
\[
G_{k_0,h}^{q,j}(x_1,x_2;y_1,y_2)=\frac{i}{2d}\sum_{r\notin U} e^{i\alpha_r(x_1-y_1)}\frac{(1-e^{i\beta_r h})^j}{\beta_r}e^{i\beta_r|x_2-y_2|}+\sum_{r\in U} c_r e^{i\alpha_r(x_1-y_1)}, 
\]
with $(x_1,x_2)\in\Omega_0,\ (y_1,y_2)\in\Omega_0$ where $U:=\{r\in\mathbb{Z}: \beta_r=0\}$; clearly $U$ is not empty since $k_0$ is a Wood frequency. Hence, the solution $w_0$ admits the following representation
\[
w_0(x_1,x_2)=\sum_{r}w_{0,r}^{+}(x_2)e^{i\alpha_r x_1},\ x_2>\max{F_0}
\]
where
\[
w_{0,r}^{+}(x_2)=\begin{cases}e^{i\beta_r x_2}\frac{(1-e^{i\beta_r h})^j}{\beta_r}b_r^{+}& r\notin U\\
c_rb_r^{+}& r\in U\end{cases}
\]
with
\[
b_r^{+}=\frac{i}{2d}\int_{\Gamma_0}e^{-i\alpha_ry_1}e^{-i\beta_ry_2}\varphi(y_1,y_2)ds(y_1,y_2).
\]
Assuming that the shift $h$ is chosen such that $1-e^{i\beta_r h}\neq 0$ for all $r\in U$, it follows immediately that $w_0=0$ in $\Omega_0$ implies that all the coefficients $b_r^{+}=0$ for all $r$.

The key insight in the proof~\cite{bruno2017three} is to introduce the following $\alpha$-quasi-periodic Green function that is defined even at Wood frequencies
\[
B^q(x_1,x_2)=\frac{i}{2d}\sum_{r\notin U}e^{i\alpha_r x_1}\frac{e^{i\beta_r|x_2|}}{\beta_r}+\frac{i}{2d}\sum_{r\in U}e^{i\alpha_r x_1}i|x_2|.
\]
The function $B^q$ is not an outgoing Green function on account of the linear term $|x_2|$. Now define
\[
v(\mathbf{x}):=\int_{\Gamma_0}B^q(\mathbf{x}-\mathbf{y})\varphi(\mathbf{y})ds(\mathbf{y}),\ \mathbf{x}\notin\Gamma_0.
\]
Then the function $v$ admits the representation
\[
v(x_1,x_2)=\sum_{r}v_{r}^{+}(x_2)e^{i\alpha_r x_1},\ x_2>\max{F_0}
\]
where
\[
v_{r}^{+}(x_2)=\begin{cases}\frac{e^{i\beta_r x_2}}{\beta_r}b_r^{+}& r\notin U\\
ix_2b_r^{+}-ib_r'& r\in U\end{cases}
\]
with
\[
  b_r'=\frac{i}{2d}\int_{\Gamma_0}e^{-i\alpha_ry_1}y_2\varphi(y_1,y_2)ds(y_1,y_2),\ r\in U.
\]
Given that we established the fact that $b_r^{+}=0$ for all $r$, we obtain $v(x_1,x_2)=\sum_{r\in U}(-ib_r')e^{-i\alpha_rx_1}$ for $x_2>\max{F_0}$. Certainly, $v(\mathbf{x})$ is a solution of the Helmholtz equation for $\mathbf{x}\notin\Gamma_0$, and $v(\mathbf{x})$ is independent of $x_2$ for $x_2>\max{F_0}$. But $v(\mathbf{x})$ is real analytic for $\mathbf{x}\notin\Gamma_0$, so it follows from analytic continuation that $v(\mathbf{x})$ is actually independent of $x_2$ everywhere in $\Omega_0$. Thus, we have
\[
v(x_1,x_2)=\sum_{r\in U}(-ib_r')e^{-i\alpha_rx_1}\quad{\rm for}\ x_2\geq F_0(x_1).
\]
We study next the behavior of the function $v(x_1,x_2)$ for $x_2<\min{F_0}$. We get immediately that $v$ also admits the representation
\[
v(x_1,x_2)=\sum_{r}v_{r}^{-}(x_2)e^{i\alpha_r x_1},\ x_2<\min{F_0}
\]
where
\[
v_{r}^{-}(x_2)=\begin{cases}\frac{e^{-i\beta_r x_2}}{\beta_r}b_r^{-}& r\notin U\\
-ix_2b_r^{-}+ib_r'& r\in U\end{cases}
\]
with
\[
b_r^{-}=\frac{i}{2d}\int_{\Gamma_0}e^{-i\alpha_ry_1}e^{i\beta_ry_2}\varphi(y_1,y_2)ds(y_1,y_2),\ r\in\mathbb{Z}.
\]
Comparing the definitions of the coefficients $b_r^{+}$ and $b_r^{-}$ we see that they differ in general for $r\notin U$. However, and most importantly, we have that $b_r^{-}=b_{r}^{+}$ for $r\in U$ since $\beta_r=0$ for $r\in U$. In conclusion, we have
\[
v_{r}^{-}(x_2)=\begin{cases}\frac{e^{-i\beta_r x_2}}{\beta_r}b_r^{-}& r\notin U\\
ib_r'& r\in U.\end{cases}
\]
The very last fact we established implies that $v$ is actually a radiating $\alpha$-quasi-periodic solution of the Helmholtz equation in the domain $\Omega_0^{-}:\{(x_1,x_2):x_2\leq F_0(x_1)\}$. In the last step of the proof we define
\[
\tilde{v}(x_1,x_2):=v(x_1,x_2)-\sum_{r\in U}(-ib_r')e^{-i\alpha_rx_1},\quad{\rm for} (x_1,x_2)\in\mathbb{R}^2
\]
which is a $\alpha$-quasi-periodic solution of the Helmholtz equation that satisfies the radiation condition as $x_2\to\infty$ as well as $x_2\to-\infty$, while vanishing in $\Omega_0$. Clearly, $\tilde{v}$ is a radiating $\alpha$-quasi-periodic solution of the Helmholtz equation in the domain $\Omega_0^{-}$ that vanishes on $\Gamma_0$. Consequently, $\tilde{v}=0$ in $\Omega_0$~\cite{Petit}. Using the jump conditions of the normal derivative of the normal derivative of the single-layer potentials, we get that $\varphi=0$ on $\Gamma_0$ which concludes the proof of the theorem. 
  \end{proof}
  
\bibliography{biblioLayer}

\end{document}